\DeclareRobustCommand{\em}{%
	\@nomath\em \if b\expandafter\@car\f@series\@nil
	\normalfont \else \slshape \fi}
\newcommand{\spaceplease}{\needspace{5\baselineskip}}
\newcommand{\SkAlg}{\catf{SkAlg}}
\newcommand{\Sk}{\catf{Sk}}
\newcommand{\cA}{\cat{A}}
\newcommand{\cO}{\mathcal{O}}
\newcommand{\SURF}{\catf{SURF}}
\newcommand{\moduli}{\mathfrak{MF}}
\tikzstyle{tikzfig}=[baseline=-0.25em,scale=0.5]
\tikzstyle{none}=[inner sep=0mm]
\tikzstyle{every loop}=[]
\tikzstyle{black dot}=[fill=black, draw=black, shape=circle, minimum size=3pt, inner sep=0pt]
\tikzstyle{black dot small}=[fill=black, draw=black, shape=circle, minimum size=2pt, inner sep=0pt]
\tikzstyle{fblack dot}=[fill=black, draw=red, shape=circle, minimum size=2pt, inner sep=0pt]
\tikzstyle{wbox}=[fill=white, draw=black, shape=rectangle, minimum height=0.5cm, minimum width=0.01cm]
\tikzstyle{bbox}=[fill=white, draw=blue, shape=rectangle, minimum height=0.5cm, minimum width=0.01cm]
\tikzstyle{rbox}=[fill=white, draw=red, shape=rectangle, minimum height=0.5cm, minimum width=0.01cm]
\tikzstyle{bwbox}=[draw=blue, shape=rectangle, minimum width=2cm, minimum height=0.5cm]
\tikzstyle{bbwbox}=[draw=blue, shape=rectangle, minimum width=1cm, minimum height=1cm]
\tikzstyle{big white circle}=[fill=white, draw=black, shape=circle, minimum width=0.75cm]
\tikzstyle{white dot big}=[fill=white, draw=black, shape=circle, inner sep=1pt]
\tikzstyle{white dot}=[fill=white, draw=black, shape=circle, minimum size=3pt, inner sep=0pt]
\tikzstyle{flat box}=[fill=white, draw=black, shape=rectangle, minimum width=1.3cm, minimum height=0.5cm,fill=morphismcolor]
\tikzstyle{square}=[fill=white, draw=black, shape=rectangle]
\tikzstyle{flat box 2}=[fill=white, draw=black, shape=rectangle, minimum height=0.5cm, minimum width=0.01cm,fill=morphismcolor]
\tikzstyle{bigbox}=[fill=white, draw=black, shape=rectangle, minimum height=0.5cm, minimum width=0.8cm,fill=morphismcolor]
\tikzstyle{over }=[front]
\tikzstyle{theta}=[fill=blue, draw=blue, shape=ellipse, minimum height=6pt, minimum width=6pt, inner sep=0pt]
\tikzstyle{thetabig}=[fill=blue, draw=blue, shape=ellipse, minimum width=1cm, minimum height=0.01cm]
\tikzstyle{thetainv}=[fill=blue, draw=red, shape=ellipse, minimum height=6pt, minimum width=6pt, inner sep=0pt]
\tikzstyle{thetabinv}=[fill=blue, draw=red, shape=ellipse, minimum width=1cm, minimum height=0.01cm]
\tikzstyle{bigdisk}=[draw=black, shape=circle, minimum width=3cm]
\tikzstyle{wdisk}=[shape=circle, minimum width=0.48cm,fill=white]
\tikzstyle{bigdisk2}=[draw=black, fill=lightgray, shape=circle, minimum width=3cm]
\tikzstyle{little disk}=[fill=white, draw=black, shape=circle, minimum width=0.5cm]
\tikzstyle{mid arrow}=[-, postaction={on each segment={mid arrow}}]
\tikzstyle{end arrow}=[->]
\tikzstyle{mover}=[-, link]
\tikzstyle{string}=[-, draw=blue,postaction={on each segment={mid arrow}}]
\tikzstyle{stringd}=[-, dotted,draw=blue,postaction={on each segment={mid arrow}}]
\tikzstyle{red}=[-, dotted,draw=red]
\tikzstyle{mydots}=[-,dotted]
\tikzstyle{open}=[-, line width=1pt,draw=blue,postaction={on each segment={mid arrow}}]
\tikzstyle{thick}=[-,line width=1pt]
\tikzstyle{rarrow}=[->,draw=red]
\tikzstyle{red mid arrow}=[-, draw={rgb,255: red,214; green,42; blue,51}, postaction={on each segment={mid arrow}}, line width=1pt]
\tikzstyle{RED}=[-, draw={rgb,255: red,214; green,42; blue,51}]
\tikzstyle{darrow}=[->,dotted]
\tikzstyle{blue}=[-, draw=blue]
\tikzstyle{blue mid arrow}=[-, draw={rgb,255: red,23; green,37; blue,167}, postaction={on each segment={mid arrow}}, line width=1pt]
\tikzstyle{bluedashed}=[-,dotted, draw=blue]
\tikzstyle{over}=[-, link]
\tikzstyle{mover}=[-, link]
\tikzstyle{mapsto}=[{|->}]
\tikzset{
	on each segment/.style={
		decorate,
		decoration={
			show path construction,
			moveto code={},
			lineto code={
				\path [#1]
				(\tikzinputsegmentfirst) -- (\tikzinputsegmentlast);
			},
			curveto code={
				\path [#1] (\tikzinputsegmentfirst)
				.. controls
				(\tikzinputsegmentsupporta) and (\tikzinputsegmentsupportb)
				..
				(\tikzinputsegmentlast);
			},
			closepath code={
				\path [#1]
				(\tikzinputsegmentfirst) -- (\tikzinputsegmentlast);
			},
		},
	},
	mid arrow/.style={postaction={decorate,decoration={
				markings,
				mark=at position .7 with {\arrow[#1]{stealth}}
	}}},
}
\tikzset{%
	link/.style    = { white, double = black, line width = 1.8pt,
		double distance = 0.4pt },
	channel/.style = { white, double = black, line width = 0.8pt,
		double distance = 0.8pt },
}
\tikzstyle{tikzfig}=[baseline=-0.25em,scale=0.5]
\tikzstyle{none}=[inner sep=0mm]
\tikzstyle{every loop}=[]
\newtheoremstyle{mytheorem}
{\topsep}
{\topsep}
{\slshape}
{0pt}
{\bfseries}
{.}
{ }
{\thmname{#1}\thmnumber{ #2}\thmnote{ {\normalfont\slshape(#3)}}}
\newtheoremstyle{mydefinition}
{\topsep}
{\topsep}
{\normalfont}
{0pt}
{\bfseries}
{.}
{ }
{\thmname{#1}\thmnumber{ #2}\thmnote{ {\normalfont\slshape(#3)}}}
\theoremstyle{mytheorem}
\newtheorem{theorem}{Theorem}[section]
\newtheorem*{rep@theorem}{\rep@title}
\newcommand{\newreptheorem}[2]{%
	\newenvironment{rep#1}[1]{%
		\def\rep@title{#2 \ref{##1}}%
		\begin{rep@theorem}}%
		{\end{rep@theorem}}}
\newtheorem{lemma}[theorem]{Lemma}
\newtheorem{proposition}[theorem]{Proposition}
\newtheorem{corollary}[theorem]{Corollary}
\theoremstyle{mydefinition}\makeatletter
\newtheorem*{repx@theorem}{\repx@title}
\newcommand{\newrepxtheorem}[2]{%
	\newenvironment{repx#1}[1]{%
		\def\repx@title{#2 \ref{##1}}%
		\begin{repx@theorem}}%
		{\end{repx@theorem}}}
\newtheorem{definition}[theorem]{Definition}\newrepxtheorem{definition}{Definition}
\newenvironment{example}
{\pushQED{\qed}\exx}
{\popQED\endexx}
\newenvironment{remark}
{\pushQED{\qed}\remm}
{\popQED\endremm}
\numberwithin{equation}{section}
\newenvironment{pnum}{\begin{enumerate}[topsep=2pt,parsep=2pt,partopsep=2pt,itemsep=0pt,label={(\roman{*})}]}{\end{enumerate}}
\DeclareMathSymbol{\Phiit}{\mathalpha}{letters}{"08}\let\Phi\undefined\newcommand{\Phi}{\Phiit}
\DeclareMathSymbol{\Psiit}{\mathalpha}{letters}{"09}\let\Psi\undefined\newcommand{\Psi}{\Psiit}
\DeclareMathSymbol{\Sigmait}{\mathalpha}{letters}{"06}\let\Sigma\undefined\newcommand{\Sigma}{\Sigmait}
\DeclareMathSymbol{\Xiit}{\mathalpha}{letters}{"04}
\DeclareMathSymbol{\Lambdait}{\mathalpha}{letters}{"03}\let\Lambda\undefined\newcommand{\Lambda}{\Lambdait}
\DeclareMathSymbol{\Piit}{\mathalpha}{letters}{"05}\let\Pi\undefined\newcommand{\Pi}{\Piit}
\DeclareMathSymbol{\Gammait}{\mathalpha}{letters}{"00}\let\Gamma\undefined\newcommand{\Gamma}{\Gammait}
\DeclareMathSymbol{\Omegait}{\mathalpha}{letters}{"0A}\let\Omega\undefined\newcommand{\Omega}{\Omegait}
\DeclareMathSymbol{\Upsilonit}{\mathalpha}{letters}{"07}\let\Upsilon\undefined\newcommand{\Upsilon}{\Upilonit}
\DeclareMathSymbol{\Thetait}{\mathalpha}{letters}{"02}\let\Theta\undefined\newcommand{\Theta}{\Thetait}
\def\Hom{\mathrm{Hom}}
\def\End{\catf{End}}
\def\id{\mathrm{id}}
\def\SL{\operatorname{SL}}
\def\k{\mathbf{k}}
\newcommand{\Vmod}{V\! \text{-}\catf{mod}}
\let\to\undefined\newcommand{\to}{\longrightarrow}
\let\mapsto\undefined\newcommand{\mapsto}{\longmapsto}
\newcommand{\catf}[1]{\mathsf{#1}}
\newcommand{\Map}{\catf{Map}}
\newcommand{\Diff}{\catf{Diff}}
\newcommand{\MapA}{\catf{Map}_{\! \cat{A}}}
\def\op{\mathrm{op}}
\newcommand{\framed}{\catf{f}E_2}
\newcommand{\ra}[1]{\xrightarrow{\   #1    \ }}
\def\Grpd{\catf{Grpd}}
\newcommand{\Lexf}{\catf{Lex}^\catf{f}}
\newcommand{\Rexf}{\catf{Rex}^\catf{f}}
\newcommand{\Rex}{\catf{Rex}}
\def\Cat{\catf{Cat}}
\newcommand{\ExtSurf}{\catf{Ext}^\circ\catf{(Surf)}}
\newcommand{\Gr}{\catf{Gr}}
\newcommand{\FA}{\mathfrak{F}_{\! \cat{A}}}
\newcommand{\hocolimsub}[1]{\underset{#1}{\operatorname{hocolim}}\,}
\newcommand{\smc}{\cat{S}}
\newcommand{\Fin}{\catf{Lex}^\catf{f}}
\newcommand{\vect}{\catf{vect}}
\newcommand{\spr}[1]{\left\langle #1\right\rangle}
\newcommand{\cat}[1]{\mathcal{#1}}
\newcommand{\Ext}{\catf{Ext}^\circ}
\newcommand{\Hbdy}{\catf{Hbdy}}
\newcommand{\Surf}{\catf{Surf}}
\newcommand{\Graphs}{\catf{Graphs}}
\newcommand{\Legs}{\catf{Legs}}
\newcommand{\Add}{\catf{Add}}
\newcommand{\RForests}{\catf{RForests}}
\newcommand{\Forests}{\catf{Forests}}
\newcommand{\Aut}{\catf{Aut}}
\newcommand{\SurfA}{\Surf_{\! \cat{A}}}
\newcommand{\SurfB}{\Surf_{\! \cat{B}}}
\newcommand{\SurfAB}{\Surf_{\! \cat{A},\cat{B}}}
\newcommand{\PhiA}{\Phi_{\! \cat{A}}}
\newcommand{\ha}[1]{\xhookrightarrow{ \ #1 \ }}
\newcommand{\MF}{\catf{MF}}
\newcommand{\opint}{[0,1]}
\definecolor{Blue}  {rgb} {0.282352,0.239215,0.803921}
\definecolor{Green} {rgb} {0.133333,0.545098,0.133333}
\definecolor{Red}   {rgb} {0.803921,0.000000,0.000000}
\definecolor{Violet}{rgb} {0.580392,0.000000,0.827450}
\renewcommand{\today}{\ifcase \month \or January\or February\or March\or %
	April\or May \or June\or July\or August\or September\or October\or November\or %
	December\fi {} \number  \year} 
\newcommand{\monthyeardate}{%
	\DTMenglishmonthname{\@dtm@month}, \@dtm@year
}
\newtheorem*{theorem*}{Theorem}
\newtheorem*{corollary*}{Corollary}
\renewcommand\section{\@startsection {section}{1}{\z@}%
	{-3.5ex \@plus -1ex \@minus -.2ex}%
	{2.3ex \@plus.2ex}%
	{\normalfont\scshape\centering}}
\titleformat{\subsection}[runin]
{\normalfont\bfseries}
{\thesubsection}
{0.5em}
{}
[.]
\begin{document}

			\begin{center}	\textbf{\large{A Classification of Modular Functors via Factorization Homology}}\\	\vspace{1cm}	{\large Adrien Brochier $^{a}$} \ {\large and} \ \ {\large Lukas Woike $^{b}$}\\ 	\vspace{5mm}{\slshape $^a$ Institut de Mathématiques de Jussieu-Paris Rive Gauche \\ UMR 7586 \\ Universit\'e Paris Cit\'e, \\ Sorbonne Universit\'e, CNRS  
					\\ F-75013 Paris\\ France}	\\[7pt]	{\slshape $^b$ Université Bourgogne Europe\\ CNRS\\ IMB UMR 5584\\ F-21000 Dijon\\ France  }
\end{center}	\vspace{0.3cm}	
\begin{abstract}\noindent Modular functors are traditionally defined as systems of projective representations of mapping class groups of surfaces that are compatible with gluing. They can formally be described as modular algebras over certain extensions of the modular surface operad, with the values of the algebra lying in a suitable symmetric monoidal $(2,1)$-category $\mathcal{S}$ of linear categories. In this paper, we prove that modular functors in $\mathcal{S}$ are equivalent to self-dual balanced braided algebras $\mathcal{A}$ in $\mathcal{S}$ (a categorification of the notion of a commutative Frobenius algebra) for which a condition formulated in terms of factorization homology with coefficients in $\mathcal{A}$ is satisfied; we call such $\mathcal{A}$ connected. The equivalence in one direction is afforded by genus zero restriction. Our construction of the inverse equivalence is entirely topological and can be thought of as a far reaching generalization of the construction of modular functors from skein theory. In order to verify the connectedness condition in practice, we prove that it can be reduced to a single condition in genus one. Moreover, we show that cofactorizability of $\mathcal{A}$, a condition known to be satisfied for modular categories, is sufficient. Therefore, we recover in particular Lyubashenko's construction of a modular functor from a (not necessarily semisimple) modular category and show that it is determined by its genus zero part. Additionally, we exhibit modular functors that do not come from modular categories and outline applications to the theory of vertex operator algebras.	\end{abstract}

	\tableofcontents
	\normalsize

	\spaceplease
	\section{Introduction and summary}
	A \emph{modular functor}, in its simplest form, is a system of projective representations
	of mapping class groups of surfaces on	 vector spaces, the so-called \emph{spaces of conformal blocks}~\cite{Segal,ms89,turaev,tillmann,baki}.
	These are
	subject to compatibility conditions with respect to the gluing of surfaces.  
The notion of a modular functor grew out of attempts to axiomatically capture (aspects of)
the mathematical structure
 underlying conformal field theory and has, over the last three decades, developed into one of the key concepts
 at the cross-roads of low-dimensional topology, representation theory and mathematical physics. 
	More specifically, the reasons for the interest in modular functors include the following:
	
	\begin{itemize}
		\item Modular functors provide a rich source for highly 
		non-trivial and explicitly computable mapping class group representations. 
		A landmark result is the asymptotic faithfulness 
		of the mapping class group representations built from certain quantum groups 
		established by Andersen~\cite{andersenfaithful}
		and Freedman-Walker-Wang~\cite{fww} by different methods. 
		
		\item Modular functors are closely related to three-dimensional topological
		 field theory. In fact, the Reshe\-tikhin-Turaev construction \cite{rt1,rt2}
		provides in particular a class of modular functors. The input datum for this construction is a \emph{semisimple 	modular category} (also called \emph{modular fusion category}), a certain type of finitely semisimple braided tensor category 
		introduced by Moore-Seiberg~\cite{ms89,mooreseiberg} and Turaev~\cite{turaevmod,turaev}
		that can 
		be seen as a categorical version of \emph{modular data}  
		(however, the notion of a modular category is strictly finer \cite{mignardschauenburg}).
		Modular categories can for instance be obtained by taking modules over suitable quantum groups \cite{turaev,baki,egno,kassel} or vertex operator algebras~\cite{Huang2008}. 
		But
		 modular functors exist under far more general assumptions than three-dimensional topological field theories. Most importantly, they can also be built from possibly \emph{non-semisimple} modular categories as demonstrated  by Lyubashenko \cite{lyubacmp,lyu} while the construction of the three-dimensional topological field theory requires semisimplicity by \cite{BDSPV15} (see however \cite{gai} for a partially defined topological field theory using modified traces). Modular functors also provide a topological perspective on various homological invariants associated with these non-semisimple categories~\cite{svea2,dmf,mwdiff}.   
		
		\item From the perspective of mathematical physics, modular functors 
		form one of the key mathematical structures governing two-dimensional
		conformal field theory. For example, the famous \emph{Verlinde formula}~\cite{verlinde,mose88,cardy,witten,turaev}, i.e.\ the diagonalization of the fusion rules through the so-called \emph{$S$-matrix}, is ultimately a statement about the orbit of the fusion multiplication on the
		space of conformal blocks of the torus  under the action of the mapping class group $\SL(2,\mathbb{Z})$ of the torus. Another reason for the interest in modular functors is the fact that they provide a direct and conceptual access to the so-called \emph{correlators} of a conformal field theory~\cite{frs1,frs2,frs3,frs4,jfcs}, certain elements in the spaces of conformal blocks invariant under the mapping class group actions	 and gluing. 
		
		\end{itemize}

	The main result of this article is a classification of 
	modular functors.	To motivate our result, recall the classical theorem that two-dimensional topological field theories in the sense of Atiyah~\cite{atiyah} are classified by commutative Frobenius algebras, i.e.\ finite-dimensional commutative algebras equipped with a non-degenerate invariant pairing~\cite{abrams,kocktft}.
	 In particular, every ordinary two-dimensional topological field theory yields by evaluation on the circle and genus zero surfaces
	 a commutative Frobenius algebra. 
	 In the higher categorical framework, two-dimensional topological field theories and, more generally, modular functors
	 with values in a symmetric monoidal $(2,1)$-category $\smc$, yield by evaluation on the circle and genus zero surfaces a \emph{cyclic} algebra over the so-called \emph{framed $E_2$-operad}. 
	 Those are explicitly characterized in~\cite{cyclic}. In the familiar case where $\cat S$ is some $(2,1)$-category of linear categories, cyclic framed $E_2$-algebras
	  coincide with balanced braided monoidal categories equipped with an appropriate analog of a non-degenerate pairing. 
	  Simply put, this paper is about identifying necessary and sufficient conditions on such a cyclic framed $E_2$-algebra to produce a modular functor and the question of whether such an extension to a modular functor is unique. 
	  
	Most previously known examples of modular functors arise as the restriction to dimension two
	  of an (at least partially defined) three-dimensional topological field theory which, in turn, is constructed 
	  from a braided monoidal category satisfying strong finiteness and duality
	  conditions~\cite{rt2, baki,kl,gai}. 
	  In fact, the modular functor is often the main ingredient in these constructions, and in all of those cases coincides with the one which was defined by Lyubashenko~\cite{lyubacmp,lyu,lyulex,kl}. It is a common theme in the study of topological field theories
	   that the extension to the top dimension (here this would mean
	   producing numerical invariants of three-dimensional
	   manifolds) is the step that requires the strongest  conditions. If one is only interested in constructing modular functors, it is natural to expect that these conditions can be relaxed greatly.
	   Solving the classification problem for modular functors 
	  amounts to precisely understanding these conditions.

\subsection{Statement of the main result}
In order to state precisely
the main results of this article,
let us discuss the definition 
of a modular functor 
in more detail using the language 
of operads: Recall that an operad, as introduced by Boardman, Vogt and May \cite{bv68,mayoperad,bv73}, describes an (algebraic) structure by giving separately the needed nullary, unary, binary --- more generally, $n$-ary operations (operations with $n$ inputs and one output)
 and a prescription for how to compose them. The strength of this description lies in the fact that the $n$-ary operations do not necessarily have to form a set, but can form for instance 
 a space, a groupoid
  or a chain complex. This allows for the description of highly intricate algebraic structures.
An important family of operads,
that in fact motivated to a large extent the invention of the concept in \cite{bv68,mayoperad,bv73},
 are the \emph{operads $E_r$ of little disks of dimension $r\ge 1$}.
These are
 topological operads whose space
  of operations of arity $n$ is given by the space of embeddings of $n$ many $r$-dimensional
 disks into one $r$-dimensional 
 disk that are composed of translations and rescalings. If one additionally allows rotations, one obtains the 
\emph{framed $E_r$-operads}.

Cyclic operads, as defined by 
Getzler and Kapranov~\cite{gk}, are operads that come with a prescription how to cyclically permute the inputs of operations
 with the output, which essentially amounts to consistently erasing the 
  distinction between inputs and the output altogether.
 Modular operads \cite{gkmod} come additionally with a self-composition for operations that can also be thought of as a trace.
 
   The prototypical example of a modular operad is the groupoid-valued
   modular operad $\Surf$ of compact oriented surfaces and their mapping classes or, for short, the modular surface operad. 
     The groupoid of arity $n$ operations has connected compact oriented surfaces with $n+1$ boundary components as objects (here $n+1$ has to be read as $n$ inputs plus one output)
 and mapping classes between these surfaces as morphisms, see Section~\ref{cyclicandmodularalgebras} for details.
  Crucially, the genus zero part of $\Surf$ is equivalent to the operad $\framed$ of framed little disks. In particular, $\framed$ has a natural cyclic structure which is not entirely obvious from its original definition.
 
 Given a modular operad $\cat{O}$, one can consider modular algebras over it. These algebras take values in a suitable (higher) symmetric monoidal category $\smc$. If $\cat{O}$ is groupoid-valued, then it is natural to choose $\smc$ as a symmetric monoidal $(2,1)$-category because it is enriched over groupoids,
 and this is in fact the only case that will be relevant for the treatment of modular functors in this article. More concretely, one should imagine $\smc$ as a suitable choice of symmetric monoidal $(2,1)$-category of $\k$-linear categories, where $\k$ is a fixed algebraically closed field;
 an example is the symmetric monoidal $(2,1)$-category $\Rex$ of finitely cocomplete linear categories and finitely cocontinuous functors, equipped with the Deligne-Kelly monoidal product~\cite{Franco2013}.
 For an object $X\in \smc$, a non-degenerate symmetric pairing $\kappa : X \otimes X \to I$ (here $I$ is the unit of $\smc$) turns the endomorphism operad of $X$ into a modular operad. This modular operad is denoted by $\End_\kappa^X$. 
 The structure of a modular $\cat{O}$-algebra on $(X,\kappa)$ is then by definition
  a map $\cat{O}\to \End_\kappa^X$ of modular operads. 
 This is essentially the original definition of Getzler and Kapranov, but we have to take into account that, since we are considering bicategorical operadic algebras, it is crucial to relax all of the familiar axioms for modular operads and their algebras up to coherent isomorphism. 
 This is done in \cite{cyclic}, and we give a concise summary in Section~\ref{cyclicandmodularalgebras}.
 
 Equipped with the theory of modular operads and their algebras, one might tentatively define a modular functor with values in $\smc$ as a 
 modular algebra over $\Surf$. While in principal this seems like a reasonable definition, experience tells us that we should in fact also allow  extensions of mapping class groups. This very important point in the definition
 of a modular functor takes into account the well-known fact that the naturally occurring mapping class group representations built from representation categories tend to be \emph{projective representations} because of the so-called \emph{framing anomaly} \cite{atiyahframing}, see \cite[Section~3]{gilmermasbaum} for a detailed discussion. 
 In order to incorporate this,
  we use the following definition:
 A \emph{modular functor with values in a symmetric monoidal $(2,1)$-category $\smc$} is 
 a modular algebra over 
 an extension of $\Surf$ as a modular operad (see Section~\ref{secexthbdy} for what is meant by extension here). We emphasize that the extension is \emph{part of the data}; it can a priori be arbitrary and will only be subject to the consistency conditions
 that we give in Definition~\ref{defextension} and~\ref{deflambdaequiv}. 
 
As already mentioned, 
 the present article is concerned with an obvious problem in the study of modular functors, namely the \emph{classification of modular functors}. 
 More specifically, the present article offers a classification
	of modular functors in terms of so-called \emph{self-dual balanced braided algebras} 
	satisfying a condition formulated in terms of factorization homology. Let us first recall the following definition from~\cite{cyclic}:

	 \begin{repxdefinition}{defsda}[$\text{\cite[Definition~5.4]{cyclic}}$]
	 A \emph{self-dual balanced braided algebra} in  $\smc$ is an object $\cat{A} \in \smc$
	 together with the following structure: \begin{itemize}
	 	\item $\cat{A}$ is a \emph{balanced braided algebra} in $\smc$ --- in more detail: 
	 	\begin{itemize}
	 		\item 
	 		$\cat{A}$ has a multiplication $\mu : \cat{A} \otimes \cat{A} \to \cat{A}$ that is associative and unital up to coherent isomorphism
	 		(we denote the unit  1-morphism  by $u:I\to \cat{A}$);
	 		\item $\cat{A}$ comes with an isomorphism $c: \mu \to \mu^\op = \mu \circ \tau$ (where $\tau$ is the symmetric braiding of $\smc$) called \emph{braiding} which satisfies the hexagon relations;
	 		\item $\cat{A}$ comes with an isomorphism $\theta : \id_\cat{A} \to \id_\cat{A}$ referred to as \emph{balancing} satisfying
	 		\begin{align}
	 			\theta \circ \mu  &= c^2 \circ \mu(\theta  \otimes \theta) \ , \\
	 			\theta \circ u &= \id_u \ . 
	 		\end{align}
	 	\end{itemize}
	 	\item $\cat{A}$ comes with a non-degenerate pairing
	 	$\kappa : \cat{A} \otimes \cat{A} \to I$
	 	(a 1-morphism
	 	exhibiting $\cat{A}$ as its own dual in the homotopy category of $\smc$)
	 	together with an isomorphism $\gamma : \kappa(u\otimes \mu)\to \kappa$ subject to the following conditions:  \begin{itemize}\item The isomorphism
	 		$\kappa(u\otimes \mu(u,-))\longrightarrow \kappa(u,-)$ coming from  
	 		$\gamma$ coincides with the isomorphism coming from the unit constraint $\mu(u,-)\cong \id_\cat{A}$.
	 		\item The equality $\kappa(\theta \otimes \id_\cat{A})=\kappa(\id_\cat{A} \otimes \theta)$ holds.
	 	\end{itemize}
	 \end{itemize}
	 \end{repxdefinition}
	 
	 Self-dual balanced braided algebras in $\smc$ are equivalent to cyclic framed $E_2$-algebras in $\smc$ \cite{cyclic}.
	 It is generally not true that such a structure already extends to a modular functor. Rather, a cyclic framed $E_2$-algebra
	 uniquely extends to an \emph{ansular functor} \cite{mwansular}, a consistent system of representations of mapping class groups of handlebodies
	 (throughout assumed to be compact and oriented) with parametrized disks on their boundaries. The main result of this paper gives, roughly speaking, a necessary and sufficient condition for a given framed cyclic $E_2$-algebra to extend to a modular functor, and shows that
	 it does so
	 in a unique way, up to a contractible choice.

The main ingredient 
for understanding this necessary and sufficient condition is \emph{factorization homology} \cite{higheralgebra,andrade,AF},
a type of homology theory
with origins in~\cite{bdca} that allows us to integrate $E_n$-algebras 
in a higher symmetric monoidal category $\smc$
over  $n$-dimensional manifolds. We will throughout be interested in oriented manifolds. Therefore, we will need \emph{framed} $E_n$-algebras as coefficients (the terminology is very unfortunate here). For factorization homology 
to be well-defined, the symmetric monoidal category $\smc$, in which our framed $E_n$-algebras take their values,
needs to satisfy some rather mild technical assumptions given in Section~\ref{secprefh}, and we restrict throughout to those $\smc$ meeting the requirements
 (these conditions
  will be satisfied in the cases of interest such as $\smc=\Rex$). 
Let us briefly mention 
 the philosophy, formulated for the two-dimensional situation that is relevant for us: Factorization homology with coefficients in a framed $E_2$-algebra $\cat{A}$ in $\smc$ evaluated on a surface $\Sigma$ is an object in $\smc$ denoted by $\int_\Sigma \cat{A}$. The functor $\Sigma \mapsto \int_\Sigma \cat{A}$ is characterized through a local-to-global principle, i.e.\ an analog of the Eilenberg-Steenrod axioms: If $\Sigma$ is a disk, then $\int_{\mathbb{D}^2}\cat{A}$ is just $\cat{A}$. For a more complicated surface, $\int_\Sigma \cat{A}$ can be computed via \emph{excision}. When $\smc$ is a suitable symmetric monoidal 
$(2,1)$-category of linear categories,
$\int_\Sigma \cat{A}$ may, in physical terms, be interpreted as a refinement of the category of modules
over the algebra of observables of $\Sigma$~\cite{cg-fa}.
We give a recollection of factorization homology in Section~\ref{secprefh}.

Suppose  that
$\cat{A}$ is a 
self-dual balanced braided algebra according to Definition~\ref{defsda}. The first bullet point of the definition tells us exactly that $\cat{A}$ is a framed $E_2$-algebra in $\smc$ by the description of non-cyclic framed $E_2$-algebras in \cite{WahlThesis,salvatorewahl}. For this reason, we may consider factorization homology $\int_\Sigma \cat{A}$ for a compact oriented surface $\Sigma$ with $n$ boundary components. We prove in Proposition~\ref{propenvfh} that the universal property of factorization homology implies that for any three-dimensional handlebody $H$ 
  with boundary $\Sigma$ 
  the ansular functor of $\cat{A}$ gives rise to a 1-morphism
  \begin{align}\label{eq:phihx}
	\PhiA(H):\int_\Sigma \cat{A}\to \cat{A}^{\otimes n} 
	\end{align}
that can be interpreted as a \emph{generalized skein module for $H$}, see Section~\ref{secskeintheorysub} of this introduction and Section~\ref{secskeinalgmod} for this viewpoint.
	A crucial property of factorization homology is that it is \emph{canonically pointed}: There exists a canonical map 
\begin{align} \cat{O}_\Sigma^\cat{A} :
I \to \int_\Sigma \cat{A}
\end{align}
induced by the embedding of the empty manifold into $\Sigma$, and we show that the value on $H$ of the ansular functor associated with $\cat{A}$ is given by the composition
\begin{equation}\label{eq:distans}
	I \ra{\cat{O}_\Sigma^\cat{A}} \int_\Sigma \cat{A} \ra{\PhiA(H)} \cat{A}^{\otimes n}.
\end{equation}
The construction of 
the map~\eqref{eq:phihx} is given in Section~\ref{modenvfhsec}. 
In fact, a more refined statement can be made: It is a standard fact that the object $\int_{\partial \Sigma \times \opint } \cat{A}$ is an algebra in $\smc$ that acts on both $\int_\Sigma \cat{A}$ and $\cat{A}^{\otimes n}$. With respect to
these module structures, $\PhiA(H)$ comes canonically with the structure of a module map
(Proposition~\ref{propmodulemap}). We may now define $\Omega_\cat{A}(\Sigma)$ 
as the replete full subgroupoid (i.e.\ full and closed under isomorphisms) of the category of module maps $\int_\Sigma \cat{A}\to \cat{A}^{\otimes n}$
	spanned by the maps $\PhiA(H)$ with $H$ running over all handlebodies with boundary $\Sigma$. 
		We call
		a cyclic framed $E_2$-algebra $\cat{A}$ or, equivalently,
		a self-dual balanced braided 
		algebra in $\smc$
		\emph{connected}
		if the groupoid $\Omega_\cat{A}(\Sigma)$ 
		is connected for any surface $\Sigma$ (Definition~\ref{defconnected}).

Although this connectedness condition is fairly abstract, we prove in  Proposition~\ref{propconnected} that it can be verified in a concrete way:
Let $H,H'$ be two handlebodies
with an embedded disk
 whose boundary surface is the torus $\mathbb{T}_1^2$ with one boundary circle (when taking the boundary surface, the embedded disk is converted to a boundary circle) whose
	mapping class groups generate $\Map(\mathbb{T}_1^2)\cong B_3$, the braid group on three strands. All mapping class groups are taken relative to the embedded disk or the boundary, respectively, see Section~\ref{secprefh}. 
	Then $\cA$ is connected if and only if $\PhiA(H)$ and $\PhiA(H')$ are isomorphic as $\int_{\mathbb{S}^1\times [0,1]} \cat{A}$-module maps.
		This reduction to genus one is of course a familiar feature of the examples of modular functors constructed from modular categories. There, roughly speaking,
		 one has a natural candidate for such an isomorphism
		 (in the semisimple case, this is the so-called $S$-matrix), and the category at hand is modular if and only if this map is indeed invertible. Note however that in those examples, as explained below, this actually follows from the fact that modular categories are in particular cofactorizable, which turns out to be a \emph{genus zero} condition. This seems to be a particular feature that  makes modular categories remarkable.

	We are now in a position to state our main result, namely the classification of modular functors. Instead of just classifying them up to equivalence, 
	we will in fact describe the \emph{moduli space} $\moduli$ of modular functors 
	 whose detailed construction is given in Definition~\ref{defmoduli}.

	\begin{reptheorem}{thmproofclassmf}[Classification of modular functors]
		The moduli space $\moduli$
		of modular functors
		with values in the symmetric monoidal $(2,1)$-category $\smc$ is equivalent to 
		the 2-groupoid of connected
		self-dual balanced braided algebras $\cat{A}$
		in $\smc$. 
		This equivalence is afforded by restricting modular functors to 
		surfaces of genus zero.
	\end{reptheorem}
	
	 For a given connected
	 self-dual balanced braided algebra $\cat{A}$,
	 the modular functor
	 corresponding to $\cat{A}$ under the above equivalence is denoted by $\mathfrak{F}_\cat{A}$ and can be explicitly described, see Section~\ref{secmodularfunctorsconstruction}. In
	 Section~\ref{secapp}, we focus on the linear case: We give a formula for all spaces of conformal blocks (Corollary~\ref{corconfblocks}) and conclude that they are always finite-dimensional provided that $\cat{A}$ is a finite linear category in the sense of Etingof-Ostrik~\cite{etingofostrik}.

	Having stated the main result, let now us now expand 
	on further results, that are either needed for the proof of the main result or arise as a consequence, and elaborate on the relation to existing modular functor constructions.
	
\subsection{Uniqueness of extensions to a modular functor}	An intermediate result of independent interest
	appearing in the proof of the main result 
	 is the following uniqueness result for extensions to modular functors:

	\begin{reptheorem}{uniquenessthm}[Uniqueness of extensions]
	For any modular functor defined in genus zero, i.e.\ a cyclic framed $E_2$-algebra $\cat{A}$
	in $\smc$, the space of extensions of $\cat{A}$ to a modular functor
		is empty or contractible. In other words,
		if there is an extension of $\cat{A}$ to a modular functor, this extension is essentially unique, i.e.\ unique up to a contractible choice.
	\end{reptheorem}
	
	This uniqueness result can be seen as a generalization of a similar result of Andersen-Ueno~\cite{andersenueno}
	who consider 
	 modular functors that are built from finitely semisimple categories equipped with an involution.  
	As a small caveat, we should note that even if we restrict to a symmetric monoidal $(2,1)$-category of finitely semisimple linear categories, our definition of a modular functor
	substantially differs from the one used in~\cite{andersenueno}. Instead, our notion
	 is closer to
	the one of Tillmann~\cite{tillmann} and contains as a special case the one of Turaev~\cite{turaev} and Bakalov-Kirillov~\cite{baki}.

	\subsection{Projective representations and equivalences}
	The construction of the moduli space $\moduli$ of modular functors 
	provides in particular a convenient definition of equivalences of modular functors. 
	By definition a modular functor 
	leads to a coherent system of representations of certain extensions of mapping class groups of surfaces. 
	We allow for extensions subject only to rather mild conditions, and define $\moduli$ in such a way that if, roughly speaking,
	two modular functors  produce systems of 
	representations of possibly different extensions
	such that one factors coherently through the other, we regard them to be equivalent. 
General equivalences are given by zigzags of those kind of maps. 

For a fixed connected cyclic framed $E_2$-algebra $\cA$, the particular extension $\SurfA$ of $\Surf$ that enters the construction of the modular functor $\mathfrak F_\cA$ associated to $\cat{A}$ plays a distinguished role: For any other extension $\cat{Q}$, the modular $\cat{Q}$-algebra structures on $\cA$ are in one-to-one correspondence with genuine maps of extensions $\cat{Q}\to \SurfA$ (Theorem~\ref{thmuniversal}). Beware that this does not mean that $\SurfA$ is somehow minimal: For example, if $\cA$ can actually be given the structure of a $\Surf$-algebra, leading to genuine representations of mapping class groups, it does not mean that $\SurfA=\Surf$, but rather that there is a map of extensions $\Surf\to \SurfA$, i.e.\ a section. Different sections could a priori lead to non-equivalent $\Surf$-structures on $\cA$ which are still identified in $\moduli$.
In other words, even if the projective representations at hand can be lifted to genuine representations, the correct notion of equivalence is still that of projective equivalence.

	 The origin of the  projectiveness of the mapping class group representations is perhaps best illustrated in the  special case where $\cA$ is a modular category and $\Sigma$ is closed: For a handlebody $H$ with boundary $\Sigma$, 
	the connectedness condition guarantees that the functors $	\PhiA(H) :	\int_\Sigma \cat{A} \to \vect$
	for different choices of $H$ are isomorphic, but non-canonically.
 In that case, we also have 
\begin{align}
\Aut(\PhiA(H))\cong \Aut(\id_\vect)\cong \k^\times \ .
\end{align}
	There is a natural action of the mapping class group on $\int_\Sigma \cat{A}$ which happens to be (again, non-canonically) trivial in that particular situation, so that it can be transported to a trivial action of $\Map(\Sigma)$ on $\vect$. In the categorical setting, the action being trivial means that each group element acts by the identity functor and that the composition is controlled by a $\k^\times$-valued 2-cocycle $\eta$ on $\Map(\Sigma)$. Having fixed $H$, changing the trivialization of the $\Map(\Sigma)$-action leads to an equivalent trivial action, i.e.\ one defined by a cocycle in the same cohomology class as $\eta$. The vector space $\PhiA(H)\cat{O}_\Sigma^\cat{A}$ then carries a canonical structure of a homotopy fixed point for this action. This is well-known to be the same as a representation 
	of the central extension determined by $\eta$
	on that space, thereby producing
	  a projective representation of $\Map(\Sigma)$ on $\PhiA(H)\cat{O}_\Sigma^\cat{A}$. Those are precisely the representations that the modular functors $\mathfrak F_\cA$ produces in that special case.

	 \subsection{Cofactorizability as a sufficient condition}
	 The condition for a self-dual balanced braided algebra
	 to be connected is relatively transparent from a topological perspective, but somewhat abstract from an algebraic perspective. 
	 Therefore, a large part of the article is devoted to providing instead 
	 sufficient conditions that can be verified more easily in concrete situations.
	 Most importantly, we prove 
	 that \emph{cofactorizability} in the sense of~\cite{bjss} is a sufficient condition
	 that can be verified in genus zero:

	 \begin{reptheorem}{thmcofactorizable}
	 	For any cofactorizable cyclic framed $E_2$-algebra, there is an essentially unique extension to a modular functor. More precisely, we have
	 	an embedding of 2-groupoids
	 	\begin{align}
	 		\{\text{cofactorizable self-dual balanced braided algebras in $\smc$}\}	\ha{ \ \ \ }    \moduli  \ . 
	 	\end{align}
	 \end{reptheorem}

For general $\smc$, this embedding is not essentially surjective
(Remark~\ref{remcofnotesssurj}).

\subsection{Recovering the Lyubashenko construction}
	We have the following relevant special case
	of Theorem~\ref{thmcofactorizable}: If $\smc=\Rex$ and if the self-dual balanced braided algebra $\cat{A}$
	 is actually a finite ribbon category
	 (not every self-dual balanced braided algebra in $\Rex$ is of this form, see the next subsection and~\cite{mwansular}), then cofactorizability is equivalent to the non-degeneracy of the braiding of $\cat{A}$, which means that $\cat{A}$ is a modular category (this definition of modularity does \emph{not} include semisimplicity). 	
	 In this case, the modular functor afforded by Theorem~\ref{thmproofclassmf} agrees with Lyubashenko's modular functor~\cite{lyubacmp} (Corollary~\ref{cormodular}).
	 This provides a purely topological construction 
	 of this well-known class of modular functors 
	 via factorization homology.  
	 The new insight about this construction is its universality: For any modular category, Lyubashenko's construction yields
	 the essentially \emph{unique} extension to a modular functor.
	 This is a conceptually appealing description of Lyubashenko's construction that, to the best of our knowledge, is new. As a concrete consequence, this implies that the ribbon automorphisms of a modular category (as 2-group)
	 act on the associated modular functor.
	 The comparison to Lyubashenko's construction is fairly abstract, 
	 but it can be made explicit 
	 for ribbon categories using the results of~\cite{bzbj}, 
	 thereby recovering 
	 Lyubashenko's mapping class group representations~\cite{lyubacmp} (including the notorious $S$-transformation) and also their description in~\cite{gai2}
	  in a direct way that does not appeal to the uniqueness result. We plan
	 to  address this
	  in a forthcoming paper.

	 \subsection{Modular functors that do not come from modular categories}

	  By combining our results with those of Allen-Lentner-Schweigert-Wood~\cite{alsw}, we give in Corollary~\ref{corvoax} a criterion for a vertex operator algebra $V$ to produce a modular functor.
	  This comes with a prescription to obtain spaces of conformal blocks for $V$. These spaces of conformal
	  blocks are \emph{universal}: They are up to equivalence the only ones extending the `obvious'
	  hom space genus zero blocks to a modular functor. 
	 This construction produces modular functors 
	   that do not come from modular categories, for example the Feigin-Fuchs boson (Example~\ref{exfeiginfuchs}). 
	   As another class of not necessarily modular categories that yield modular functors, we discuss Drinfeld centers of (not necessarily spherical) pivotal finite tensor categories (Corollary~\ref{corzentrum}) building on the algebraic results of \cite{mwcenter}.
	   
	   \subsection{Relation to skein theory}\label{secskeintheorysub}
	   Our main result  is largely inspired by and specializes to the skein-theoretic approach to modular functors~\cite{roberts,masbaumroberts}. To illustrate this, let $\cat{A}$ be the modular fusion category obtained by taking the semisimplifcation of the representation category of quantum $\text{SL}_2$ at a root of unity. Let $\Sigma$ be a closed surface and $H$ a handlebody  with $\partial H=\Sigma$. In that case:
\begin{itemize}
	\item The vector space $\PhiA(H) \cat{O}_\Sigma^\cat{A}$ is identified with the $\cat{A}$-skein module $\Sk_\cat{A} (H)$ of $H$, a quotient of the famous Kauffman bracket skein module of $H$ evaluated at that root of unity.\footnote{There is a minor sign issue which is fixed by picking the non-standard ribbon structure described in~\cite{Tingley}.}
	\item The category $\int_\Sigma \cA$ is identified with the category of modules over $\End_{\int_\Sigma \cA}(\cO_\Sigma)$, which is in turn identified with the skein algebra $\SkAlg_\cat{A}(\Sigma)$ of $\Sigma$ by \cite{cooke}. 
	\item Since $\PhiA(H)$ is an equivalence in that case
	(a result that will be proven in subsequent work; we do not logically depend on it in this article and just mention it for illustration),
	 this recovers the fact proven in~\cite{roberts,masbaumroberts} that there is an algebra isomorphism
		\begin{align}
\SkAlg_\cat{A}(\Sigma) \cong \End(\Sk_\cat{A}(H))\ .
	\end{align}
	\item 		In other words, $\PhiA(H)$ factors as
	\begin{align}
\int_\Sigma \cA \simeq \End(\Sk_\cat{A}(H))\catf{-mod} \simeq \vect
		\end{align}
		where the second equivalence is the standard Morita triviality of matrix algebras. Hence, the whole functor $\PhiA(H)$  can  be identified with the skein module of $H$, this time seen as a $\k$-$\SkAlg_\cat{A}(\Sigma)$ bimodule.
	\item The statement that $\cA$ is connected in that case translates into the fact that for any other handlebody $H'$ with $\partial H'=\Sigma$, the skein modules of $H$ and $H'$ are isomorphic not just as vector spaces but as modules over $\SkAlg_\cat{A}(\Sigma)$. 
	\item The mapping class group action on $\int_{\Sigma} \cA$ is induced by the natural action of that group on the skein algebra of $\Sigma$. The fact that this categorical action is trivial means that the action on the skein algebra is by inner automorphisms. Of course, since the category at hand is equivalent to $\vect$ in that case, the triviality of the action
		is automatic. We understand this as being essentially a reformulation of the Skolem--Noether theorem that says that every automorphism of a matrix algebras over a field $\k$ is inner.
\end{itemize}
The  argument 
that in the general case of connected cyclic framed $E_2$-algebras 
produces the projective mapping class group representations 
reduces now in this special case to the argument in~\cite{roberts}.

	 \subsection{Relation to higher-dimensional fully extended topological field theories}
	In some special cases, the structures and conditions featured in this paper have a natural interpretation in the framework of fully extended topological field theories. Let us emphasize that this interpretation relies on various statements 
	 which are, as of yet, still conjectural. Our main results thus
	establish in an independent way (in particular without relying on the cobordism hypothesis) parts of the field theories that are expected to be associated with $\cat{A}$. 

Let us explain in more detail the necessary background:
	By~\cite{Johnson-Freyd2017,Haugseng2017}
	$E_2$-algebras in a symmetric monoidal $(2,1)$-category $\cat{S}$ form naturally the objects of a certain `Morita' 4-category in which they are automatically 2-dualizable~\cite{gwilliamscheimbauer}. 
	 The Baez-Dolan-Lurie
	  cobordism hypothesis~\cite{baezdolan,lurietft}
	   then implies that any $E_2$-algebra $\cat{A}$ induces a framed 
	 two-dimensional topological field theory
	  valued in that Morita 4-category. If $\cat{A}$ is a framed $E_2$-algebra, this  produces an oriented theory and its value 
	  on surfaces agrees with factorization homology with coefficients in $\cat{A}$. If one assumes that $\cat{A}$ is in fact 3-dualizable and equipped with the structure of a homotopy $\text{SO}(3)$-fixed point compatible with its framed structure, this extends to an oriented  three-dimensional topological field theory. In particular, the evaluation on a handlebody $H$ with boundary $\Sigma$, seen as a three-dimensional bordism from $\Sigma$ to a disjoint union of $n$ disks yields a map
	\begin{align}
	   \int_\Sigma \cat{A} \longrightarrow \cat{A}^{\otimes n}\ .\label{eqnwouldbephimap}
	   \end{align}
	   We expect that~\eqref{eqnwouldbephimap} agrees with the map $\PhiA(H)$ in that case.
	   There is then a canonical choice of a certain boundary conditions leading to the construction of a relative topological field theory in the sense of~\cite{Freed2012,Johnson-Freyd2017}. Such a relative theory attaches to a 3-manifold with (closed) boundary $M$ an endomorphism of the unit $I$ of $\cat{S}$ (e.g. a vector space in the case $\cat{S}=\Rex$). 	    We expect that the data for this relative theory to itself be oriented  includes a cyclic structure on $\cA$, and that the value of such a relative theory on handlebodies should then recover the ansular functor associated with $\cat{A}$.

	   If $\cat{A}$ enjoys the much stronger property of being invertible in the Morita 4-category of $E_2$-algebras, then it induces an invertible four-dimensional topological field theory. Invertibility implies that the value of that theory on a handlebody $H$ depends on $H$ only up to four-dimensional bordisms. The relative theory should then produce an anomalous, partially defined, oriented three-dimensional topological field theory. Such a structure induces, by restriction to dimension two, a modular functor, whose value on a surface is given by evaluating the relative theory on any handlebody $H$ for that surface. The anomaly in that case reflects the fact that the theory still depends, albeit very weakly, on the choice of $H$, which recovers the well-known topological description of the central extensions of mapping class groups in that particular setting~\cite{masbaumroberts}.
	  In summary, it is widely expected from this perspective that cyclic, invertible  framed $E_2$-algebras should lead to examples of modular functors. Our  results thus provide
	   a direct proof that this is indeed the case, under the generally weaker condition of cofactorizability. 

	 We note that Lyubashenko's modular functor is expected to fit into this general picture as follows: If $\cat{A}$ is a ribbon category with enough projective objects, then the main result of~\cite{bjsdualizability} states that $\cat{A}$ is 3-dualizable in the Morita category of braided tensor categories.	 On the one hand, it is expected that the ribbon structure on $\cat{A}$ canonically extends to a homotopy $\text{SO(3)}$-fixed point structure. On the other hand, $\cat{A}$ turns out to be canonically cyclic in that case~\cite{cyclic}. If $\cat{A}$ is finite and non-degenerate (i.e.\ modular), it is shown in~\cite{bjss} that it is in fact invertible, hence in particular cofactorizable
	  (in the finite ribbon case, cofactorizability is sufficient for invertibility, although we do not expect this to be the case in general).
	 Finally, if $\cat{A}$ is additionally  semisimple, the corresponding anomalous field theory is expected~\cite{Freeda,Freed2012a,Walker} to extend to dimension three and to recover the
	 Reshetikhin-Turaev theory and its underlying modular functor. We refer to~\cite{haiounbgt} for an overview of recent results and conjectures on that perspective.

		\vspace*{0.2cm}\textsc{Acknowledgments.} We are grateful to David Jordan, 
		Gwénaël Massuyeau,
		Lukas Müller, Pavel Safronov, 
	Christoph Schweigert, Noah Snyder,
	Nathalie Wahl,
	  Simon Wood
	  and Yang Yang
	for  helpful discussions related to this project.
		LW gratefully acknowledges support by 
	the Danish National Research Foundation through the Copenhagen Centre for Geometry
	and Topology (DNRF151)
	at K\o benhavns Universitet during the period in which this project was initiated. 
	Moreover, LW gratefully acknowledges support
	by the ANR project CPJ n°ANR-22-CPJ1-0001-01 at the Institut de Mathématiques de Bourgogne (IMB).
	The IMB receives support from the EIPHI Graduate School (ANR-17-EURE-0002).

	\spaceplease 
	\section{Preliminaries}
		\subsection{Factorization homology of framed $E_2$-algebras\label{secprefh}}
	In this subsection,	 we recall the framework of \emph{factorization homology for surfaces}. References include \cite[Chapter~5.5]{higheralgebra} and 
			\cite{andrade,AF} for  factorization homology in general and \cite{bzbj,bzbj2} for category-valued two-dimensional case.
	It allows us to integrate a framed little 2-disk algebra in a suitable symmetric monoidal $(2,1)$-category over a surface. 
	
In order to supply a few more details, let us fix some conventions and notation for the symmetric monoidal $(2,1)$-category $\smc$ that factorization homology and all operadic algebras will take their values in (unless otherwise stated):	
\begin{quote}
	For the rest of the article, $\smc$ will be a bicomplete  symmetric monoidal $(2,1)$-category with monoidal product $\otimes$ such that for $M\in\smc$ the functor $M\otimes -$ commutes with  colimits.
\end{quote}
An excellent reference for symmetric monoidal $(2,1)$-categories is~\cite[Chapter~2]{schommerpries}. However,
 having in mind the case $\smc=\Rex$, the symmetric monoidal $(2,1)$-category of finitely cocomplete linear categories, cocontinuous (`right exact') functors and natural isomorphisms, that we will define in Section~\ref{exseccyclicframede2}, is fully sufficient.

	We denote by $E_2$ the \emph{little disk operad}, a topological operad whose space of $n$-ary operations is given by the space of  embeddings of $n$ disks into one disk that are obtained as a combination of rescalings and translations. If additionally one allows rotations, one obtains the \emph{framed little disk operad} $\framed$. We refer to \cite[Section~5]{FresseI} for a textbook reference on the $E_2$-operad and additionally to \cite{salvatorewahl} for the framed case.
	Both $E_2$ and $\framed$ are aspherical, i.e.\ their homotopy groups $\pi_\ell$ for $\ell\ge 2$ are trivial. Hence, they can be seen as groupoid-valued operads. In fact, in arity $n$, the spaces $E_2(n)$ and $\framed (n)$ are equivalent to the classifying spaces of the pure braid group on $n$ strands and the framed pure braid group on $n$ strands, respectively.
	Since $\smc$, by virtue of being a $(2,1)$-category,
	 is enriched over $\Grpd$, we can consider $E_2$-algebras and $\framed$-algebras in $\smc$. An algebra in $\smc$ over a $\Grpd$-valued operad $\cat{O}$  is an object $\cat{A}$ in $\smc$ together with maps $\cat{O}(n)\to\smc(\cat{A}^{\otimes n},\cat{A})$ that respect equivariance under the symmetric group actions and operadic composition up to coherent isomorphism;
	 we will explain how to formally define cyclic and modular operadic algebras
	  in Section~\ref{cyclicandmodularalgebras}.

	Factorization homology attaches to a given $\framed$-algebra $\cat{A}$ in $\smc$ and 
	a surface $\Sigma$ 
	 an object 	$\int_\Sigma \cat{A} \in \smc$.
	 This construction is functorial with respect to oriented embeddings.
	For us, `surface' means throughout compact oriented two-dimensional manifold with parametrized, possibly empty boundary.
	Here a parametrization of the boundary is the datum of an orientation-preserving diffeomorphism \begin{align} \bigsqcup_{\substack{\text{set of boundary} \\ \text{components of $\Sigma$}}} \mathbb{S}^1\ra{\cong} \partial \Sigma \ , 
		\end{align}
	for an orientation on $\mathbb{S}^1$ fixed throughout.  
	 A priori, factorization homology
	with coefficients in a framed $E_2$-algebra can only be evaluated on an oriented two-dimensional manifold without boundary. For the extension to the case with boundary, it is understood throughout that we extend
	 the framed $E_2$-algebra in a trivial way to a framed Swiss-Cheese algebra, see \cite[Remark~2.2]{bzbj} for this convention.
	
	Roughly speaking, factorization homology is defined on disks and their embeddings into each other 
	using the $\framed$-structure of $\cat{A}$. Then one performs a homotopy left Kan extension along the inclusion of the symmetric monoidal category of disks into the symmetric monoidal category of all surfaces. 
	This entails that $\int_\Sigma \cat{A}$ can be described as a homotopy colimit (or simply put, the `correct' $(2,1)$-categorical colimit)
	\begin{align}
	\int_\Sigma \cat{A} = \hocolimsub{\substack{\varphi :  ( \mathbb{D}^2)^{\sqcup n} \to \Sigma   \\ n\ge 0}} \cat{A}^{\otimes n}
	\label{fheqncolim}
	\end{align}
	 over all oriented embeddings of $n$ disks into $\Sigma$
	 with $n$ running over all non-negative integers.
	 As usual, $\cat{A}^{\otimes \, 0} = I$.

	The object $\int_\Sigma \cat{A} \in \smc$ is characterized by the fact that$\int_{\mathbb{D}^2} \cat{A}\simeq \cat{A}$ and by a locality property called \emph{excision}:
	Suppose that a surface $\Sigma$ is obtained by gluing surfaces $\Sigma_0$ and $\Sigma_1$ together along a boundary component,
	 then $\int_{\Sigma_0} \cat{A}$ and $\int_{\Sigma_1} \cat{A}$ is a left and right module, respectively,
	 over the algebra $\int_{\mathbb{S}^1 \times \opint } \cat{A}$, and the embedding $\Sigma_0 \sqcup \Sigma_1 \to \Sigma$
	 obtained by choosing a collar of the gluing boundary component
	  induces an equivalence
	 $\int_{\Sigma_0}\cat{A} \otimes_{\int_{\mathbb{S}^1 \times \opint }\cat{A}} \int_{\Sigma_1 }\cat{A}\ra{\simeq}
	 \int_\Sigma \cat{A}$ from the relative monoidal product over $\int_{\mathbb{S}^1 \times \opint } \cat{A}$
	 to $\int_\Sigma \cat{A}$; we refer to \cite[Section~3.3]{AF} for details.

For any  surface $\Sigma$, the 
diffeomorphisms $\Sigma\to\Sigma$ preserving the orientation and boundary parametrization
 form a topological group $\catf{Diff}(\Sigma)$. 
The group of path components $\Map(\Sigma):=\pi_0(\catf{Diff}(\Sigma))$ is the mapping class group of the  surface $\Sigma$. If we mention 
in the sequel
diffeomorphisms or mapping classes, it will always be implicit that we are only interested in those that preserve
the orientation and boundary parametrization.
By construction $\int_\Sigma \cat{A}$ comes with a homotopy coherent $\catf{Diff}(\Sigma)$-action. 
	
	For a surface $\Sigma$,
	the embedding $\emptyset \to \Sigma$ yields a map $\cat{O}_\Sigma^\cat{A} : I \to \int_\Sigma\cat{A}$ from the unit $I$ of $\smc$ to $\int_\Sigma\cat{A}$. 
	This uses the  functoriality of factorization homology with respect to embeddings and the canonical equivalence $\int_\emptyset \cat{A}\simeq I$.
	Hence, $\int_\Sigma \cat{A}$ comes with a generalized object
	 that we refer to as
	the \emph{distinguished object} of $\int_\Sigma \cat{A}$. In \cite[Section~5.1]{bzbj} it is also referred to as \emph{quantum structure sheaf}.
	Since the diffeomorphism group of $\Sigma$ preserves tautologically the embedding $\emptyset \to \Sigma$, the
	distinguished object $\cat{O}_\Sigma^\cat{A}$ canonically comes with the structure of a homotopy $\catf{Diff}(\Sigma)$-fixed point.

\subsection{The modular extension of cyclic framed little disks algebras\label{cyclicandmodularalgebras}}
	The operad $\framed$ is a \emph{cyclic} operad 
	in the sense of Getzler and Kapranov~\cite{gk}, i.e.\
	the spaces of operations come with a prescription to cyclically permute the inputs with the output. 
	Below we will make ample use of this cyclic structure.
	In order to handle cyclic operads, but also modular operads \cite{gkmod} efficiently, we will use the description given in \cite{costello} based on certain categories of graphs:
	First recall that a graph consists of a set of half edges and a set of vertices (for us always finite) plus a map from half edges to vertices telling us to which vertex
	a half edge is attached and an involution on the set of half edges telling us how the half edges are glued together. 
		A \emph{corolla} is a contractible
		graph with one vertex and a finite number of legs attached to it.
		 We denote by $\Legs(T)$ the set of legs of a corolla $T$. Unless otherwise stated, $\Legs(T)$ can be empty.
		 	Now we denote by $\Graphs$
	the category whose objects are finite disjoint unions of {corollas}. 
	A morphism $T\to T'$ between two disjoint unions of corollas is an equivalence class of graphs $\Gamma$ 
	 together with \begin{itemize}
	 	\item an identification $\varphi_1$ of $T$ with \begin{align}\nu(\Gamma):= \text{the disjoint union of corollas obtained from $\Gamma$ by cutting $\Gamma$ at all internal edges}
	\end{align} 
	(an identification of graphs is here a bijection between the sets of half edges and vertices compatible in the obvious way with the gluing information),
	\item  and an identification of $T'$ 
	with \begin{align}\pi_0(\Gamma):=\text{the disjoint union of corollas obtained from $\Gamma$ by contracting all internal edges.}\end{align} 
	\end{itemize}
We 
illustrate
the definition of $\nu(\Gamma)$ and $\pi_0(\Gamma)$
in Figure~\ref{gmfig}. 
Such triples $(\Gamma,\varphi_1,\varphi_2)$ 
and $(\Gamma',\varphi_1',\varphi_2')$ are defined to be equivalent if
there is an identification $\Gamma\cong \Gamma'$ of graphs compatible in the obvious way with $\varphi_1,\varphi_2,\varphi_1'$ and $\varphi_2'$.

		\begin{figure}[h]
	\centering 
		\begingroup%
		\makeatletter%
		\providecommand\color[2][]{%
			\errmessage{(Inkscape) Color is used for the text in Inkscape, but the package 'color.sty' is not loaded}%
			\renewcommand\color[2][]{}%
		}%
		\providecommand\transparent[1]{%
			\errmessage{(Inkscape) Transparency is used (non-zero) for the text in Inkscape, but the package 'transparent.sty' is not loaded}%
			\renewcommand\transparent[1]{}%
		}%
		\providecommand\rotatebox[2]{#2}%
		\newcommand*\fsize{\dimexpr\f@size pt\relax}%
		\newcommand*\lineheight[1]{\fontsize{\fsize}{#1\fsize}\selectfont}%
		\ifx\svgwidth\undefined%
		\setlength{\unitlength}{682.32338327bp}%
		\ifx\svgscale\undefined%
		\relax%
		\else%
		\setlength{\unitlength}{\unitlength * \real{\svgscale}}%
		\fi%
		\else%
		\setlength{\unitlength}{\svgwidth}%
		\fi%
		\global\let\svgwidth\undefined%
		\global\let\svgscale\undefined%
		\makeatother%
	 \resizebox{1.38\linewidth}{!}{	\begin{picture}(1,0.30324158)%
			\lineheight{1}%
			\setlength\tabcolsep{0pt}%
			\put(0,0){\includegraphics[width=\unitlength,page=1]{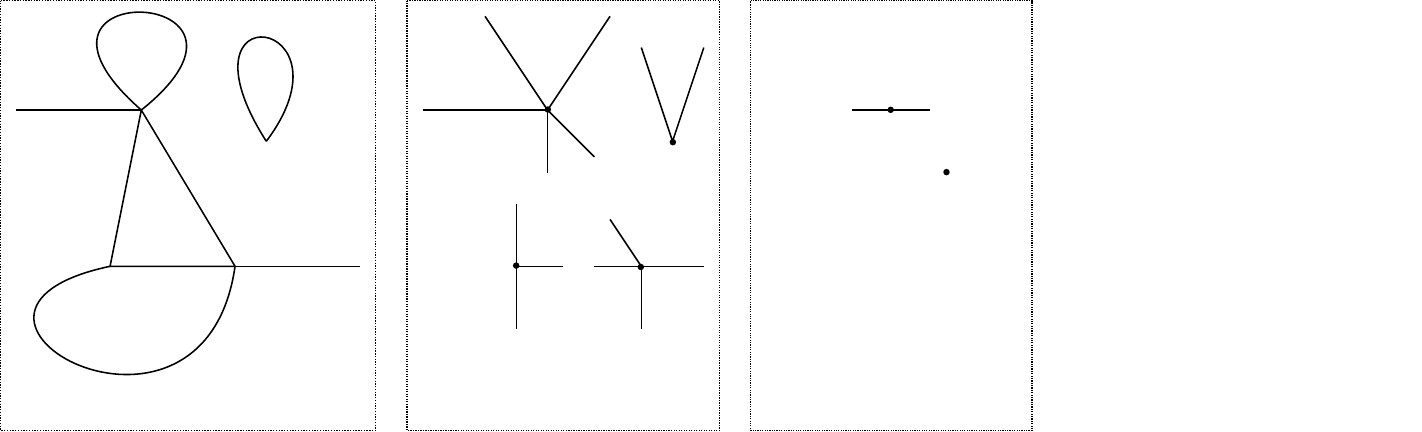}}%
			\put(0.120913,0.0156182){\color[rgb]{0,0,0}\makebox(0,0)[lt]{\lineheight{1.25}\smash{\begin{tabular}[t]{l}$\Gamma$\end{tabular}}}}%
			\put(0.384167,0.01710117){\color[rgb]{0,0,0}\makebox(0,0)[lt]{\lineheight{1.25}\smash{\begin{tabular}[t]{l}$\nu(\Gamma)$\end{tabular}}}}%
			\put(0.62007372,0.02124648){\color[rgb]{0,0,0}\makebox(0,0)[lt]{\lineheight{1.25}\smash{\begin{tabular}[t]{l}$\pi_0(\Gamma)$\end{tabular}}}}%
			\put(0,0){\includegraphics[width=\unitlength,page=2]{graphX.pdf}}%
		\end{picture}%
	}
		\endgroup%
		\caption{On the definition of $\nu(\Gamma)$ and $\pi_0(\Gamma)$.}
		\label{gmfig}
	\end{figure}

	In order to form the composition $\Gamma_2 \circ \Gamma_1$, one replaces the vertices of $\Gamma_2$ with the graph $\Gamma_1$, we refer to \cite[Section~2.1]{cyclic} for more details and explaining
	pictures. Disjoint union endows $\Graphs$ with a symmetric monoidal structure. We denote by $\Forests\subset \Graphs$ the symmetric monoidal subcategory of $\Graphs$ whose objects are finite disjoint unions of corollas (hence, it has the same objects as $\Graphs$)
	and whose morphisms are forests, i.e.\ disjoint
	unions of \emph{trees} (a tree is a contractible graph).

A \emph{cyclic operad} in a symmetric monoidal $(2,1)$-category $\smc$ (the requirements on $\smc$
 encountered in the last section on factorization homology are not needed here)
can now be defined as a symmetric monoidal functor $\cat{O}:\Forests\to\smc$, where the 1-category $\Forests$ is seen as a symmetric monoidal $(2,1)$-category without non-identity 2-morphisms and where `symmetric monoidal functor' is understood up to coherent isomorphism \cite[Chapter~2]{schommerpries}; a summary of what that means is given in~\cite[Section~2.1]{cyclic}.
A \emph{modular operad} in $\smc$ is a symmetric monoidal functor $\cat{O}:\Graphs\to\smc$.
(Ordinary non-cyclic operads can be described as symmetric monoidal functors out of $\RForests$, the version of $\Forests$ where the graphs come equipped with a distinguished leg, the so-called \emph{root}.)
	We have to highlight a minor technical point:
	Cyclic and modular operads, when defined this way using graph categories, do a priori not have an operadic identity, i.e.\ a distinguished unary operation that behaves neutrally with respect to composition --- of course up to coherent isomorphism. Operadic identities can be included via \cite[Definition~2.5]{cyclic}.
	In this article, the term `operad' (and its cyclic and modular variants) will by default always include operadic identities,
	and all maps between operads preserve these operadic identities up to coherent isomorphism. 

An important example for us is the \emph{modular handlebody operad} $\Hbdy:\Graphs\to\Cat$. To a corolla $T$, it assigns the groupoid $\Hbdy(T)$ whose objects are compact
 oriented
three-dimensional	handlebodies (hereafter just referred to as handlebodies) that are connected  and come with $|\Legs(T)|$ many 
disks embedded in the boundary. The embedded disks are parametrized, i.e.\ they come with an orientation-preserving diffeomorphism of the boundary disks of $H$ to $\left(   \mathbb{D}^2\right)^{\sqcup \Legs(T)}$ for an orientation fixed throughout on $\mathbb{D}^2$.
	Morphisms are 
	isotopy classes of orientation-preserving diffeomorphisms of  handlebodies respecting the disk parametrizations (we will just call these `mapping classes'). 
	If $T$ has $n$ legs, then in short
	\begin{align}
	\Hbdy(T) = \left\{ \begin{array}{c} \text{groupoid of connected handlebodies}  \\ \text{with $n$ parametrized disks embedded in the boundary}  \\ \text{and mapping classes as morphisms}
	\end{array}\right\} \ . 
	\end{align}
	The assignment is extended monoidally, i.e.\ $\Hbdy(T\sqcup T')=\Hbdy(T)\times \Hbdy(T')$ for corollas $T$ and $T'$. We see a pair $(H,H')\in \Hbdy(T\sqcup T')=\Hbdy(T)\times \Hbdy(T')$ as the handlebody $H\sqcup H'$. In other words, non-connected handlebodies are allowed in the operad $\Hbdy$, but they are associated to an object in $\Graphs$ that is not a corolla.
	Operadic composition is by gluing. The cyclic action on the parametrizations of embedded disks
	 gives us the cyclic structure; we refer to \cite[Section~4.3]{giansiracusa} for details.
	 
	If $T$ has $n$ legs, we might be tempted to denote $	\Hbdy(T)$ as $	\Hbdy(n)$ with $n$ as the total arity (or, alternatively, $\Hbdy(n-1)$ since we have $n-1$ inputs and one output),
	 but seeing the arity really as a (disjoint union of) corollas
	has the advantage that no ordering of the legs is implied through the numbering. 

	If we restrict to genus zero handlebodies,
	we obtain a cyclic operad that we can identify with the framed $E_2$-operad $\framed$. 
	In other words, $\framed$ is the cyclic operad of genus zero handlebodies or, equivalently, genus zero surfaces.
	
	By replacing in the definition of $\Hbdy$ handlebodies with  
	 surfaces (again, let us recall that for us these are always compact and oriented), we obtain the \emph{modular operad $\Surf$ of 
	 	 surfaces}, also called the \emph{modular surface operad}. Again, if $T$ has $n$ legs, this definition amounts to
	\begin{align}
\Surf(T) = \left\{ \begin{array}{c} \text{groupoid of connected surfaces}  \\ \text{with $n$ parametrized boundary components}  \\ \text{and mapping classes as morphisms}
	\end{array}\right\} \ , 
	\end{align}
with the notion of boundary parametrization being the one from Section~\ref{secprefh}.
For $\Sigma \in \Surf(T)$,
$
\Map(\Sigma) = \pi_1(\Surf(T),\Sigma)
$
is the mapping class group of $\Sigma$. Since it consists of \emph{automorphisms} of $\Sigma$ in $\Surf(T)$, the boundary parametrizations must be preserved. This version of the mapping class group is often referred to as the pure mapping class group.
Nonetheless, `non-pure' mapping classes of $\Sigma$ are still contained in $\Surf(T)$, but as \emph{isomorphisms} instead of \emph{automorphisms}.

Given any cyclic or modular operad $\cat{O}$ in $\Cat$,
one can consider cyclic or modular algebras in any symmetric monoidal $(2,1)$-category $\smc$.
We will give here only a rather dense summary and refer for the details to \cite{gk} for the 1-categorical case and moreover to \cite[Section~2]{cyclic} for the bicategorical generalization that we are interested in:
Suppose that $X$ is a self-dual object in the symmetric monoidal $(2,1)$-category $\smc$, i.e.\ $X$ comes with a map $\kappa :X\otimes X \to I$, called \emph{evaluation}, and a map $\Delta : I \to X \otimes X$, called \emph{coevaluation}, such that the usual zigzag identities are fulfilled up to isomorphism (this exhibits $X$ as its own dual in the homotopy category of $\smc$). A map $\kappa :X\otimes X \to I$ that is part of a self-duality of $X$ and that is symmetric up to coherent isomorphism is referred to as \emph{non-degenerate symmetric pairing on $X$.}
For any corolla $T $, we denote by $X^{\otimes \Legs(T )}$ the unordered monoidal product of $X$ running over the set $\Legs(T )$ of legs of $T $ and set $\End_\kappa^X(T ):=\smc(I,X^{\otimes\Legs(T )})$ for a  corolla $T$
(this is dual to the conventions in \cite[Section~2]{cyclic}, but the difference is insignificant). The definition is extended monoidally to the non-connected case, i.e.\
$\End_\kappa^X(T )=\prod_{c\in\mathsf{C}(T)}\smc(I,\cat{A}^{\otimes \Legs(c)})$ for a possibly non-connected object $T$ in $\Graphs$ with set $\mathsf{C}(T)$ of path components. This assignment extends to a symmetric monoidal functor $\Graphs\to\Cat$, i.e.\ to a modular operad, the \emph{endomorphism operad of $(X,\kappa)$}. Its restriction to $\Forests$ is called the \emph{cyclic endomorphism operad of $(X,\kappa)$}. 

For a given $\Cat$-valued cyclic operad $\cat{O}$, the structure of a cyclic $\cat{O}$-algebra on $X \in \smc$ is defined as
the choice of a non-degenerate symmetric pairing
$\kappa :X\otimes X \to I$
 on $X$ and a morphism $\cat{O}\to\End_\kappa^X$ of cyclic operads. Modular algebras are defined analogously.

For the cyclic framed $E_2$-operad $\framed$, which can be regarded as being groupoid-valued,
one can now consider cyclic $\framed$-algebras in any symmetric monoidal $(2,1)$-category $\smc$.
Similarly, one can consider modular $\Hbdy$-algebras in $\smc$. 
A modular $\Hbdy$-algebra is called \emph{ansular functor} in \cite{mwansular}.

The results in \cite{mwansular}
building on~\cite{cyclic,mwdiff}	
give an explicit description of cyclic framed $E_2$-algebras and ansular functors in $(2,1)$-categories.
In order to state these results, we need the following notion:

\begin{definition}[$\text{\cite[Definition~5.4]{cyclic}}$]\label{defsda}
A \emph{self-dual balanced braided algebra} in  $\smc$ is an object $\cat{A} \in \smc$
together with the following structure: \begin{itemize}
	\item $\cat{A}$ is a \emph{balanced braided algebra} in $\smc$ --- in more detail: 
	\begin{itemize}
		\item 
		$\cat{A}$ has a multiplication $\mu : \cat{A} \otimes \cat{A} \to \cat{A}$ that is associative and unital up to coherent isomorphism
		(we denote the unit  1-morphism  by $u:I\to \cat{A}$);
		\item $\cat{A}$ comes with an isomorphism $c: \mu \to \mu^\op = \mu \circ \tau$ (where $\tau$ is the symmetric braiding of $\smc$) called \emph{braiding} which satisfies the hexagon relations;
		\item $\cat{A}$ comes with an isomorphism $\theta : \id_\cat{A} \to \id_\cat{A}$ referred to as \emph{balancing} satisfying
		\begin{align}
			\theta \circ \mu  &= c^2 \circ \mu(\theta  \otimes \theta) \ , \\
			\theta \circ u &= \id_u \ . 
		\end{align}
	\end{itemize}
	\item $\cat{A}$ comes with a non-degenerate pairing
	$\kappa : \cat{A} \otimes \cat{A} \to I$
	(a 1-morphism
	exhibiting $\cat{A}$ as its own dual in the homotopy category of $\smc$)
	together with an isomorphism $\gamma : \kappa(u\otimes \mu)\to \kappa$ subject to the following conditions:  \begin{itemize}\item The isomorphism
		$\kappa(u\otimes \mu(u,-))\longrightarrow \kappa(u,-)$ coming from  
		$\gamma$ coincides with the isomorphism coming from the unit constraint $\mu(u,-)\cong \id_\cat{A}$.
		\item The equality $\kappa(\theta \otimes \id_\cat{A})=\kappa(\id_\cat{A} \otimes \theta)$ holds.
	\end{itemize}
\end{itemize}
\end{definition}

We give examples of
self-dual balanced braided algebras in~Subsection~\ref{exseccyclicframede2}.
Framed $E_2$-algebras in a 
 symmetric monoidal $(2,1)$-category 
 are equivalent to balanced braided  algebras~\cite{WahlThesis,salvatorewahl}. The following is an extension of this result characterizing \emph{cyclic} algebras over that operad:

\spaceplease
\begin{theorem}[\cite{cyclic,mwansular}]\label{thmmodext}
	The following structures
	are equivalent:
	\begin{pnum}
		
		\item A cyclic framed $E_2$-algebra in $\smc$.\label{thmmodexti}
		
		\item A self-dual balanced braided algebra in $\smc$.\label{thmmodextii}
		
		\item A modular $\Hbdy$-algebra in $\smc$, i.e.\ an $\smc$-valued ansular functor.\label{thmmodextiii}
		
		\end{pnum}
The algebraic structures described under each of these points form naturally a 2-groupoid, and the equivalence is an equivalence of 2-groupoids.
	\end{theorem}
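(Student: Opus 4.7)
The plan is to establish the two equivalences (i) $\Leftrightarrow$ (ii) and (i) $\Leftrightarrow$ (iii) separately; the 2-groupoid structure on each side is built into the relevant notion of morphism (map of operadic algebras up to coherent isomorphism) and the constructions below will be 2-functorial by inspection.

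For (i) $\Leftrightarrow$ (ii), the starting point is the non-cyclic case: a framed $E_2$-algebra in a symmetric monoidal $(2,1)$-category is equivalent to a balanced braided algebra (the Salvatore--Wahl recognition principle, as adapted to the bicategorical setting in \cite{cyclic}). It remains to account for cyclicity. The cyclic structure on $\framed$ is generated, modulo operadic relations, by the action that rotates the output leg of a genus zero operation through the inputs. Evaluated on $\cat{A}$, this extracts from the underlying self-duality data a non-degenerate symmetric pairing $\kappa:\cat{A}\otimes \cat{A}\to I$ (from cycling the binary identity), together with a coherence isomorphism $\gamma:\kappa(u\otimes \mu)\to \kappa$ coming from cycling one input of the multiplication past the output, and the balancing compatibility $\kappa(\theta\otimes \id_\cat{A})=\kappa(\id_\cat{A}\otimes \theta)$ coming from cycling the unary rotation generator. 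Conversely, every self-dual balanced braided algebra in the sense of Definition~\ref{defsda} assembles into a cyclic $\framed$-algebra by exhibiting all higher cyclic operations as canonical composites of $\mu$, $c$, $\theta$, $u$ and $\kappa$, uniquely determined by the chosen data up to coherent isomorphism. This matches \cite[Section~5]{cyclic}.

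For (i) $\Leftrightarrow$ (iii), the key input is the topological identification of the modular envelope of $\framed$: as shown by Giansiracusa \cite{giansiracusa}, the modular handlebody operad $\Hbdy$ is the modular envelope of the cyclic framed little disks operad $\framed$, via the inclusion of genus zero handlebodies. Lifting this identification from $\Cat$ to a symmetric monoidal $(2,1)$-category $\smc$ satisfying the bicompleteness assumptions, I would follow \cite{mwansular} and construct the left adjoint to the restriction functor from modular $\Hbdy$-algebras to cyclic $\framed$-algebras as a homotopy left Kan extension along $\framed\hookrightarrow \Hbdy$, then verify that the adjunction unit is an equivalence. Concretely, given a cyclic framed $E_2$-algebra $\cat{A}$, one presents $\Hbdy(T)$ via gluings of genus zero handlebodies and uses the cyclic $\framed$-algebra structure on $\cat{A}$ to define values on higher-genus handlebodies, with well-definedness controlled exactly by Giansiracusa's presentation.

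The main obstacle is bicategorical coherence. While both the Salvatore--Wahl recognition and Giansiracusa's modular envelope theorem are classical at the $1$-categorical level, transporting them to $(2,1)$-categorical algebras forces one to promote every operadic relation to a natural 2-isomorphism and to verify the coherence of all composites of such 2-isomorphisms. This is precisely the technical heart of \cite{cyclic} and \cite{mwansular}; the outline above isolates the algebraic data that must be matched on each side and reduces the theorem to these two coherence-heavy inputs. Once they are in hand, the assembly into a chain of 2-groupoid equivalences (i) $\simeq$ (ii) $\simeq$ (iii) is straightforward.
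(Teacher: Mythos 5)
Your outline matches the paper's own treatment: the paper does not reprove this theorem but cites \cite{cyclic} for the equivalence of~(i) and~(ii) (cyclic framed $E_2$-algebras as self-dual balanced braided algebras) and \cite{mwansular} for~(i)$\Leftrightarrow$(iii), where the inverse to restriction is the modular extension built, exactly as you describe, as a (homotopy) left Kan extension along $\framed\hookrightarrow\Hbdy$, with Giansiracusa's identification of the derived modular envelope of $\framed$ (strengthened in \cite{mwdiff,mwansular}) as the key topological input. So your proposal is correct and follows essentially the same route, with the coherence-heavy steps delegated to the same references the paper relies on.
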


The equivalence between~\ref{thmmodexti} and~\ref{thmmodextii}
is the main result of \cite{cyclic}. 
 The equivalence between~\ref{thmmodexti} and~\ref{thmmodextiii} is established
  in \cite{mwansular} and is, roughly, based on the following:
   Clearly, any modular $\Hbdy$-algebra can be restricted to a cyclic $\framed$-algebra. The inverse is given by the \emph{modular extension} for cyclic algebras~\cite[Section~4]{mwansular} and uses results of Giansiracusa \cite{giansiracusa} on the derived modular envelope of $\framed$
   (the notion of the modular envelope is due to Costello~\cite{costello}) and the strengthening of Giansiracusa's result in \cite{mwdiff,mwansular}.
 We denote the modular extension of a framed $E_2$-algebra $\cat{A}$, i.e.\ the unique extension
 to a $\Hbdy$-algebra
 (an ansular functor) by $\widehat{\cat{A}}$. The evaluation of the modular extension on an arbitrary handlebody is explicitly described
in \cite[Corollary~6.3]{mwansular}
via a `coend formula' that will appear in Corollary~\ref{subsecconfblock} below.

\begin{remark}The 
	assumptions on $\smc$
	from Section~\ref{secprefh}
	are not needed in Definition~\ref{defsda} and Theorem~\ref{thmmodext}. Both are true for any symmetric monoidal $(2,1)$-category.
	\end{remark}

\begin{remark}\label{remSURF}
	Even though we will, for the construction of modular functors in the context of this article, always be interested in mapping classes instead of diffeomorphisms, let us, for some applications in the proof of Theorem~\ref{thmdefsurfa}, mention that there is also the modular operad $\SURF$ of surfaces and \emph{diffeomorphisms} (as always, it is implicit that surfaces are compact, oriented and with boundary parametrization,
and that the diffeomorphisms preserve this).
Note that
$\SURF$, as opposed to $\Surf$, is not groupoid-valued, but a priori a topological modular operad, or equivalently a modular operad with values in $\infty$-groupoids, i.e.\
a symmetric monoidal functor
from $\Graphs$ to spaces or $\infty$-groupoids up to coherent homotopy. The framework for this $\infty$-categorical approach to operads is laid out in \cite[Chapter~2]{higheralgebra}, see also \cite{hry}. The details of the $\infty$-categorical approach will not concern us here because we are, at the end of the day, interested in the
 1-categorical truncation $\Surf=\Pi \SURF$, where $\Pi$ is the fundamental groupoid.
\end{remark}

\subsection{The boundary map\label{boundarymapsec}}
For $H\in \Hbdy(T)$ with $T\in\Graphs$, we can form the boundary surface $\partial H$ of $H$
which is defined as the boundary of $H$ minus the interior of the parametrized disks embedded in its boundary.
Then we obtain a functor $\partial : \Hbdy(T)\to\Surf(T)$, and in fact, a morphism of modular operads
\begin{align}
\partial : \Hbdy \to\Surf \ ,  \label{eqnpartialmapop}
\end{align} 
the \emph{boundary map}, see also~\cite[Section~7.3]{cyclic}.

In order to investigate~\eqref{eqnpartialmapop}, let us recall the following notion:
We call a functor $F:\Gamma \to \Omega$ between groupoids  a
\emph{fibration} if any 
lifting problem 
in simplicial sets
\begin{equation}\label{eqnaliftingproblem}
\begin{tikzcd}
0 \ar[rr] \ar[dd,"",swap] && B\Gamma\ar[dd,"BF"] \\ \\ 
\Delta^1 \ar[rr] \ar[rruu,"\exists", ,dashed ] & & B\Omega
\end{tikzcd}
\end{equation}
has a  solution (here $B$ denotes the nerve of a category). 
Functors between categories with this lifting property are also called \emph{isofibrations}.
For categories, there is the a priori stronger notion of a \emph{Grothendieck fibration}, but for groupoids, both notions agree. Therefore, we will just call this type of functor a fibration. For fibrations, the strict fiber and the homotopy fiber are equivalent.
We call a map of (modular) groupoid-valued operads a
fibration 
if it has this property arity-wise.

\begin{proposition}\label{propisofib}
	The natural map $\partial : \Hbdy \to \Surf$ 
	taking the boundary of handlebodies
	is a fibration of modular operads.
	The fiber and, equivalently, the homotopy fiber over some $\Sigma\in\Surf(T)$ is discrete up to equivalence in the sense that its automorphism groups are trivial. Its isomorphism classes can be identified with the set $\Map(\Sigma)/\Map(H)$ of left cosets of $\Map(H)$ in $\Map(\Sigma)$, where $H$ is any handlebody with boundary $\Sigma$.
\end{proposition}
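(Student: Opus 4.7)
The plan is to handle the three assertions separately: the fibration property, the discreteness of the fibers, and the coset description of isomorphism classes. First I would check arity-wise that $\partial : \Hbdy \to \Surf$ is an isofibration. Given $H \in \Hbdy(T)$ with $\partial H = \Sigma$ and an isomorphism $f : \Sigma \to \Sigma'$ in $\Surf(T)$ represented by a diffeomorphism $\tilde f$, a lift is produced by regluing: attaching a collar $\Sigma' \times [0,1]$ to $H$ along $\tilde f$ produces a handlebody $H'$ with $\partial H' = \Sigma'$ whose underlying $3$-manifold is diffeomorphic to $H$ through the collar, and the resulting mapping class $H \to H'$ restricts to $f$ on the boundary. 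Since the lifting condition is checked arity-wise, this gives the fibration property for the map of modular operads, and the usual equivalence between strict and homotopy fibers of an isofibration of groupoids handles the corresponding statement in the proposition.

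Next I would analyze the automorphisms of an object $H$ in the strict fiber over $\Sigma$. By definition such an automorphism is a mapping class $\varphi \in \Map(H)$ whose image under $\partial$ equals the identity in $\Map(\Sigma)$; hence the automorphism group of $H$ in the fiber coincides with the kernel of the boundary restriction $\Map(H) \to \Map(\Sigma)$. The key input, which I regard as the main obstacle of the proof, is the classical injectivity of this restriction for handlebodies, due to Luft and Laudenbach and summarized in Hensel's survey on handlebody groups; its proof is unaffected by the presence of parametrized disks embedded in the boundary. Granting this, the fiber has trivial automorphism groups and is discrete in the sense asserted.

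Finally, to enumerate the isomorphism classes in the fiber, I would exhibit a transitive action of $\Map(\Sigma)$ on this fiber by twisting the boundary identification: given $H$ with $\partial H = \Sigma$ and $g \in \Map(\Sigma)$, define $g \cdot H$ to be the same $3$-manifold with boundary identified with $\Sigma$ via composition with $g$. Any two handlebodies with boundary $\Sigma$ are abstractly diffeomorphic since both have genus equal to the genus of $\Sigma$, and comparing their boundary identifications exhibits the required element of $\Map(\Sigma)$; this shows transitivity. The stabilizer of $H$ is precisely the image of $\Map(H)$ in $\Map(\Sigma)$, identified with $\Map(H)$ itself by the injectivity used above. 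The orbit-stabilizer principle then yields the desired bijection of isomorphism classes in the fiber with the left coset space $\Map(\Sigma)/\Map(H)$.
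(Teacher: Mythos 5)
Your proposal is correct and follows essentially the same route as the paper: the lift is the same mapping-cylinder/collar construction (the paper's $f.H$ from Lemma~\ref{lemmaisofibprop}), and the key input in both arguments is the classical injectivity of $\Map(H)\to\Map(\partial H)$, which gives trivial automorphism groups. Your orbit–stabilizer packaging of the coset identification is only a cosmetic variant of the paper's direct description of the homotopy fiber as pairs $(H_0,f)$ with morphisms $g$ satisfying $\partial g=(f')^{-1}f$.
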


Since the fibers of the fibration $\partial : \Hbdy \to \Surf$ are discrete, we should think of $\partial$ as a \emph{covering} of modular operads.

In the sequel,
we will not only use the lifting property, but also sometimes rather
specific lifts. Before giving the full proof of Proposition~\ref{propisofib}, let us record these for later reference:
For a diffeomorphism $f:\Sigma \to \Sigma'$ and
a handlebody $H$ with $\partial H=\Sigma$, 
we define the handlebody $f.H$ as the pushout 
\begin{equation}	
\begin{tikzcd}
\Sigma\times\{0\} \cong \Sigma=\partial H \ar[rr] \ar[dd,"f\times 0",swap] && H \ar[dd,"\widetilde f"] \\ \\ 
\Sigma'\times [0,1] \ar[rr, swap]  & & f.H \ ;
\end{tikzcd}
\end{equation}
similar constructions are often described as `gluing a mapping cylinder' to $H$.

\begin{lemma}\label{lemmaisofibprop}
	The diffeomorphism 
	$\widetilde f : H\to f.H$ satisfies $\partial \widetilde f =f$, and the
	mapping class $\widetilde f : H\to f.H$
	solves the lifting problem
	\begin{equation}\label{eqnliftsquare}
	\begin{tikzcd}
	0 \ar[rr,"H"] \ar[dd,"",swap] && B\Hbdy(T) \ar[dd, "B\partial"] \\ \\ 
	\Delta^1 \ar[rr,"f", swap] \ar[rruu,"\widetilde f", ,dashed ] & & B\Surf(T) \ . 
	\end{tikzcd}
	\end{equation}
\end{lemma}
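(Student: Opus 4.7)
The plan is to unpack the pushout explicitly and then to check the three claims in turn: first, that $\widetilde{f}:H\to f.H$ is a diffeomorphism; second, that its boundary restriction is $f$; third, that it fits as a lift in the displayed square.

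For the first step, observe that $f.H$ is obtained from $H$ by gluing the mapping cylinder $\Sigma'\times[0,1]$ along its bottom $\Sigma'\times\{0\}$ to the boundary $\Sigma=\partial H$ via the diffeomorphism $f$. Attaching a collar in this way does not change the diffeomorphism type: the inclusion $\widetilde{f}:H\hookrightarrow f.H$ admits an inverse diffeomorphism obtained by retracting the collar $\Sigma'\times[0,1]$ onto $\Sigma'\times\{0\}$ and then using $f^{-1}$ to identify this with the original boundary of $H$. The smooth structure on $f.H$ can be arranged in the usual way by smoothing the corner where the cylinder meets $H$; this is a routine application of the collar neighborhood theorem. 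Thus $\widetilde{f}$ is a diffeomorphism, and by construction it is orientation-preserving and respects the parametrizations of the embedded disks since those are disjoint from the gluing region.

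For the second step, the boundary of $f.H$ is the top of the attached cylinder, namely $\Sigma'\times\{1\}$, which inherits its parametrization from $\Sigma'$. Tracing a point $x\in\partial H=\Sigma$ through the pushout, $\widetilde{f}$ sends $x$ first to $(f(x),0)\in\Sigma'\times[0,1]$ and then, after the collar retraction identifying $\Sigma'\times[0,1]$ with $\Sigma'\times\{1\}$, to $f(x)$. Hence $\partial \widetilde{f}=f$ as claimed.

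For the third step, the lifting square is interpreted in the nerve of the groupoids $\Hbdy(T)$ and $\Surf(T)$: the top vertex $0$ maps to the object $H$, the edge $\Delta^1$ maps to the morphism $f:\Sigma\to\Sigma'$ in $\Surf(T)$, and commutativity of the left triangle demands that $\partial H=\Sigma$, which is our standing assumption. The proposed lift $\widetilde{f}$ is a morphism $H\to f.H$ in $\Hbdy(T)$ whose source is $H$ and whose image under $B\partial$ is the mapping class of $f:\Sigma\to\Sigma'$ by the previous step. Hence both triangles commute, and $\widetilde{f}$ solves the lifting problem. The only real subtlety, and the main thing to be careful about, is to verify that all constructions are compatible with the parametrizations of the embedded boundary disks of the handlebody; but this holds because the cylinder is glued away from those disks and $f$ is assumed to preserve the boundary parametrization.
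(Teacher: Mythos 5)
Your proof is correct and follows essentially the same route as the paper, whose proof of this lemma is simply "this follows immediately from the definition of $f.H$ and $\widetilde f$"; you just fill in the standard details (collar/corner smoothing, boundary restriction, interpretation of the square in the nerves). One small phrasing to tighten: the literal "retraction of $\Sigma'\times[0,1]$ onto $\Sigma'\times\{0\}$" is not injective, so the inverse diffeomorphism is better described by reparametrizing a collar of $\partial H$ inside $H$ so that it stretches over the attached cylinder — the standard collar argument you already invoke.
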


\begin{proof}
	This follows immediately from the definition of $f.H$ and $\widetilde f$.
\end{proof}

\begin{remark}
	In Lemma~\ref{lemmaisofibprop} and similar situations below, we will use the same symbol for a diffeomorphism and the associated mapping class in order to avoid clumsy notation. We will always make clear through the context what is meant. 
\end{remark}

\begin{proof}[{\slshape Proof of Proposition~\ref{propisofib}}]
	Lemma~\ref{lemmaisofibprop} implies that the boundary map is a fibration.
	The 
	homotopy fiber $\partial/\Sigma$
	of $\partial$ over $\Sigma$ is
	the groupoid of pairs $(H,f)$ consisting of a handlebody $H\in\Hbdy(T)$ plus a mapping class $f:\partial H\to \Sigma$.  
	Every object in $\partial / \Sigma$, up to isomorphism, is of the form $(H_0,f)$, where $H_0$ is one fixed handlebody with $\partial H_0\cong \Sigma$ and $f:\partial  H_0 \to \Sigma$ is any mapping class (this follows from the fact that two handlebodies with isomorphic boundary are isomorphic, although not isomorphic relative boundary of course). A morphism $(H_0,f)\to (H_0,f')$ is a mapping class $g:H_0\to H_0$ with $f' \partial g=f$ which implies $\partial g = (f')^{-1}f$. Since the mapping class group of a handlebody is a subgroup of the mapping class group of its boundary, $\partial$ is injective on morphisms. As a result, the automorphism groups of the groupoid $\partial/\Sigma$ are trivial, and $\pi_0(\partial/\Sigma)$ can be identified with the 
	set $\Map(\Sigma) / \Map(H_0)$
	of left cosets of $\Map(H_0)$ in $\Map(\Sigma)$.
\end{proof}


\begin{remark}\label{remisofibprop2}
	Let $f:\Sigma \to \Sigma'$ be a diffeomorphism.
	If $f = \partial g$ for a diffeomorphism $g:H\to H'$ of handlebodies, 
	there is a unique diffeomorphism $\xi_g : f.H\to H'$ with $\partial \xi_g = \id_{\Sigma'}$ and $\xi_g \circ \widetilde f = g$. 
	Indeed, we are forced to define $\xi_g := g \circ \left(\widetilde f\right)^{-1}$.
	Now $\partial \xi_g = \id_{\Sigma'}$ follows from Lemma~\ref{lemmaisofibprop}.
\end{remark}

\subsection{A concrete description of cyclic framed $E_2$-algebras with values in linear categories\label{exseccyclicframede2}}
There is an inexhaustible supply of examples of 
cyclic framed $E_2$-algebras or, equivalently, self-dual balanced braided algebras
coming from representation theory. This section will briefly name the main sources.
The reader only interested in the abstract treatment of modular functors may skip this subsection.

	If we specialize the symmetric monoidal
$(2,1)$-category $\smc$ in Theorem~\ref{thmmodext}
to be $\Fin$, the symmetric monoidal bicategory of
\begin{itemize}
	\item finite categories, i.e.\ $\k$-linear abelian categories with finite-dimensional morphism spaces, finitely many simple objects up to isomorphism, enough projective objects, finite length for every object (such categories are exactly the ones that are linearly equivalent to finite-dimensional modules over some algebra),
	
	\item left exact functors,
	
	\item and natural isomorphisms
	
\end{itemize}
with the Deligne product as monoidal product,
then each of the structures mentioned in Theorem~\ref{thmmodext} can equivalently be described as ribbon Grothendieck-Verdier categories in the sense of Boyarchenko and Drinfeld~\cite{bd} by the results of \cite{cyclic}: 
A \emph{Grothendieck-Verdier category} in $\Lexf$ is a monoidal category $\cat{A}\in\Lexf$ in which the functor $\cat{A}(K,X\otimes -)$ is representable by $DX\in \cat{A}$ for each $X$ (that means $\cat{A}(K,X\otimes-)\cong \cat{A}(DX,-)$)
such that the functor $D:\cat{A}\to\cat{A}^\op$ sending $X$ to $DX$
is an equivalence. One calls $D$ the \emph{duality functor}. It is very important to emphasize that this is not necessarily a rigid duality (we discuss the rigid case in Example~\ref{exfiniteribboncategories}). In fact even if $\cat{A}$ is rigid, then $D$ needs not coincide with the rigid duality.
A \emph{ribbon Grothendieck-Verdier category}\label{defgvpage} in $\Lexf$ comes additionally with \begin{itemize}
	\item a braiding, i.e.\
	isomorphisms $c_{X,Y}:X\otimes Y\to Y\otimes X$ satisfying the 
	hexagon axioms, \item and a balancing $\theta_X : X \to X$, i.e.\
a natural automorphism of the identity functor with \begin{itemize}\item $\theta_{X\otimes Y}=c_{Y,X}c_{X,Y}(\theta_X \otimes\theta_Y)$ for $X,Y \in \cat{A}$,
\item and $\theta_I=\id_I$ for the monoidal unit $I$ of $\cat{A}$,
\end{itemize} 
\end{itemize}
such that additionally the ribbon condition $\theta_{DX}=D\theta_X$ is satisfied for all
all $X\in\cat{A}$. 

We should mention that $\Lexf$, due to lack of (co)completeness,
is not suitable as a target category for factorization homology.
Instead,  one can use $\Rex$,\label{defrexpage}
the symmetric monoidal $(2,1)$-category of \begin{itemize}\item finitely cocomplete linear categories, \item finitely cocontinuous functors (that one calls right exact functors even though we are not necessarily dealing with abelian categories) \item and  natural isomorphisms. \end{itemize}
The monoidal product is the Deligne-Kelly product, see~\cite[Section~4]{Franco2013} for details.
The symmetric monoidal $(2,1)$-category $\Rex$ meets the technical requirements mentioned in Section~\ref{secprefh}
that allow us the define factorization homology with values in $\Rex$.

There is a symmetric monoidal sub-$(2,1)$-category $\Rexf$ of $\Rex$ spanned by all finite linear categories, as defined above as the objects of $\Lexf$. We can see $\Rexf$ as dual version of $\Lexf$. More precisely, the functor taking
the opposite category of a finite linear category provides us with a symmetric monoidal equivalence $\Lexf \simeq \Rexf$.
Therefore, any cyclic framed $E_2$-algebra in $\Lexf$ gives rise to a cyclic framed $E_2$-algebra in $\Rexf$ and hence in $\Rex$.

\begin{example}[Finite ribbon categories]\label{exfiniteribboncategories}
	A \emph{finite tensor category} in the sense of Etingof, Gelaki, Nikshych and Ostrik
	\cite{etingofostrik,egno}
	is a finite linear category with a monoidal product that is rigid, i.e.\ admits left and right duals, and has a simple monoidal unit.
	A \emph{finite ribbon category} $\cat{A}$ is a finite tensor category
	that comes with a braiding $c$, a balancing $\theta$ such that $\theta_{X^\vee} = \theta_X^\vee$, where $X^\vee$ is the dual object of $X\in\cat{A}$ (in finite ribbon categories, left and right duality coincide). 
One may obtain a finite ribbon category for example by taking finite-dimensional modules over a finite-dimensional ribbon Hopf algebra, see \cite[Section~XIV.3]{kassel} for a textbook reference.
Thanks to rigidity, the monoidal product of a finite tensor category is exact. Therefore, a finite ribbon category provides also an example of a cyclic framed $E_2$-algebra in $\Lexf$, $\Rexf$ or $\Rex$. 
For a braided category $\cat{A}$, one defines the Müger center $Z_2(\cat{A}) \subset \cat{A}$\label{defmueger} as
the full subcategory spanned by the \emph{transparent} objects, i.e.\ those objects
that trivially double braid with every other object.
A \emph{modular category} \cite{mose88,turaevmod,turaev} is a finite ribbon category whose Müger center 
contains just the monoidal unit and finite direct sums of it.
In this case, we call the braiding \emph{non-degenerate}. 
\end{example}

\begin{example}[Module categories of vertex operator algebras]\label{exvoa}
	A rich source for ribbon Grothen\-dieck-Verdier categories are vertex operator algebras. This is investigated by Allen, Lentner, Schweigert and Wood in \cite{alsw}, and this example is essentially a short summary of some of their results. 
	Presenting a detailed definition of vertex operator algebras and the construction of their categories of modules is  beyond the scope of this article. Instead, it will suffice for us to use the
	 connection between vertex operator algebras and ribbon Grothendieck-Verdier duality 
	  presented in \cite{alsw}, namely that suitable categories of modules over a $C_2$-cofinite vertex operator algebra form a ribbon Grothendieck-Verdier category. 
Let us discuss an example~\cite[Section~3]{alsw}
that can be described 
in relatively simple terms and that will be relevant for us later:	The \emph{Feigin-Fuchs boson}
is a ribbon Grothendieck-Verdier category or rather a class of ribbon Grothendieck-Verdier categories.
While the underlying monoidal category will still be rigid, the duality functor of the Grothendieck-Verdier structure will not necessarily coincide with the rigid duality.
The categories in question are built from \emph{bosonic lattice data}, i.e.\ a quadruple $\Psi = (\mathfrak{h},\spr{-,-},\Lambda,\xi)$,
where
$\mathfrak{h}$ is a finite-dimensional real vector space with non-degenerate symmetric real-valued bilinear form $\spr{-,-}$, a lattice $\Lambda \subset \mathfrak{h}$ (i.e.\ a discrete subgroup) that is even with respect to $\spr{-,-}$, and an element $\xi \in \Lambda ^* / \Lambda$, where $\Lambda^* := \{x\in\mathfrak{h} \, | \, \spr{x,\Lambda} \subset \mathbb{Z}\}$.
We will assume that $\Lambda$ has full rank. In this case, $G:= \Lambda^* / \Lambda$ is a finite abelian group.  
From bosonic lattice data $\Psi = (\mathfrak{h},\spr{-,-},\Lambda,\xi)$, one may construct a vertex operator algebra 
 whose category of modules $\catf{VM}(\Psi)$ is a ribbon Grothendieck-Verdier category over $\mathbb{C}$. The details on the vertex operator algebra 
 are given in \cite[Section~3.3]{alsw} and need not concern us here because there is a group-cohomological description \cite[Theorem~3.12]{alsw}:
A class $\omega \in H_\text{ab}^3(G;\mathbb{C}^\times)$ in the third abelian group cohomology of $G= \Lambda^* / \Lambda$ 
(we refer to \cite{eilenbergmaclane} for a background on abelian group cohomology)
determines on the category $\vect_G$ of finite-dimensional $G$-graded vector spaces over $\mathbb{C}$ a braided monoidal structure \cite[Section~8.4]{egno}
that we denote by $\vect_G^\omega$. By \cite[Theorem~4.2.2]{Zetsche} the choice of a group element $g_0 \in G$ with $g_0=2h_0$ for some $h_0\in G$ gives us in fact a ribbon Grothendieck-Verdier structure whose dualizing object is the ground field $\mathbb{C}_{g_0}$ seen as graded vector space supported in degree $g_0$. 
We denote the resulting
ribbon Grothendieck-Verdier category by $\vect_G^{\omega,g_0}$.
Now by \cite[Theorem~3.12]{alsw} we can construct from bosonic lattice data $\Psi = (\mathfrak{h},\spr{-,-},\Lambda,\xi)$ a class $\omega(\Psi)\in H_\text{ab}^3(G;\mathbb{C}^\times)$ such that
\begin{align}
\catf{VM}(\Psi) \simeq \vect_G^{\omega(\Psi),2\xi} \quad \text{with}\quad G=\Lambda^* / \Lambda \label{eqngrpcohom}
\end{align}
as ribbon Grothendieck-Verdier categories.
The Feigin-Fuchs boson forms a rigid monoidal category, but its Grothendieck-Verdier duality
will not necessarily coincide with the rigid duality. In fact, this is the case if and only if $\xi=0$. We should mention that the results of \cite{alsw} can also produce ribbon Grothendieck-Verdier categories whose monoidal product is not exact, so that the category cannot even be rigid in that case (rigidity would imply exactness). 
	\end{example}

\section{The definition of a modular functor and the moduli space of modular functors}

The purpose of this section is to give a sufficiently
general definition of a modular functor.
Having at our disposal the groupoid-valued surface operad $\Surf$
and the framework to define its modular $(2,1)$-categorical algebras, 
one might be tempted to define modular functors with values in a symmetric monoidal $(2,1)$-category $\smc$ as 
modular $\smc$-valued $\Surf$-algebras.
As already briefly explained in the introduction, this definition would be unsatisfactory as it is known  to exclude rich classes of examples, namely systems of \emph{projective} mapping class group representations.
Those can be included by allowing for more general extensions of mapping class groups.
The `projectiveness' of the mapping class group actions of a modular functor is sometimes referred to as \emph{anomaly}.
We refer to~\cite{gilmermasbaum} for a discussion about how and why this appears in the classical constructions of modular functors.

\subsection{The definition of a modular functor\label{secexthbdy}}	
Since we  would like to systematically treat all extensions at once, we need to develop a rather general theory of extensions
for modular operads, and more specifically the surface operad. 
Fortunately, a formalism to treat extensions of groupoids is available 	in~\cite{bbf04}, 
where the classical extension theory using Dedecker-Schreier cocycles~\cite{schreier,dedecker} is generalized to groupoids.
 We can easily adapt this to groupoid-valued modular operads.

\begin{definition}\label{defextension}
	Let $\cat{O}$ be a modular groupoid-valued operad.
	An extension of $\cat{O}$ is a
	map $q : \cat{P}\to\cat{O}$ of groupoid-valued modular operads such that for any $o\in \cat{O}(T)$ and any $T\in\Graphs$ 
	the homotopy fiber $\cat{P}_o$ of $q_T:\cat{P}(T)\to \cat{O}(T)$ over $o\in\cat{O}(T)$
	is connected. A map between extensions is a map in the slice category $\catf{ModOp}(\Grpd) / \Surf$.
\end{definition}

As in this definition,
we will throughout
denote the homotopy fiber of $\cat{P}$
over $o$ 
by $\cat{P}_o$.
For an extension $q:\cat{P}\to\cat{Q}$, any $o\in \cat{O}(T)$ and any $p\in \cat{P}_o$, we obtain a short exact sequence of groups
\begin{align}
	1\to \Aut_{\cat{P}_o}(p) \to \Aut_{\cat{P}(T)}(p)\to \Aut_{\cat{O}(T)}(o)\to 1\   ; \label{eqnsesext}
\end{align} 
here we see $p$ also as object in $\cat{P}(T)$ by applying the forgetful map $\cat{P}_o \to \cat{P}(T)$, but we use the same symbol by a slight abuse of notation.
The short exact sequence~\eqref{eqnsesext}
is the only non-trivial part of the long exact sequence of homotopy groups for the fiber sequence
$|B\cat{P}_o|\to |B\cat{P}(T)|\to |B\cat{O}(T)|$
($B$ is the nerve of a category; $|-|$ is the geometric realization of a simplicial set).
We then say that \emph{at $o\in\cat{O}$ the extension is by the group $\Aut_{\cat{P}_o}(p)$}.
Definition~\ref{defextension}
can be formulated for cyclic operads as well.

For the definition of a modular functor, 
we will have to further refine the notion of an extension. 
This will mean the following:
\begin{itemize}
	\item
	We would like to consider extensions of 
	$\Surf$ that are relative to genus zero (i.e.\ come with a 
	section over the genus zero part of $\Surf$). Requirements in this direction are already implicit in the discussion of the framing anomaly in \cite[IV.3.5]{turaev}, see also the operadic discussion of this point in \cite[Lemma~7.15]{cyclic}. 
	
	\item We would like the extension term, i.e.\
	the fiber of the extension, to satisfy some very mild locality properties. Such 
	requirements are standard in the theory of modular functors, see e.g.~\cite[Section~3.2]{jfcs} for a discussion in a slightly different mathematical language. 
\end{itemize}
We will now make such requirements precise in our framework. To this end, we need
a few rather technical definitions:
For an extension $q:\cat{Q}\to \Surf$ of $\Surf$,
we denote by $q_0 : \cat{Q}_0 \to \Surf_0$ the extension of cyclic operads obtained after genus zero restriction. 

An \emph{extension $(\cat{Q},s_0)$ of $\Surf$ relative to genus zero} is an extension $q: \cat{Q}\to\Surf$ and a section of $q$ over $\Surf_0$, i.e.\ a map $s_0:\Surf_0 \to  \cat{Q}_0$ of extensions of $\Surf_0$. Here $\Surf_0$ is seen as a `trivial' extension over itself. Extensions of $\Surf$ relative to genus zero form naturally a bicategory.
Since $\Hbdy$, as $\Grpd$-valued modular operad, is the derived modular envelope of $\Surf_0$, as we know from \cite{giansiracusa} and the additions in \cite{mwansular}, the section $s_0$ can be equivalently be described 
as a 
section of $q$ over $\Hbdy$, i.e.\ a map $s:\Hbdy \to \cat{Q}$ of extensions of $\Hbdy$. Again, $\Hbdy$ is seen as a `trivial' extension over itself and $\cat{Q}$ is seen as extension of $\Hbdy$ whose homotopy fiber over $H$ is $\cat{Q}_{\partial H}$. In other words, we see $\cat{Q}$ as extension of 
$\Hbdy$
by homotopy pullback along $\partial : \Hbdy \to \Surf$. One gets this way for any choice of a handlebody $H$ a distinguished `basepoint' in the fiber over $\partial H$.
	The homotopy fiber $\cat{Q}_{\Sigma}$ of any genus zero surface comes with a pointing, namely $s_0(\Sigma)\in  \cat{Q}_{\Sigma}$. 
	This implies that 
	for any oriented embedding $\varphi : (\mathbb{D}^2)^{\sqcup J} \to \Sigma$, for any surface $\Sigma$, we have a functor
	\begin{align}\label{eqnlambdafunctor}
		\lambda_\varphi : 	\cat{Q}_{\Sigma \setminus\mathring{\operatorname{im}}\, \varphi} \ra{  \substack{ s_0(\Sigma     \setminus\mathring{\operatorname{im}}\, \varphi       )
				\times \id }}	\cat{Q}_{ (\mathbb{D}^2)^{\sqcup J}    } \times \cat{Q}_{\Sigma \setminus \mathring{\operatorname{im}}\, \varphi}  \ra{\substack{\text{operadic} \\  \text{composition}}} \cat{Q}_   \Sigma \ ,
	\end{align}
	where $\mathring{\operatorname{im}}\, \varphi$ is the interior of the image of $\varphi$.
	We say that $(\cat{Q},s)$ 
	\emph{admits insertions of vacua} if  for  
	any surface
	and any oriented embedding the functor~\eqref{eqnlambdafunctor}
	is an equivalence. 

\begin{definition}\label{deflambdaequiv} 
	We define the \emph{bicategory $\ExtSurf$ of extensions of $\Surf$ relative to genus zero that admit insertions of vacua} as the full subbicategory
	of the extensions of $\Surf$ relative to genus zero
	spanned 
	by those objects that admit insertions of vacua.
\end{definition}

This finally gives us the class of `reasonable' extensions of $\Surf$ that we would like to consider: They are extensions relative genus zero, and they satisfy a rather mild locality property in the sense that they are compatible with the insertion of disks.  

\begin{remark}
	The `usual' extensions of mapping class groups showing up in modular functors constructed from modular categories lead of course to an extension in the sense of Definition~\ref{deflambdaequiv}. Let us emphasize that there are several variants of that construction which strictly speaking are not isomorphic (see~\cite{masbaumroberts} and~\cite[Section~7.5]{Schommer-Pries2017} for a comparison). Definition~\ref{defmoduli} is made in such a way that those still all lead to equivalent modular functors in our sense. For a surface $\Sigma$, let $\Omega(\Sigma)$ be the fundamental groupoid of the space of Lagrangian subspaces in the first real cohomology group of the closure of $\Sigma$. This space is well-known to be connected with fundamental group isomorphic to $\mathbb{Z}$, and one way to construct a version of that extension is by applying the construction of Section~\ref{secsurfa} to this collection of groupoids. This extension admits insertion of vacua by construction. Every handlebody $H$ bounding $\Sigma$ defines a point in $\Omega(\Sigma)$ by taking the image of the first cohomology group of the closure of $H$, which is well-known to be a Lagrangian subspace.
\end{remark}

\begin{lemma}\label{lemma2morphinv}
	The 2-morphisms in the bicategory $\ExtSurf$ are invertible, thereby making
	$\ExtSurf$ a $(2,1)$-category. In other words,  the morphism categories $\Map_{   \ExtSurf   }(-,-)$ in $\ExtSurf$ are groupoids. 
\end{lemma}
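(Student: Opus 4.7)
The plan is to trace invertibility of 2-morphisms back to the basic bicategory $\catf{ModOp(Grpd)}$ of groupoid-valued modular operads, and then check that it survives the slice and homotopy pullback constructions used to define $\ExtSurf$. First I would observe that $\catf{ModOp(Grpd)}$ is itself a $(2,1)$-category: a 1-morphism between two such operads is a (symmetric monoidal) pseudonatural transformation of functors $\Graphs\to\Cat$ taking values in $\Grpd\subset \Cat$, and a 2-morphism is a modification whose components are natural transformations between functors of groupoids. Any such component is pointwise a morphism in a groupoid, hence automatically an isomorphism, so every 2-morphism is invertible.

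Next I would note that forming a slice $\cat{B}/X$ of a $(2,1)$-category $\cat{B}$ yields again a $(2,1)$-category: 2-morphisms in the slice are 2-morphisms of $\cat{B}$ subject to a compatibility condition with the structure maps to $X$, and their inverses in $\cat{B}$ satisfy the inverse compatibility condition. Applying this to $X=\Surf$ and $X=\Hbdy$, both $\catf{ModOp(Grpd)}/\Surf$ and $\catf{ModOp(Grpd)}/\Hbdy$ are $(2,1)$-categories. The same argument applied to the double slice $\id_\Hbdy/(\catf{ModOp(Grpd)}/\Hbdy)$ shows that it, too, is a $(2,1)$-category, and the full subbicategory $\catf{Triv}/\Hbdy$ cut out by the extension and triviality conditions inherits this property.

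Finally, the homotopy pullback~\eqref{eqnhomotopypullback} of $(2,1)$-categories along pseudofunctors is still a $(2,1)$-category: a 2-morphism in the pullback is the data of compatible 2-morphisms in each factor (together with a 2-cell witnessing compatibility under the equivalence $\chi$), and inverting each of these invertible 2-morphisms separately yields a compatible inverse in the pullback. Being a full subbicategory of this pullback, $\ExtSurf$ inherits invertibility of all its 2-morphisms, which is the claim.

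There is no real obstacle here; the content is essentially bookkeeping, and the only point requiring care is to verify that the compatibility data witnessing membership in the homotopy pullback behave well under inversion, which is routine because all the pieces live in a $(2,1)$-categorical setting from the outset.
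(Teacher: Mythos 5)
Your argument is correct, but it takes a genuinely different route from the paper. You reduce everything to the observation that all the operads in sight are \emph{groupoid}-valued: a 2-morphism in $\catf{ModOp(Grpd)}$ is a (symmetric monoidal) modification whose components at each $T\in\Graphs$ are natural transformations landing in the groupoid $\cat{Q}'(T)$, hence automatically invertible, and invertibility survives the coslice, the homotopy pullback~\eqref{eqnhomotopypullback} and the passage to the full subbicategory, since in all of these the 2-cells are just 2-cells of the factors subject to equations (one small imprecision: compatibility of a pair of 2-cells under the equivalence $\chi$ is a \emph{condition}, not extra 2-cell data as your parenthetical suggests, but this does not affect the argument). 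The paper instead argues intrinsically inside $\ExtSurf$: a 1-morphism $\alpha_i$ is compatible with the canonical sections, $\alpha_i s\cong s'$ via $\beta_i$, a 2-morphism $\gamma$ must satisfy $\beta_{1,H}\gamma_{s(H)}=\beta_{0,H}$, so its components at objects in the image of $s$ are invertible, and arity-wise essential surjectivity of $s$ (connectedness of the homotopy fibers of $\cat{Q}\to\Surf$) plus naturality propagates this to all components. Your proof is more elementary and proves more — it shows the entire homotopy pullback, without the insertion-of-vacua or connectedness conditions, is already a $(2,1)$-category. The paper's proof buys robustness instead: it never uses that the \emph{target} operad is groupoid-valued (only that the source is, via invertibility of the comparison isomorphisms $u$ used in the naturality step), so it would persist in a variant with merely $\Cat$-valued extensions, and it isolates the mechanism — compatibility with the canonical section $s:\Hbdy\to\cat{Q}$ together with its essential surjectivity — that is reused later, notably in the proof of Theorem~\ref{thmuniversal}.
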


\begin{proof}
	Let $\cat{Q}, \cat{Q}' \in \ExtSurf$ and let $\alpha_0 , \alpha_1  : \cat{Q}\to\cat{Q}'$ be 1-morphisms. Denote by $s:\Hbdy \to \cat{Q}$ and $s' : \Hbdy \to \cat{Q}'$ the respective sections. After unpacking the definitions, it follows from $\alpha_i : \cat{Q}\to\cat{Q}'$ being a 1-morphism in $\ExtSurf$ that $ \alpha_is \stackrel{\beta_i}{\cong} s'$ by a canonical isomorphism for $i=0,1$. Now if $\gamma : \alpha_0\to \alpha_1$ is a 2-morphism in $\ExtSurf$, then we have in particular $\beta_{1,H}\gamma_{s(H)}=\beta_{0,H}$ for any handlebody. This tells us that all components of $\gamma$ that lie in the image of $s: \Hbdy \to \cat{Q}$ are invertible.
	But $s$ is arity-wise essentially surjective (because the homotopy fibers of $\cat{Q}$ over $\Surf$ are connected). Therefore, it follows that \emph{all} components of $\gamma$ are invertible.
\end{proof}

We have now achieved the main goal of this section and are finally 
in a position to 
define modular functors:

\begin{definition}\label{defmodularfunctor}
	A \emph{modular functor} is a pair $(\cat{Q},\cat{B})$, where 
	$\cat{Q} \in \ExtSurf$ is an extension of $\Surf$ in the sense of Definition~\ref{deflambdaequiv}
	(i.e.\ it is implied that the extension is relative $\Hbdy$ and admits insertions of vacua),
	and $\cat{B}$ is a modular $\cat{Q}$-algebra.
\end{definition}

\subsection{The extension problem for cyclic framed $E_2$-algebras and the moduli space of modular functors}
In the next step, we will, using Definition~\ref{defmodularfunctor} as a starting point, define a \emph{moduli space of modular functors}.
To this end, we will define some auxiliary notions that will be convenient later.

Given an extension $\cat{Q}$ of $\Surf$ relative $\Hbdy$ and a cyclic framed $E_2$-algebra, it is natural to ask 
about a description of the space of modular $\cat{Q}$-algebras that extend $\cat{A}$.

\begin{definition}[Extensions of a cyclic framed $E_2$-algebra over a fixed extension of $\Surf$]\label{extofsurfdef}
	For a cyclic framed $E_2$-algebra  $\cat{A}$ in $\smc$ and $\cat{Q}\in \ExtSurf$, we define the 
	groupoid $\Ext(\cat{A};\cat{Q})$ of
	\emph{extensions of $\cat{A}$ on $\cat{Q}$} as 
	the homotopy fiber of
	\begin{align}
		\catf{ModAlg}(\cat{Q})\ra{\substack{\text{restriction along}\\  \text{the section $\Hbdy \to \cat{Q}$}}} 	\catf{ModAlg}(\Hbdy)  
		\ra{\substack{\text{genus zero}\\  \text{restriction}}}
		\catf{CycAlg}(\framed) 
	\end{align} over $\cat{A}$.
\end{definition}

\begin{remark}
	Cyclic and modular algebras with values in a symmetric monoidal bicategory form 2-groupoids \cite[Proposition~2.18]{cyclic}.
	Therefore, $\Ext(\cat{A};\cat{Q})$,
	as  a homotopy fiber of a map between 2-groupoids,
	is a priori a 2-groupoid itself.
	However, from the definition of 1-morphisms and 2-morphisms of cyclic and modular algebras \cite[Section~2.4]{cyclic}
	and the definition of the homotopy fiber, it follows that  all 2-automorphisms of $\Ext(\cat{A};\cat{Q})$
	are actually trivial. For this reason, we will  see  $\Ext(\cat{A};\cat{Q})$ as as groupoid. 
\end{remark}

\begin{definition}[All extensions of a cyclic framed $E_2$-algebra]\label{defallext}
	For a cyclic framed $E_2$-algebra  $\cat{A}$ in $\smc$, we define the bicategory $\Ext(\cat{A})$
	of \emph{all extensions of $\cat{A}$} as the bicategory of pairs
	$(\cat{Q},\cat{B})$ formed by $\cat{Q} \in \ExtSurf$ and $\cat{B} \in \Ext(\cat{A};\cat{Q})$.
	A 1-morphism
	$(\cat{Q},\cat{B})\to(\cat{Q}',\cat{B}')$ is a 1-morphism $\psi : \cat{Q}\to\cat{Q}'$ in $\ExtSurf$ together with a map $\alpha : \cat{B} \to \psi^* \cat{B}'$ in
	$\Ext(\cat{A};\cat{Q})$.
	The 2-morphisms are defined in a similar way.
\end{definition}

We can now define the moduli space of modular functors:

\begin{definition}
	We define the
	\emph{bicategory of modular functors}  $\MF $
	as the bicategory formed by pairs $(\cat{Q},\cat{B})$, where $\cat{Q}\in  \ExtSurf$ 
	and $\cat{B} \in \catf{ModAlg}(\cat{Q})$.
	A 1-morphism
	$(\cat{Q},\cat{B})\to(\cat{Q}',\cat{B}')$ is a 1-morphism $\psi : \cat{Q}\to\cat{Q}'$ in $\ExtSurf$ together with a map $\alpha : \cat{B} \to \psi^* \cat{B}'$ in
	$\Ext(\cat{A};\cat{Q})$, where $\cat{A}$ is the cyclic framed $E_2$-algebra obtained from $\cat{B}$ via genus zero restriction.
	The 2-morphisms are defined in a similar way.
\end{definition}

By taking nerves of morphism categories we may see a bicategory as a simplicial category to which we can assign its homotopy coherent nerve, see e.g.\
\cite[Section~16.3]{riehl}.
This allows us to speak about the nerve $B\cat{C}$ of a bicategory $\cat{C}$.
This version of the nerve is also called the \emph{Duskin nerve} \cite{duskin}. 
\begin{definition}\label{defmoduli}	We define the \emph{moduli space of modular functors} as the geometric realization $ \moduli:=|B\MF|$ of the Duskin nerve of $\MF$.
\end{definition}

\begin{remark}\label{remarkmodulispace}
	One may ask why one should consider the realization $|B\MF|$ of the nerve of $\MF$ as the moduli space of modular functors and not $\MF$, as plain bicategory. The reason is the following: Suppose we are given $(\cat{Q},\cat{B}),(\cat{Q}',\cat{B}')$ in $\MF$. A 1-morphism
	$(\cat{Q},\cat{B})\to(\cat{Q}',\cat{B}')$ in $\MF$
	is a map
	$\alpha : \cat{Q}\to\cat{Q}'$ of extensions of $\Surf$
	relative to $\Hbdy$ plus an equivalence 
	$\phi:\alpha^*\cat{B}'\simeq \cat{B}$
	of modular $\cat{Q}$-algebras. 
	In other words,
	$\cat{B}$  factors through $\alpha$ and produces $\cat{B}'$.
	By definition the map $\alpha$ is over $\Surf$; it just changes the fibers of the extensions.
	Spelled out in more detail, this means that for a surface $\Sigma$ the modular algebra $\cat{B}$ produces an action of an extension $\pi:G_{\cat{Q},\Sigma}\to\Map(\Sigma)$ on $\cat{B}(\Sigma)$ while
	$\cat{B}'$ produces an action of an extension $\pi':G_{\cat{Q}',\Sigma}
	\to \Map(\Sigma)$ on $\cat{B}'(\Sigma)$. 
	The map $\alpha : \cat{Q}\to\cat{Q}'$, when evaluated at $\Sigma$, gives us a group morphism $\alpha_\Sigma : G_{\cat{Q},\Sigma}\to 	G_{\cat{Q}',\Sigma}$ with $\pi' \alpha_\Sigma = \pi$, and  
	$\phi:\alpha^*\cat{B}'\simeq \cat{B}$ provides an isomorphism
	$\phi_\Sigma : 	\alpha_\Sigma^* \cat{B}'(\Sigma) \cong \cat{B}(\Sigma)$ of $G_{\cat{Q},\Sigma}$-representations.
	Then we want to regard $(\cat{Q},\cat{B})$ and $(\cat{Q}', \cat{B}')$ as the \emph{same} modular functor because $\cat{B}$ and $\cat{B'}$ both factor through $\cat{Q}'$ and agree as $\cat{Q}'$-algebras. But these pairs are only identified through a 1-isomorphism
	in $\moduli=|B\MF|$, and not $\MF$. In general, two objects in $\moduli$ are isomorphic if they are connected by a zigzag of such maps.
\end{remark}

	\section{Ansular functors and handlebody skein modules\label{modenvfhsec}}
This section is devoted to our first result, a connection between the ansular functor associated to a framed cyclic $E_2$-algebra and factorization homology. 
	The connection will be through handlebody skein modules that we define in Section~\ref{sechandlebodyskeinmodules}. For the convenience of the reader, we explain in Section~\ref{secskeinalgmod} the relation between our definition and the traditional definition of skein modules associated with semisimple ribbon categories.
	
	\subsection{The handlebody skein modules for a cyclic framed $E_2$-algebra\label{sechandlebodyskeinmodules}}
Let $\cat{A}$ be a cyclic framed $E_2$-algebra in $\smc$, $\Sigma\in\Surf(T)$ for $T\in \Graphs$ (we assume for the moment that $T$ is a corolla) and
$\varphi : \sqcup_J \mathbb{D}^2 \to \Sigma$
an oriented embedding ($J$ is a finite set).
If $H\in \Hbdy(T)$ is a handlebody with $\partial H=\Sigma$, then the boundary components of $\Sigma$ induce embedded disks in
$\partial H$. Additionally, we obtain embedded disks in $\partial H$ from $\varphi$. In total, we obtain 
a handlebody with $|J|+|\Legs(T )|$ embedded disks. We denote this handlebody by $H^\varphi$, see Figure~\ref{FPhimap}.\label{Hvarphipage}

 \begin{figure}[h]
	\centering
	\def\svgwidth{0.6\textwidth}
	\begingroup%
	\makeatletter%
	\providecommand\color[2][]{%
		\errmessage{(Inkscape) Color is used for the text in Inkscape, but the package 'color.sty' is not loaded}%
		\renewcommand\color[2][]{}%
	}%
	\providecommand\transparent[1]{%
		\errmessage{(Inkscape) Transparency is used (non-zero) for the text in Inkscape, but the package 'transparent.sty' is not loaded}%
		\renewcommand\transparent[1]{}%
	}%
	\providecommand\rotatebox[2]{#2}%
	\newcommand*\fsize{\dimexpr\f@size pt\relax}%
	\newcommand*\lineheight[1]{\fontsize{\fsize}{#1\fsize}\selectfont}%
	\ifx\svgwidth\undefined%
	\setlength{\unitlength}{526.82145438bp}%
	\ifx\svgscale\undefined%
	\relax%
	\else%
	\setlength{\unitlength}{\unitlength * \real{\svgscale}}%
	\fi%
	\else%
	\setlength{\unitlength}{\svgwidth}%
	\fi%
	\global\let\svgwidth\undefined%
	\global\let\svgscale\undefined%
	\makeatother%
	\begin{picture}(1,0.6939801)%
		\lineheight{1}%
		\setlength\tabcolsep{0pt}%
		\put(0,0){\includegraphics[width=\unitlength,page=1]{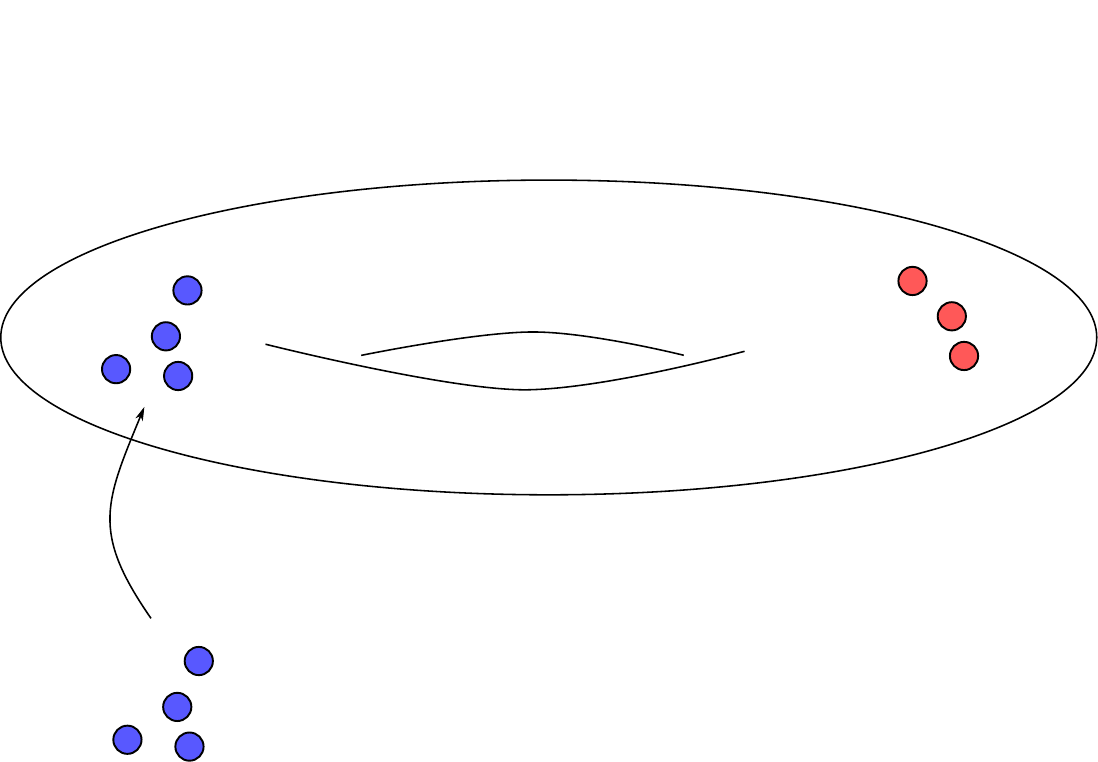}}%
		\put(0.36803527,0.29475036){\color[rgb]{0,0,0}\makebox(0,0)[lt]{\lineheight{1.25}\smash{\begin{tabular}[t]{l}$H$\end{tabular}}}}%
		\put(0.15876131,0.20007884){\color[rgb]{0,0,0}\makebox(0,0)[lt]{\lineheight{1.25}\smash{\begin{tabular}[t]{l}$\varphi : \sqcup_J \mathbb{D}^2 \to \Sigma = \partial H$\end{tabular}}}}%
		\put(0,0){\includegraphics[width=\unitlength,page=2]{Phimap.pdf}}%
		\put(0.36254829,0.6795584){\color[rgb]{0,0,0}\makebox(0,0)[lt]{\lineheight{1.25}\smash{\begin{tabular}[t]{l}boundary components of $\Sigma$ / embedded disks of $H$\end{tabular}}}}%
		\put(0.16030712,0.15542355){\color[rgb]{0,0,0}\makebox(0,0)[lt]{\lineheight{1.25}\smash{\begin{tabular}[t]{l}disks embedded via $\varphi$ become `additional' disks in the boundary of $H$\end{tabular}}}}%
	\end{picture}%
	\endgroup%
	\caption{On the definition of $H^\varphi$.}
	\label{FPhimap}
\end{figure}

The evaluation of the modular extension $\widehat{\cat{A}}$ 
(see Theorem~\ref{thmmodext} and the remarks afterwards)
 on $H^\varphi$ 
 yields a map $I\to \cat{A}^{\otimes\Legs(T )}\otimes \cat{A}^{\otimes J}$. Since $\cat{A}$ is a cyclic algebra, we can use the cyclic structure to obtain a map
\begin{align}
\widehat{\cat{A}}(H^\varphi):	\cat{A}^{\otimes J}\to\cat{A}^{\otimes\Legs(T )} \ .      \label{eqnfunctormodenv}
\end{align}

\begin{proposition}\label{propenvfh}
	Let $\cat{A}$ be a cyclic framed $E_2$-algebra in $\smc$, $T\in \Graphs$ a corolla,
	 $H\in\Hbdy(T)$ and $\Sigma=\partial H$.
	The maps $\widehat{\cat{A}}(H^\varphi):	\cat{A}^{\otimes J}\to\cat{A}^{\otimes\Legs(T )}$
	from~\eqref{eqnfunctormodenv}
	are natural in the embedding $\varphi: \sqcup_J \mathbb{D}^2 \to \Sigma$\label{propenvfh0}
	and descend to the factorization homology $\int_\Sigma \cat{A}$, inducing a map 
	\begin{align}
		\PhiA(H): \int_\Sigma\cat{A} \to \cat{A}^{\otimes\Legs(T )} 	 \ .
	\end{align}		
\end{proposition}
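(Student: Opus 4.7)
The plan is to exhibit the claimed map directly from the universal property of factorization homology as a homotopy colimit: since
\[
\int_\Sigma \cat{A} = \hocolimsub{\varphi : (\mathbb{D}^2)^{\sqcup J} \to \Sigma} \cat{A}^{\otimes J} \ ,
\]
a 1-morphism $\PhiA(H):\int_\Sigma\cat{A}\to\cat{A}^{\otimes\Legs(T)}$ is precisely a cocone under this diagram, i.e.\ a family of morphisms $\widehat{\cat{A}}(H^\varphi)$ coherently natural in $\varphi$. Hence the entire content of the statement is the claim that the given family assembles into such a cocone. I will first construct the $\widehat{\cat{A}}(H^\varphi)$ as presented in the excerpt (applying the cyclic structure to turn the value $\widehat{\cat{A}}(H^\varphi):I\to\cat{A}^{\otimes\Legs(T)}\otimes\cat{A}^{\otimes J}$ into a map $\cat{A}^{\otimes J}\to\cat{A}^{\otimes\Legs(T)}$), and then verify the coherent naturality.

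The core geometric observation is that the thickening construction $\varphi\mapsto H^\varphi$ is operadically multiplicative. More precisely, given a morphism in the diagram, i.e.\ an embedding $\iota:(\mathbb{D}^2)^{\sqcup J}\hookrightarrow (\mathbb{D}^2)^{\sqcup J'}$ together with a factorization $\varphi=\varphi'\circ\iota$ for some $\varphi':(\mathbb{D}^2)^{\sqcup J'}\to \Sigma$, the embedding $\iota$ represents a genus zero handlebody operation $h_\iota\in \framed(J'+J)\subset\Hbdy$ (the thickening of $\iota$). One checks directly from the definition of $H^\varphi$ (gluing small handle-balls onto $H$ along the images of the embedded disks) that
\[
H^\varphi \;\cong\; H^{\varphi'}\circ_{J'} h_\iota
\]
as an identity of operadic compositions in $\Hbdy$, performed along the $J'$-indexed family of disks. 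Since $\widehat{\cat{A}}$ is a morphism of modular operads (up to coherent isomorphism), this identity of compositions is transported to a canonical isomorphism between $\widehat{\cat{A}}(H^\varphi)$ and $\widehat{\cat{A}}(H^{\varphi'})$ precomposed with $\widehat{\cat{A}}(h_\iota)$. By Theorem~\ref{thmmodext}, the restriction of $\widehat{\cat{A}}$ to the genus zero suboperad $\framed$ recovers the cyclic framed $E_2$-algebra structure of $\cat{A}$; therefore $\widehat{\cat{A}}(h_\iota):\cat{A}^{\otimes J}\to\cat{A}^{\otimes J'}$ is exactly the structure map used in the colimit diagram defining $\int_\Sigma \cat{A}$. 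This is precisely the naturality statement required.

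The remaining step is to upgrade these isomorphisms to a coherent cocone in the $(2,1)$-categorical sense. This amounts to verifying that the isomorphisms $\widehat{\cat{A}}(H^\varphi)\cong \widehat{\cat{A}}(H^{\varphi'})\circ \widehat{\cat{A}}(h_\iota)$ are compatible with composable triples $\iota,\iota',$ which follows from the associativity coherence in the definition of $\widehat{\cat{A}}$ as an algebra over the modular operad $\Hbdy$, and that the cyclic reinterpretation of $\widehat{\cat{A}}(H^\varphi)$ as a 1-morphism $\cat{A}^{\otimes J}\to \cat{A}^{\otimes \Legs(T)}$ is natural in $\varphi$ since the cyclic action only acts on the $\Legs(T)$-labelled disks, which are disjoint from those labelled by $J$.

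The main technical obstacle is keeping the bicategorical coherence data honest: all of the identities above hold only up to specified 2-isomorphisms, and one must ensure that the resulting coherence data satisfies the conditions to give a genuine homotopy cocone (as opposed to just a pointwise family of morphisms). Everything one needs, however, is packaged in the fact that $\widehat{\cat{A}}$ is a symmetric monoidal pseudofunctor $\Graphs\to\smc$ with cyclic structure, so these coherences come from pulling back the structural 2-isomorphisms of $\widehat{\cat{A}}$ along the operadic identities $H^\varphi\cong H^{\varphi'}\circ_{J'} h_\iota$. Once this is set up, the universal property of the homotopy colimit~\eqref{fheqncolim} produces the desired 1-morphism $\PhiA(H)$.
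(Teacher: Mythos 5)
Your proposal is correct and follows essentially the same route as the paper: both verify the co-cone condition by identifying $H^{\varphi}$ for a composed embedding with an operadic gluing of $H^{\varphi'}$ with a genus zero handlebody (the paper writes it as $H^{\varphi\circ_i\varphi'}\cong H^\varphi \cup_i \bigl(([0,1]\times\mathbb{D}^2)^{\sqcup(j-1)}\sqcup\mathbb{B}^3_{n,1}\sqcup([0,1]\times\mathbb{D}^2)^{\sqcup(n-j)}\bigr)$, your $h_\iota$), and then use that $\widehat{\cat{A}}$ is a modular algebra extending $\cat{A}$, so the genus zero piece acts by exactly the structure maps of the colimit diagram.
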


We call $\PhiA(H)$ the \emph{handlebody skein module} for the cyclic framed $E_2$-algebra $\cat{A}$ and the handlebody $H$, or just the \emph{$\Phi$-map} for $\cat{A}$ and $H$.

\begin{proof} 
	In order for the maps~\eqref{eqnfunctormodenv} to descend to $\int_\Sigma \cat{A}$, we need to show that they provide a coherent 
	co-cone; this is because of the homotopy colimit description of factorization homology recalled in~\eqref{fheqncolim}.
	To this end, let $J=\{1,\dots,n\}$ 
		and $J'=\{1,\dots,m\}$ (here $n$ or $m$ can be zero in which case $J=\emptyset$ or $J'=\emptyset$, respectively); moreover, let
		$\varphi : \sqcup_J \mathbb{D}^2 = (\mathbb{D}^2)^{\sqcup n}\to \Sigma$ be an oriented embedding and $\varphi' : \sqcup_{J'} \mathbb{D}^2 = (\mathbb{D}^2)^{\sqcup m} \to \mathbb{D}^2$ an $\framed$-operation.
		The operation $\varphi'$ gives rise to a morphism $\cat{A}(\varphi'):\cat{A}^{\otimes m}\to \cat{A}$. It suffices now to prove that for the $i$-th partial composition $\varphi\circ_i\varphi'$ the triangle 
		\begin{equation}\label{propenvfh2e}
			\begin{tikzcd}
				&	\cat{A}^{\otimes(j-1)} \otimes \cat{A}^{\otimes m} \otimes \cat{A}^{\otimes (n-j)}	 \ar[rrrd,"\widehat{\cat{A}}(H^{\varphi\circ_i\varphi'})"] \ar[dd,"\cat{A}^{\otimes (j-1)} \otimes \cat{A}(\varphi')\otimes \cat{A}^{\otimes (n-j)}  ",swap]  \\ &&&& \cat{A}^{\otimes \Legs(T )} \ ,   \\ 
				&		\cat{A} 	^{\otimes n} \ar[rrru,"\widehat{\cat{A}} (H^\varphi)", swap] 
			\end{tikzcd}
		\end{equation}
		commutes up to coherent isomorphism. This isomorphism is obtained as follows: Observe that $\cat{A}^{\otimes (j-1)} \otimes \cat{A}(\varphi')\otimes \cat{A}^{\otimes (n-j)}$ can be written up to canonical isomorphism as
		\begin{align}
			\cat{A}^{\otimes (j-1)} \otimes \cat{A}(\varphi')\otimes \cat{A}^{\otimes (n-j)} &= \widehat{\cat{A}} \left(   ([0,1] \times\mathbb{D}^2)^{\sqcup (j-1)} \sqcup \mathbb{B}^3_{n,1} \sqcup  ([0,1] \times\mathbb{D}^2)^{\sqcup (n-j)}  \right) \ ,\end{align}
		where $\mathbb{B}^3_{n,1}$ is a closed three-dimensional ball with $n$ incoming
		and one outgoing embedded disks
		in its boundary; this is 
		because the modular algebra $\widehat{\cat{A}}$ extends the cyclic algebra $\cat{A}$. 
		The statement follows now from the canonical isomorphism
		\begin{align}
			H^{\varphi\circ_i\varphi'}\cong
			H^\varphi \cup_i \left(   ([0,1] \times\mathbb{D}^2)^{\sqcup (j-1)} \sqcup \mathbb{B}^3_{n,1} \sqcup  ([0,1] \times\mathbb{D}^2)^{\sqcup (n-j)}  \right)
		\end{align}
		and the fact that $\widehat{\cat{A}}$ is a modular algebra and hence respects gluing.
\end{proof}

We now obtain a 
factorization homology description of the ansular functor.
The key ingredient, namely the $\Phi$-maps,
 have already been established.
The rest is relatively formal:
	Since $\int_\Sigma \cat{A}$ is the homotopy colimit
of $\cat{A}^{\otimes J}$ running over all embeddings $\varphi : \sqcup_J \mathbb{D}^2 \to \Sigma$,
there is for any oriented embedding $\varphi : \sqcup_J \mathbb{D}^2 \to \Sigma$ a canonical map
$\cat{A}^{\otimes J} \to \int_\Sigma \cat{A} $ that by abuse of notation we will again denote by $\varphi$.
In the case $J=\emptyset$, we have the unique embedding $\varphi_0 : \emptyset \to \Sigma$.
We consider the following diagram:
\begin{equation}
	\begin{tikzcd}
		I \ar[rrrr,"\cat{O}_\Sigma^\cat{A}"] \ar[rrrrrrrr,bend right = 90,swap,"\widehat{\cat{A}} (H)"]\ar[rrrrd,swap,"\id_I"]&&&& \int_\Sigma\cat{A} \ar[rrrr,"\PhiA(H)"] &&&& \cat{A}^{\otimes\Legs(T )} \\ && &&{\displaystyle \cat{A}^{\otimes \, 0 } =I } \ar[rrrru,swap," \widehat{\cat{A}}(H^{\varphi_0})"] \ar[u,"\varphi_0"]
	\end{tikzcd}
\end{equation}
First we investigate the two upper triangles: The left upper triangle commutes 
by definition of $\cat{O}_\Sigma^\cat{A}$ while the right upper triangle commutes by construction of  $\PhiA(H)$ in Proposition~\ref{propenvfh}. 
The lower triangle commutes up to a canonical isomorphism,	
again by construction. Let us summarize this:

\begin{theorem}[Factorization homology description of the ansular functor]\label{thmfhenv}
	Let $\cat{A}$ be a cyclic framed $E_2$-algebra in $\smc$. Then for any corolla $T\in \Graphs$ and $H\in\Hbdy(T)$ with boundary $\Sigma = \partial H$,
	the diagram
		\begin{equation}\label{propenvfh1e}
		\begin{tikzcd}
		I \ar[rr,"\cat{O}_\Sigma^\cat{A}"] \ar[rrrr,bend right = 45,swap,"\widehat{\cat{A}} (H)"]&& \int_\Sigma\cat{A} \ar[rr,"\PhiA(H)"] && \cat{A}^{\otimes\Legs(T )}
		\end{tikzcd}
		\end{equation}
		commutes up to canonical isomorphism.
\end{theorem}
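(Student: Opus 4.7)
The statement is essentially a direct consequence of the construction of $\PhiA(H)$ in Proposition~\ref{propenvfh} together with the definition of $\cat{O}_\Sigma$, so my plan is to make this precise by specializing the general $\Phi$-construction to the empty embedding $\varphi_0 : \emptyset \to \Sigma$. The main conceptual point is that when we insert \emph{no} disks into $\Sigma$, the auxiliary handlebody $H^{\varphi_0}$ reduces canonically to $H$ itself, so the defining compatibility of $\PhiA(H)$ with the maps $\widehat{\cat{A}}(H^\varphi)$ collapses to the desired identity.

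First, I would recall that by the homotopy colimit description~\eqref{fheqncolim} of factorization homology, for every finite set $J$ and every oriented embedding $\varphi : \sqcup_J \mathbb{D}^2 \to \Sigma$ there is a canonical structure map $\cat{A}^{\otimes J} \to \int_\Sigma \cat{A}$ (denoted again by $\varphi$), and by definition of $\PhiA(H)$ in Proposition~\ref{propenvfh} we have a canonical isomorphism $\PhiA(H) \circ \varphi \cong \widehat{\cat{A}}(H^\varphi)$. Taking $J = \emptyset$ and $\varphi = \varphi_0 : \emptyset \to \Sigma$, the map $\varphi_0 : \cat{A}^{\otimes 0} = I \to \int_\Sigma \cat{A}$ is, by definition of the distinguished object (see Section~\ref{secprefh}), precisely $\cat{O}_\Sigma$. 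Hence
\begin{equation}
\PhiA(H) \circ \cat{O}_\Sigma \;\cong\; \widehat{\cat{A}}(H^{\varphi_0}) \ .
\end{equation}

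Next I would observe that $H^{\varphi_0}$ is canonically isomorphic to $H$ in $\Hbdy(T)$: by the construction of $H^\varphi$ recalled on page~\pageref{Hvarphipage}, the embedded disks of $H^\varphi$ consist of the original $|\Legs(T)|$ disks on $\partial H$ coming from the boundary parametrization of $\Sigma$, together with $|J|$ additional disks induced by $\varphi$; when $J = \emptyset$ no extra disks are added. Consequently $\widehat{\cat{A}}(H^{\varphi_0}) = \widehat{\cat{A}}(H)$ up to the canonical isomorphism coming from this identification, which is the commutativity of the lower triangle in the diagram displayed just before the theorem.

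Combining these two canonical isomorphisms yields $\PhiA(H) \circ \cat{O}_\Sigma \cong \widehat{\cat{A}}(H)$, which is exactly diagram~\eqref{propenvfh1e}. I do not foresee a genuine obstacle here: all the technical content (coherence of the co-cone making $\PhiA(H)$ well-defined, and the modularity of $\widehat{\cat{A}}$) has already been established in Proposition~\ref{propenvfh} and Theorem~\ref{thmmodext}. The only thing to be careful about is bookkeeping of the canonical isomorphisms, in particular confirming that the identification $H \cong H^{\varphi_0}$ used here is the same one implicit in the construction of $\PhiA(H)$, so that the two canonical isomorphisms compose coherently rather than up to an undetermined automorphism.
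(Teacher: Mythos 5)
Your proposal is correct and is essentially the paper's own argument: the paper also specializes to the empty embedding $\varphi_0:\emptyset\to\Sigma$, identifies the structure map $I=\cat{A}^{\otimes 0}\to\int_\Sigma\cat{A}$ with $\cat{O}_\Sigma$, uses the defining co-cone property $\PhiA(H)\circ\varphi_0\cong\widehat{\cat{A}}(H^{\varphi_0})$, and notes $H^{\varphi_0}=H$ to conclude. Nothing is missing.
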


		For a surface $\Sigma$ (which, for simplicity, we assume to be connected), it is
		 standard that $\int_\Sigma \cat{A}$ is a module over the algebra $\int_{\partial \Sigma \times \opint } \cat{A}$, see e.g.~\cite[Lemma~5.2]{ginot}.
		 The algebra structure on $\int_{\partial \Sigma \times \opint } \cat{A}$ comes from stacking 
		  cylinders over $\partial \Sigma$.
		One can obtain the action 
		$\int_{\partial \Sigma \times \opint } \cat{A}\otimes\int_\Sigma \cat{A}\to \int_\Sigma \cat{A}$ by choosing a collar for $\partial \Sigma$, i.e. an embedding \begin{align} j:(\partial \Sigma \times\opint)
		 \sqcup \Sigma \to \Sigma\label{eqnembj}\end{align} which by functoriality of factorization homology with respect to embeddings induces the action. Up to isomorphism, the action does not depend on the collar. 
		 By applying this to the disk, we find that $\cat{A}$ is a $\int_{\mathbb{S}^1\times\opint }\cat{A}$-module. This also implies that
		 $\cat{A}^{\otimes \Legs(T)}$ is a module over $\int_{\partial \Sigma \times \opint } \cat{A}$ for any corolla $T$ and $\Sigma \in \Surf(T)$.

			\begin{proposition}\label{propmodulemap}
				Let $\cat{A}$ be a cyclic framed $E_2$-algebra in $\smc$, $T\in \Graphs$ a corolla and $H\in\Hbdy(T)$ with boundary $\Sigma=\partial H$.
				Then the map $\PhiA(H):\int_\Sigma \cat{A}\to\cat{A}^{\otimes\Legs(T)}$ comes naturally with the structure of a $\int_{\partial \Sigma \times \opint } \cat{A}$-module map.
				\end{proposition}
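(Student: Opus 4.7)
The plan is to unpack both module structures in terms of embeddings and reduce the module-map property to a statement about values of the modular extension $\widehat{\cat{A}}$ on certain handlebodies, then use the modular (i.e.\ gluing) axiom to check it. To begin, I would recall that the action of $\int_{\partial\Sigma\times\opint}\cat{A}$ on $\int_\Sigma\cat{A}$ is induced by functoriality from the collar embedding $j\colon(\partial\Sigma\times\opint)\sqcup\Sigma\to\Sigma$ of~\eqref{eqnembj}. Similarly, using the identification $\partial\Sigma\simeq\bigsqcup_{\ell\in\Legs(T)}\mathbb{S}^1$ and $\cat{A}^{\otimes\Legs(T)}\simeq\bigotimes_\ell\int_{\mathbb{D}^2}\cat{A}$, the action on $\cat{A}^{\otimes\Legs(T)}$ is induced factor-wise from the annulus-into-disk embedding $\mathbb{S}^1\times\opint\hookrightarrow\mathbb{D}^2$. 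Both module structures therefore arise from embeddings of surfaces, and factorization homology naturally records them.

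Next, using the homotopy colimit description~\eqref{fheqncolim} of $\int_\Sigma\cat{A}$ and $\int_{\partial\Sigma\times\opint}\cat{A}$, it suffices to exhibit the module-map structure on the level of the generating embeddings and check coherence. Concretely, for any oriented embeddings $\varphi\colon\sqcup_J\mathbb{D}^2\to\Sigma$ and $\psi\colon\sqcup_K\mathbb{D}^2\to\partial\Sigma\times\opint$, their collared combination $\varphi\sqcup j\circ\psi\colon\sqcup_{J\sqcup K}\mathbb{D}^2\to\Sigma$ gives rise, by the definition of $\PhiA(H)$ in Proposition~\ref{propenvfh}, to the composite
\begin{equation*}
\cat{A}^{\otimes K}\otimes\cat{A}^{\otimes J}\;\xrightarrow{\ \widehat{\cat{A}}(H^{\varphi\sqcup j\circ\psi})\ }\;\cat{A}^{\otimes\Legs(T)}.
\end{equation*}
I will show that this map simultaneously represents both sides of the module-map square: on the one hand it is $\PhiA(H)$ applied after the collar-action on $\int_\Sigma\cat{A}$; on the other hand it is the $\int_{\partial\Sigma\times\opint}\cat{A}$-action on $\cat{A}^{\otimes\Legs(T)}$ applied after $\PhiA(H)$.

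The key topological observation is a canonical decomposition of the handlebody $H^{\varphi\sqcup j\circ\psi}$: the disks coming from $j\circ\psi$ lie in a collar of the $|\Legs(T)|$ parametrized boundary disks of $H$, so that
\begin{equation*}
H^{\varphi\sqcup j\circ\psi}\;\cong\;H^{\varphi}\;\underset{\bigsqcup_\ell\mathbb{D}^2}{\cup}\;\bigsqcup_{\ell\in\Legs(T)}\bigl(\mathbb{D}^2\times[0,1]\bigr)^{K_\ell},
\end{equation*}
where $K=\bigsqcup_\ell K_\ell$ records how the disks of $\psi$ are distributed among the boundary circles, and $(\mathbb{D}^2\times[0,1])^{K_\ell}$ denotes the cylinder with its two cap disks together with $|K_\ell|$ side disks. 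Since $\widehat{\cat{A}}$ is a modular algebra it respects this gluing, and the cylinder factor $\widehat{\cat{A}}\bigl((\mathbb{D}^2\times[0,1])^{K_\ell}\bigr)$ is by construction precisely the morphism through which $\int_{\mathbb{S}^1\times\opint}\cat{A}$ acts on $\cat{A}$ for the given embedding of $|K_\ell|$ disks in the annulus. This yields the desired equality.

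The main obstacle is coherence rather than the geometric identification itself: one must promote the above equality of individual morphisms to a compatible natural transformation between the two module-action diagrams indexed by all pairs $(\varphi,\psi)$, so that the homotopy colimits assemble into a bona fide 2-morphism in $\smc$. This amounts to tracking the standard naturality of the gluing decomposition under refinement of $\varphi$ and $\psi$ and under the framed $E_2$-composition of disks, which is formal once one has the local identification and uses, as in the proof of Proposition~\ref{propenvfh}, that both $\widehat{\cat{A}}$ and the cyclic structure are coherent. Because the choice of collar $j$ is unique up to a contractible space of choices (by standard collar-neighborhood arguments), the resulting module-map structure on $\PhiA(H)$ is canonical.
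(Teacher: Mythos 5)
Your proposal is correct and follows essentially the same route as the paper's proof: one combines a generating embedding $\varphi$ of $\Sigma$ with a collar-composed embedding of the $\psi$-disks, identifies $H^{j(\psi,\varphi)}$ as the gluing of $H^\varphi$ with the thickened collar cylinders carrying the $\psi$-disks, and uses the gluing property of the modular algebra $\widehat{\cat{A}}$ to produce the canonical isomorphism between ``act, then apply $\PhiA(H)$'' and ``apply $\id\otimes\PhiA(H)$, then act,'' descending to the homotopy colimits. The coherence step you flag is exactly what the paper dispatches as a straightforward observation.
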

			
			\begin{proof}
				Let $\psi : \sqcup_K \mathbb{D}^2 \to \partial \Sigma \times \opint $ and $\varphi : \sqcup_J \mathbb{D}^2\to \Sigma$ be oriented embeddings.  Postcomposition with~\eqref{eqnembj} gives us an embedding $j(\psi,\varphi) : \sqcup_{K\sqcup J} \mathbb{D}^2\to \Sigma$.
				Since $\widehat{\cat{A}}$ is a modular algebra, the map
				\begin{align} \cat{A}^{\otimes (K\sqcup J)} \ra{\widehat{\cat{A}}\left(H^{j(\psi,\varphi)}\right)} \cat{A}^{\otimes\Legs(T)} \label{eqnmapfromj} \end{align} is canonically isomorphic to the partial composition of \begin{align} \widehat{\cat{A}}(H^\varphi) : \cat{A}^{\otimes J} \to \cat{A}^{\otimes \Legs(T)}\label{eqnH-varphi} \end{align}
				with
				\begin{align}
					\widehat{\cat{A}} ( (\partial \Sigma \times \opint) ^\psi  ) : \cat{A}^{\otimes K}  \otimes \cat{A}^{\otimes \Legs(T)} \to  \cat{A}^{\otimes \Legs(T)}    \label{eqnH-varphi2} \ .  \end{align} 
				(A priori, $\widehat{\cat{A}} ( (\partial \Sigma \times \opint) ^\psi  )$ is a map
				$\cat{A}^{\otimes K}  \to  \cat{A}^{\otimes \Legs(T)}  \otimes \cat{A}^{\otimes \Legs(T)} $,
				but we move one of the $\cat{A}^{\otimes \Legs(T)}$ to the left via the pairing.)
			 But the maps~\eqref{eqnmapfromj} induce the map
				\begin{align}
				\label{eqnfirstinducedmap}	\int_{\partial \Sigma \times \opint }\cat{A} \otimes \int_\Sigma \cat{A} \ra{\text{action}}     \int_\Sigma \cat{A} \ra{\PhiA(H)} \cat{A}^{\otimes \Legs(T)} \end{align}
				while the (partial) composition of~\eqref{eqnH-varphi} and~\eqref{eqnH-varphi2}
				\begin{align}
					\cat{A}^{\otimes (K\sqcup J)} \ra{ \id \otimes \widehat{\cat{A}}(H^\varphi)  } \cat{A}^{\otimes K}  \otimes \cat{A}^{\otimes \Legs(T)}
					\ra{    \widehat{\cat{A}} ( (\partial \Sigma \times \opint)  ^ \psi  )  }     \cat{A}^{\otimes \Legs(T)}
					\end{align}
				induces the map 
				\begin{align}
					\label{eqnsecondinducedmap}\left(	\int_{\partial \Sigma \times \opint }\cat{A}\right) \otimes \int_\Sigma \cat{A} \ra{\id  \otimes \PhiA(H)     } \left( \int_{\partial \Sigma \times \opint }\cat{A}\right)  \otimes \cat{A}^{\otimes \Legs(T)}
					\ra{\text{action}}
					\cat{A}^{\otimes \Legs(T)} \ . 
					\end{align}
				Therefore, we obtain a canonical isomorphism between~\eqref{eqnfirstinducedmap}
				and~\eqref{eqnsecondinducedmap}. It is a straightforward observation that this equips $\PhiA(H)$ with the structure of a module map.
 				\end{proof}

	\subsection{Equivariance properties of handlebody skein modules under the mapping class group action}
In this subsection, we discuss the transformation behavior of handlebody skein modules under the action of the mapping class group on factorization homology.

\begin{definition}\label{defpartialM}
	Let $\cat{A}$ be a cyclic framed $E_2$-algebra in $\smc$ and $T$ a corolla.
	For $\Sigma \in \Surf(T)$, we denote by $\smc_\partial \left( \int_{\Sigma} \cat{A}, \cat{A}^{\otimes \Legs(T)}\right)$ the groupoid of $\int_{\partial \Sigma \times \opint } \cat{A}$-module maps $\int_\Sigma \cat{A}\to \cat{A}^{\otimes \Legs(T)}$ with module map isomorphisms as morphisms.
\end{definition}

\begin{remark}\label{remactiononfh}
	Any diffeomorphism
	$f:\Sigma\to \Sigma'$
	induces a 1-morphism
	$f_* : \int_{\Sigma}\cat{A}\to\int_{\Sigma'} \cat{A}$ 
	and an algebra map
	$\alpha_f : \int_{\partial \Sigma \times [0,1]}\cat{A}\to \int_{   \partial \Sigma' \times [0,1]      } \cat{A}$ (if $\Sigma=\Sigma'$, this is the identify)
	such that $f_*$ comes naturally with the structure
	of a
	$\int_{\partial \Sigma \times [0,1]}\cat{A}$-module map
	$\int_{\Sigma}\cat{A}\to   \alpha_f^*  \int_{\Sigma'} \cat{A}$.
	Here $ \alpha_f^*  \int_{\Sigma'} \cat{A}$ is the restriction
	of $  \int_{\Sigma'} \cat{A}$ along $\alpha_f$.
	The isomorphism class of the module map
	that $f$ gives rise to
	depends only on the mapping class of $f$.
\end{remark}

\begin{proposition}\label{propenvfh2x}
	Let $\cat{A}$ be a cyclic framed $E_2$-algebra in $\smc$, $T\in \Graphs$ a corolla and $f:\Sigma\to \Sigma'$ a diffeomorphism.
	Then for any $H\in \Hbdy(T)$ with $\partial H=\Sigma$, the  diagram
	\begin{equation}\label{propenvfh2ex}
		\begin{tikzcd}
			\int_\Sigma \cat{A}	 \ar[rrrd,"\PhiA(H)"] \ar[dd,swap,"f_*"]  \\ 
			&&& \cat{A}^{\otimes \Legs(T )}  \\ 
			\int_{\Sigma'} \cat{A} 	 \ar[rrru,"\PhiA(f.H)", swap] 
		\end{tikzcd}
	\end{equation}
	commutes up to canonical isomorphism of $\int_{\partial \Sigma \times \opint }\cat{A}$-module maps, i.e.\ up to an isomorphism in
	the category
	$\smc_\partial \left( \int_{\Sigma} \cat{A}, \cat{A}^{\otimes \Legs(T)}\right)$.
	This isomorphism,
	when combined with the isomorphism from Theorem~\ref{thmfhenv} and the homotopy fixed point structure of $\cat{O}_\Sigma^\cat{A}$,
	yield the isomorphism $\widehat{\cat{A}}\left(\widetilde f\right):\widehat{\cat{A}}(H)\to \widehat{\cat{A}}(f.H)$:
	\begin{equation}\label{propenvfh2exy0}
		\begin{tikzcd}
			&	\int_\Sigma \cat{A}	 \ar[rrrd,"\PhiA(H)"] \ar[dd,"f_*"]   \\ I \ar[ur,"\cat{O}_\Sigma^\cat{A}"] \ar[dr,swap,"\cat{O}_{\Sigma'}^\cat{A}"]  \ar[rrrr,bend left=100,"\widehat{\cat{A}}(H)"]   \ar[rrrr,bend left=-100,swap,"\widehat{\cat{A}}(f.H)"] 
			&&&& \cat{A}^{\otimes \Legs(T )} && = & \widehat{\cat{A}}\left(\widetilde f\right)  \\ 
			&		 \int_{\Sigma'} \cat{A} 	 \ar[rrru,"\PhiA(f.H)", swap] 
		\end{tikzcd}
	\end{equation}
\end{proposition}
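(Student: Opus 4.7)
The plan is to use the homotopy colimit presentation \eqref{fheqncolim} of $\int_\Sigma \cat{A}$ to reduce the statement to a compatibility check at each oriented embedding $\varphi:\sqcup_J \mathbb{D}^2 \to \Sigma$. By construction (Proposition \ref{propenvfh}), the component of $\PhiA(H)$ at $\varphi$ is $\widehat{\cat{A}}(H^\varphi)$. Since $f_*$ sends the cocone leg for $\varphi$ to the cocone leg for $f\circ\varphi:\sqcup_J \mathbb{D}^2 \to \Sigma'$, the component of $\PhiA(f.H)\circ f_*$ at $\varphi$ is $\widehat{\cat{A}}\bigl((f.H)^{f\circ\varphi}\bigr)$. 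It thus suffices to produce a natural family of isomorphisms between these two maps, compatible with partial compositions by $\framed$-operations.

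The key geometric input is Lemma \ref{lemmaisofibprop}: the diffeomorphism $\widetilde f: H \to f.H$ satisfies $\partial \widetilde f = f$, so it transports the disks of $H^\varphi$ embedded in $\partial H = \Sigma$ (given by $\varphi$) to the disks of $(f.H)^{f\circ\varphi}$ embedded in $\partial(f.H) = \Sigma'$ (given by $f\circ\varphi$). Hence $\widetilde f$ defines an isomorphism $H^\varphi \cong (f.H)^{f\circ\varphi}$ in $\Hbdy$, and functoriality of the modular algebra $\widehat{\cat{A}}$ produces a canonical isomorphism $\widehat{\cat{A}}(H^\varphi)\cong \widehat{\cat{A}}\bigl((f.H)^{f\circ\varphi}\bigr)$. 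Exactly as in the proof of Proposition \ref{propenvfh}, the naturality of these isomorphisms under refinement of $\varphi$ by an $\framed$-operation follows from the fact that $\widetilde f$ restricts coherently on the region of the handlebody affected by the refinement, together with modularity (respect for gluing) of $\widehat{\cat{A}}$. The induced map on the colimit is the desired isomorphism in $\smc\bigl(\int_\Sigma \cat{A},\cat{A}^{\otimes\Legs(T)}\bigr)$.

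To upgrade this to an isomorphism in $\smc_\partial\bigl(\int_\Sigma \cat{A},\cat{A}^{\otimes\Legs(T)}\bigr)$, we repeat the argument with auxiliary disks $\psi:\sqcup_K \mathbb{D}^2 \to \partial \Sigma \times \opint$ inserted through the collar construction \eqref{eqnembj}. As in the proof of Proposition \ref{propmodulemap}, $\widehat{\cat{A}}(H^{j(\psi,\varphi)})$ decomposes, up to canonical isomorphism, as the action of $\widehat{\cat{A}}\bigl((\partial\Sigma\times\opint)^\psi\bigr)$ on $\widehat{\cat{A}}(H^\varphi)$. The diffeomorphism $\widetilde f$ carries this collar region of $H$ to the corresponding collar of $f.H$ induced by $f$, which is precisely the datum the algebra map $\alpha_f$ of Remark \ref{remactiononfh} encodes on factorization homology. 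The collection of component isomorphisms therefore intertwines the module structures, giving the isomorphism in $\smc_\partial$.

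For the final assertion, Theorem \ref{thmfhenv} identifies $\widehat{\cat{A}}(H)\cong \PhiA(H)\circ \cat{O}_\Sigma$ and $\widehat{\cat{A}}(f.H)\cong \PhiA(f.H)\circ \cat{O}_{\Sigma'}$, while the homotopy $\catf{Diff}(\Sigma)$-fixed-point structure of $\cat{O}_\Sigma$ supplies $f_*\cat{O}_\Sigma \cong \cat{O}_{\Sigma'}$. Composing these identifications with the isomorphism from the first part, evaluated at the empty embedding $\varphi_0:\emptyset\to\Sigma$, one obtains an isomorphism $\widehat{\cat{A}}(H)\to \widehat{\cat{A}}(f.H)$ which, by modular functoriality of $\widehat{\cat{A}}$, must be $\widehat{\cat{A}}(\widetilde f)$ itself, since at $\varphi_0$ the component isomorphism is literally $\widehat{\cat{A}}$ applied to $\widetilde f: H \to f.H$. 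The principal obstacle is purely bookkeeping: one must match the coherence data of the colimit naturality, of the intertwining through $\alpha_f$, and of the homotopy fixed-point structure of $\cat{O}_\Sigma$, so that the resulting isomorphism is literally $\widehat{\cat{A}}(\widetilde f)$ rather than merely some abstract isomorphism between the right endpoints.
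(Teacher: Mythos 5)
Your proposal is correct and follows essentially the same route as the paper: the isomorphism $H^\varphi \cong (f.H)^{f\circ\varphi}$ induced by $\widetilde f$ (Lemma~\ref{lemmaisofibprop}), functoriality of $\widehat{\cat{A}}$, and descent along the colimit presentation of $\int_\Sigma\cat{A}$, with the module-map property coming from $f$ respecting the boundary parametrizations and the final triangle checked by unwinding the definitions at the empty embedding. Your version merely spells out the coherence bookkeeping that the paper leaves implicit.
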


\begin{proof}
	Let
	$\varphi : \sqcup_J \mathbb{D}^2 \to \Sigma$ be
	an oriented embedding, then $f\circ \varphi:\sqcup_J \mathbb{D}^2 \to \Sigma'$ is also an oriented embedding. 
	The diffeomorphism $\widetilde f : H \to f.H$ from Lemma~\ref{lemmaisofibprop}
	induces a diffeomorphism
	$H^\varphi \to (f.H)^{f \circ \varphi}$ giving us, after evaluation with the modular extension $\widehat{\cat{A}}$, an isomorphism
	$\widehat{\cat{A}}(H^\varphi)\to \widehat{\cat{A}}((f.H)^{f \circ \varphi})$. By the construction of $\PhiA$ these isomorphisms descend to factorization homology to yield the isomorphism~\eqref{propenvfh2ex}. It is an isomorphism of $\int_{\partial \Sigma \times \opint }\cat{A}$-module maps because $f$ respects the boundary parametrizations.
	Now~\eqref{propenvfh2exy0} can be verified directly using the definitions.
\end{proof}

A relevant special case of Proposition~\ref{propenvfh2x} is the following:

\begin{corollary}\label{corxi}
	Let $\cat{A}$ be a cyclic framed $E_2$-algebra in $\smc$, $T\in \Graphs$ a corolla and $g:H\to H'$ a diffeomorphisms between $H,H'\in\Hbdy(T)$.
	Set $f:= \partial g$.
	Then after the identification $\PhiA(f.H) \cong \PhiA(H')$ induced by $\xi_g$ from Remark~\ref{remisofibprop2}, we obtain an isomorphism 
	of $\int_{\partial \Sigma \times \opint }\cat{A}$-module maps
	filling the triangle
	\begin{equation}\label{propenvfh2exX}
		\begin{tikzcd}
			\int_\Sigma \cat{A}	 \ar[rrrd,"\PhiA(H)"] \ar[dd,swap,"f_*=\partial g_*"]  \\ 
			&&& \cat{A}^{\otimes \Legs(T )}  \\ 
			\int_{\Sigma'} \cat{A} 	 \ar[rrru,"\PhiA(H')", swap] 
		\end{tikzcd}
	\end{equation}
	such that
	\begin{equation}\label{propenvfh2exy0X}
		\begin{tikzcd}
			&	\int_\Sigma \cat{A}	 \ar[rrrd,"\PhiA(H)"] \ar[dd,"f_*=\partial g_*"]   \\ I \ar[ur,"\cat{O}_\Sigma^\cat{A}"] \ar[dr,swap,"\cat{O}_{\Sigma'}^\cat{A}"]  \ar[rrrr,bend left=100,"\widehat{\cat{A}}(H)"]   \ar[rrrr,bend left=-100,swap,"\widehat{\cat{A}}(H')"] 
			&&&& \cat{A}^{\otimes \Legs(T )} && = & \widehat{\cat{A}}\left(g\right)  \\ 
			&		 \int_{\Sigma'} \cat{A} 	 \ar[rrru,"\PhiA(H')", swap] 
		\end{tikzcd}
	\end{equation}
	In particular, if a diffeomorphism $g$ of $\Sigma$ lies in the diffeomorphism group of some handlebody $H$, there is a
	canonical isomorphism
	\begin{align}
		\label{eqnphicomp}\nu_g : 	\PhiA(H)g_*\cong \PhiA(H) \ . 
	\end{align}
	This endows $\PhiA(H)$ with the structure of a homotopy fixed  point for the $\Diff(H)$-action on
	the  category
	$\smc_\partial \left( \int_{\Sigma} \cat{A}, \cat{A}^{\otimes \Legs(T)}\right)$.
\end{corollary}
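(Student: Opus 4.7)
The plan is to deduce this corollary directly from Proposition~\ref{propenvfh2x} by specializing $f := \partial g$ and then transporting the resulting diagram along the canonical diffeomorphism $\xi_g : f.H \to H'$ supplied by Remark~\ref{remisofibprop2}.

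Concretely, I would first apply Proposition~\ref{propenvfh2x} to $f = \partial g$ and the handlebody $H$. This furnishes a canonical isomorphism of $\int_{\partial \Sigma \times \opint}\cat{A}$-module maps $\Phi_\cat{A}(H) \cong \Phi_\cat{A}(f.H)\circ f_*$, together with the identification of the resulting composite isomorphism $\widehat{\cat{A}}(H) \to \widehat{\cat{A}}(f.H)$ with $\widehat{\cat{A}}(\widetilde f)$. Next, I would use $\xi_g : f.H \to H'$; since $\partial \xi_g = \id_{\Sigma'}$, for any oriented embedding $\varphi : \sqcup_J \mathbb{D}^2 \to \Sigma'$ the diffeomorphism $\xi_g$ lifts canonically to a diffeomorphism $(f.H)^\varphi \to (H')^\varphi$, so that applying $\widehat{\cat{A}}$ and descending through the construction of Proposition~\ref{propenvfh} yields a canonical isomorphism $\Phi_\cat{A}(f.H) \cong \Phi_\cat{A}(H')$ of $\int_{\partial \Sigma \times \opint}\cat{A}$-module maps. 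This is the identification referred to in the statement. Composing the two isomorphisms produces~\eqref{propenvfh2exX}.

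To verify~\eqref{propenvfh2exy0X}, I would invoke Theorem~\ref{thmfhenv} to rewrite $\Phi_\cat{A}(H)\cat{O}_\Sigma \cong \widehat{\cat{A}}(H)$ and $\Phi_\cat{A}(H')\cat{O}_{\Sigma'} \cong \widehat{\cat{A}}(H')$. The composite isomorphism $\widehat{\cat{A}}(H) \to \widehat{\cat{A}}(H')$ obtained from~\eqref{propenvfh2exX} is then, by functoriality of the modular extension, equal to $\widehat{\cat{A}}(\xi_g)\circ \widehat{\cat{A}}(\widetilde f) = \widehat{\cat{A}}(\xi_g \circ \widetilde f) = \widehat{\cat{A}}(g)$, where the last equality is the defining identity of $\xi_g$ recorded in Remark~\ref{remisofibprop2}.

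The special case~\eqref{eqnphicomp} is the instance of~\eqref{propenvfh2exX} with $H' = H$ and $\partial g = g$. Upgrading the resulting family of isomorphisms $\Phi_\cat{A}(H)g_* \cong \Phi_\cat{A}(H)$ indexed by $g \in \Diff(H)$ to a genuine homotopy fixed point structure on the $\Diff(H)$-action on $\smc_\partial\!\left(\int_\Sigma\cat{A}, \cat{A}^{\otimes\Legs(T)}\right)$ requires coherent compatibility with compositions in $\Diff(H)$; this is the main technical point I expect to have to check. It should reduce to the fact that the construction of $\Phi_\cat{A}$ in Proposition~\ref{propenvfh} is natural in $H$ and compatible with gluing, combined with the coherence already built into $\widehat{\cat{A}}$ as a modular algebra: for $g_1, g_2 \in \Diff(H)$ the isomorphism associated with $g_2 g_1$ and the composition of those associated with $g_1$ and $g_2$ both descend from the single identity $\widehat{\cat{A}}(g_2 g_1) \cong \widehat{\cat{A}}(g_2)\circ \widehat{\cat{A}}(g_1)$ that is part of the modular algebra structure.
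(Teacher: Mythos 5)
Your proposal is correct and follows exactly the route the paper intends: the paper states this corollary without a separate proof, simply as the specialization of Proposition~\ref{propenvfh2x} to $f=\partial g$, transported along the identification $\PhiA(f.H)\cong\PhiA(H')$ induced by $\xi_g$ from Remark~\ref{remisofibprop2}, with the pasting identity $\widehat{\cat{A}}(\xi_g)\circ\widehat{\cat{A}}(\widetilde f)=\widehat{\cat{A}}(g)$ giving~\eqref{propenvfh2exy0X}. Your additional remark that the homotopy fixed point structure requires coherence over compositions in $\Diff(H)$, settled by naturality of the construction of $\PhiA$ and the functoriality of $\widehat{\cat{A}}$ on $\Hbdy(T)$, is accurate and in fact more explicit than the paper itself.
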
 

\begin{remark}
This induces an action of $\Map(H)$ on $\Aut_\partial(\PhiA(H))$:  A mapping class $g:H\to H$ acts via the composition
\begin{align}
	\Aut_\partial(\PhiA(H)) \longrightarrow \Aut_\partial(\PhiA(H)g_*)\longrightarrow \Aut_\partial(\PhiA(H)) \ , 
\end{align}
where the first map is obtained by applying the functor $g_*$ and the second one is the pullback along the fixed point structure $\nu_g$. \end{remark}

 		\subsection{The connection to skein algebras and skein modules\label{secskeinalgmod}}	It was briefly mentioned in the introduction that the $\PhiA(H)$-maps can be understood
as generalized skein modules for handlebodies, formulated via the modular envelope and factorization homology. 
We will explain now why this is indeed just a different perspective on Theorem~\ref{thmfhenv}:
 			For a framed $E_2$-algebra $\cat{A}$ in $\cat{S}$ and 
 			a surface $\Sigma$, one defines
 			the \emph{skein algebra of $\cat{A}$ and $\Sigma$}
 			by
 			\begin{align}
 			\SkAlg_\cat{A}(\Sigma) := \End(\cat{O}_\Sigma^\cat{A}) \ , 
 			\end{align}
 			where the endomorphisms on the right hand side are taken in the hom category $\cat{S}\left(I,\int_\Sigma \cat{A}\right)$.
 The crucial fact that this definition of the skein algebra coincides with the `traditional' definition in the case where $\cat{A}$ is a semisimple ribbon category follows from \cite{cooke}.
 		
 		We will now see that this implies that
 		the value $\widehat{\cat{A}}(H)$ of the modular extension $\widehat{\cat{A}}$ of $\cat{A}$ on $H$
 		 becomes a module over the skein algebra of $\Sigma = \partial H$. 
 		Indeed, the
 		 action is the algebra map
 		\begin{align}
 		\Add: \SkAlg_\cat{A}(\Sigma) \to \End( \widehat{\cat{A}}(H) ) \        \label{eqnaddmap}  
 		\end{align}
		(the endomorphisms on the right hand side are taken in the hom category $\cat{S}\left(I, \cat{A}^{\otimes \Legs(T)}\right)$; the notation $\Add$ is borrowed from~\cite{masbaumroberts})
 		defined as the composition
 		\begin{align}\label{eqndefadd}
 			\SkAlg_\cat{A}(\Sigma)=\End(\cat{O}_\Sigma^\cat{A}) \ra{\PhiA(H)\circ -}
 			\End(\PhiA(H)\circ\cat{O}_\Sigma^\cat{A}) \stackrel{\text{Theorem~\ref{thmfhenv}}}{\cong}
 		\End(	\widehat{\cat{A}}(H) )\ . 
 		\end{align}
	
		In  case $\cat{A}$ is ribbon and semisimple, following~\cite{Walker} and \cite[Section~3.7]{skeinfin}, any handlebody $H$ with $n$ embedded disks in its boundary $\partial H=\Sigma$ also leads to a functor
		\begin{align}
		\Sk_\cat{A}(H):	\int_\Sigma \cat{A} \longrightarrow \cat{A}^{\otimes n}
		\end{align}
		whose components are given by relative skein modules. This functor tautologically also satisfies the universal property of Proposition~\ref{propenvfh1e} so that there is a canonical isomorphism $\Sk_\cat{A}(H)\cong \PhiA(H)$ in that case.

\section{Connected cyclic framed $E_2$-algebras and their associated extensions of the surface operad}
One of the main goals of this article is to identify general necessary and
sufficient conditions for a cyclic framed $E_2$-algebra  to extend to a modular functor.
In this subsection, we encounter the main technical condition that will be relevant in this context, namely the notion of \emph{connectedness} that will be defined in terms of handlebody skein modules. 
In this section,
 we will be occupied with studying this notion in detail;
we will see why the notion is relevant in~Section~\ref{secclassmf},
and we will discuss how to produce examples in Section~\ref{seccof}.

\subsection{The $\Omega_\cat{A}$-groupoids and the map $\SurfA\to\Surf$ of modular operads}\label{secsurfa}
	This subsection contains a technical construction that lies at the heart of the paper: For any cyclic framed $E_2$-algebra $\cat{A}$, we will construct a modular operad $\SurfA$ (depending on $\cat{A}$) and a canonical map $\SurfA \to \Surf$.
	The construction of a modular functor in Theorem~\ref{thmdefsurfa2} will then produce a $\SurfA$-algebra structure on $\cat{A}$.

	\begin{definition}\label{defomega}
		Let $\cat{A}$ be a cyclic framed $E_2$-algebra in $\smc$ and $T$ a corolla. For $\Sigma \in \Surf(T)$,
		we define 
		\begin{align}\label{eqndefomega} \Omega_\cat{A} (\Sigma) := \left\{ \begin{array}{c}
				\text{replete full subgroupoid of} \  \smc_\partial \left( \int_\Sigma \cat{A}, \cat{A}^{\otimes \Legs(T)}\right) \\ \text{spanned by all module maps} \
				\PhiA(H) : \int_\Sigma \cat{A} \to \cat{A}^{\otimes \Legs(T)} \\  \text{given in Proposition~\ref{propenvfh} for any handlebody $H$ with boundary $\Sigma$} \\ \text{(with the module map structure from Proposition~\ref{propmodulemap})}\end{array}\right\} \ .
		\end{align}
		Here \emph{replete} full (as opposed to just full) means
		that $\Omega_\cat{A} (\Sigma)$ is closed under isomorphisms: It does not only contain all the $\PhiA(H)$, but also all maps $\int_\Sigma \cat{A} \to \cat{A}^{\otimes \Legs(T)}$ isomorphic, as module map, to some $\PhiA(H)$. (Of course, passing from a full subcategory to its repletion does not change the subcategory up to equivalence.)
		The definition~\eqref{eqndefomega} is extended to the case of non-connected $T=\sqcup_{i=1}^n T^{(i)}$ by $\Omega_\cat{A}     (\sqcup_{i=1}^n \Sigma_i)=\prod_{i=1}^n \Omega_\cat{A}(\Sigma_i)$ for $\Sigma_i\in\Surf(T^{(i)})$ for $i=1,\dots,n$.
	\end{definition}

Roughly speaking, the modular operad $\SurfA$ will be constructed as some sort of homotopy quotient of this collection of groupoids by the actions of the mapping class groups: It will be made of pairs $(\Sigma, \Phi_\cA(H))$ with $\Sigma=\partial H$ with operadic composition induced by the gluing of surfaces, exploiting the fact that the functors $\Phi_\cA(H)$ are compatible with those.

In order to turn this into a precise definition, recall that
	for a functor $F:\cat{C}\to\Cat$, 
	i.e.\ a category-valued precosheaf, 
	the \emph{Grothendieck construction} 
	of $F$ is the category $\Gr F$
	formed by pairs $(c,x)$, 
	where $c\in\cat{C}$ and $x \in F(c)$, see \cite[Section~I.5]{maclanemoerdijk} for a textbook reference.
	A morphism $(c,x)\to (c',x')$ 
	is a morphism $f: c \to c'$ 
	in $\cat{C}$
	together with a morphism
	$\alpha : F(f)x \to x'$ in $F(c')$.
	The Grothendieck construction 
	comes with a canonical projection functor $\Gr F \to \cat{C}$.
	(Often, the Grothendieck construction of $F$ is denoted by $\int F$. In this article,
	we will not use the integral sign for the Grothendieck construction
	because of the notational conflict with factorization homology.)
	There is also a version of the Grothendieck construction for functors $F:\cat{C}^\op \to \Cat$, i.e.\ for category-valued presheaves. It is defined as the category $\Gr F$ 
	formed by pairs $(c,x)$, where $c\in\cat{C}$ and $x \in F(c)$.
	A morphism $(c,x)\to (c',x')$ is a morphism $f: c \to c'$ 
	in $\cat{C}$
	together with a morphism
	$\alpha : x \to F(f)x'$ in $F(c)$.
	It comes again with a projection functor $\Gr F \to \cat{C}$ (not to $\cat{C}^\op$).
	Later we will also need a Grothendieck construction for higher categories, see e.g.\ the treatment in 
	\cite[Chapter~3.2]{htt}.
	
	Recall from Remark~\ref{remSURF} the topological modular 
	operad $\SURF$ of surfaces and \emph{diffeomorphisms} as opposed to mapping classes.
	Clearly, we have for every corolla $T$
	a functor
	\begin{align}
		\smc_\partial \left( \int_{-} \cat{A}, \cat{A}^{\otimes \Legs(T)}\right) \   : \  \SURF^\op(T) \to \Grpd \ , \quad \Sigma \mapsto  \smc_\partial \left( \int_{\Sigma} \cat{A}, \cat{A}^{\otimes \Legs(T)}\right) \ . \label{eqnmodulemapsfunctor}
	\end{align}
	This gives us also a functor
	\begin{align} \Omega_\cat{A}:\SURF^\op(T) \to \Grpd \ , \quad \Sigma \mapsto \Omega_\cat{A}(\Sigma) \ ,  \label{eqnomegafunctor} \end{align}
	by restriction in range. To see that this is true, we only need to verify that the
	$\Omega_\cat{A}$-subgroupoids
	$\Omega_\cat{A}(\Sigma)\subset \smc_\partial(   \int_\Sigma \cat{A},\cat{A}^{\otimes \Legs(T)}   )$
	are stable under the precomposition with diffeomorphisms, but this follows from their definition and
	Proposition~\ref{propenvfh2x}.

	By taking the Grothendieck construction over the functors~\eqref{eqnmodulemapsfunctor}, we will define the following auxiliary operad:

	\begin{definition}\label{defauxga}
		We denote by
	$
	\cat{G}_\cat{A}$ the topological modular operad
	sending a corolla $T$ to the Grothendieck construction
	\begin{align}     \left(   \cat{G}_\cat{A}
		\right)(T):=    \Gr \left(   \SURF^\op(T) \ra{   \smc_\partial \left( \int_{-} \cat{A}, \cat{A}^{\otimes \Legs(T)}\right)   } \Grpd  \right) \ .  \label{eqnprescriptioncorolla}
	\end{align}
	of the functor~\eqref{eqnmodulemapsfunctor}.
	\end{definition}
	
	Let us specify the operadic structure:
	First of all, we extend the prescription~\eqref{eqnprescriptioncorolla} to non-connected objects in $\Graphs$ by sending disjoint unions of corollas
	to products of groupoids. 
	Next we need to extend  the definition of $ \cat{G}_\cat{A} $ to morphisms in $\Graphs$. Let $\Gamma : T \to T'$ be a morphism in $\Graphs$. Without loss of generality, we can assume that $T'$ is connected, i.e.\ a corolla. We write the decomposition of $T$ into corollas as $T=\sqcup_{i=1}^n T^{(i)}$. 
	The image $\left(  \cat{G}_\cat{A} \right) (\Gamma)$ of $\Gamma$ under $ \cat{G}_\cat{A}$, that we now want to construct, is by definition a functor
	
	\footnotesize
	\begin{align}
		\left(  \cat{G}_\cat{A} \right) (\Gamma) \ : \ 	\prod_{i=1}^n \Gr \left(   \Surf^\op(T) \ra{   \smc_\partial \left( \int_{-} \cat{A}, \cat{A}^{\otimes \Legs(T^{(i)})}\right)   } \Grpd  \right) \to  \Gr \left(   \Surf^\op(T) \ra{   \smc_\partial \left( \int_{-} \cat{A}, \cat{A}^{\otimes \Legs(T')}\right)   } \Grpd  \right) \ .  
	\end{align}
	\normalsize
	We define this functor as follows:
	An object on the left consists of a surface $\Sigma = \sqcup_{i=1}^n \Sigma_i \in \Surf(T)$
	and a family $\Psi = (\Psi_i)_{1\le i\le n}$ of maps $\Psi_i \in \smc_\partial \left( \int_{\Sigma_i} \cat{A}, \cat{A}^{\otimes \Legs(T^{(i)})}\right)$. 
	Since $\Surf$ is already a modular operad, we may evaluate $\Surf$ on $\Gamma$ and obtain a functor $\Gamma_* : \Surf(T)\to\Surf(T')$ sending $\Sigma$ to $\Gamma_* \Sigma$. 
	We will accomplish the definition of
	$	\left(  \cat{G}_\cat{A} \right) (\Gamma)$
	by assigning to $\Psi$ an object $\Gamma_*\Psi \in \smc_\partial \left( \int_{\Gamma_* \Sigma} \cat{A}, \cat{A}^{\otimes \Legs(T)}\right)$. 
	Then we can define \begin{align} 	\left(  \cat{G}_\cat{A} \right) (\Gamma)  (\Sigma,\Psi):= (\Gamma_*\Sigma,\Gamma_* \Psi) \ .     \label{eqnsetdef}    \end{align}

	The definition of $\Gamma_*\Psi$ is the hard part and done
	as follows:
	First we observe that $\Gamma$ induces a map
	\begin{align}
		\bigotimes_{i=1}^n \cat{A}^{\otimes \Legs(T^{(i)})} \to \cat{A}^{\otimes \Legs(T')} \ ;     \label{eqncontractmap}
	\end{align}	
	this is \emph{dual} to constructions used for the definition of the modular endomorphism operad in \cite[Section~2.3]{cyclic}. 
	Essentially, this map performs cyclic permutations
	and contracts copies of $\cat{A}$ via the pairing $\kappa :\cat{A}\otimes\cat{A}\to I$ --- just as prescribed by $\Gamma$. 
	As a consequence, we obtain the following map:
	
	\footnotesize
	\begin{align}
		\prod_{i=1}^n	\smc_\partial \left( \int_{\Sigma_i} \cat{A}, \cat{A}^{\otimes \Legs(T^{(i)})}\right)   \ra{\otimes} \smc_\partial \left( \bigotimes_{i=1}^n    \int_{\Sigma_i} \cat{A}, \bigotimes_{i=1}^n \cat{A}^{\otimes \Legs(T^{(i)})}\right) \ra{   \eqref{eqncontractmap}  }     \smc_\partial \left( \bigotimes_{i=1}^n    \int_{\Sigma_i} \cat{A}, \cat{A}^{\otimes \Legs(T')} \right)  \label{eqnfhdescend}
	\end{align}
	\normalsize
	The `$\partial$' reminds us of the restriction to module maps with respect to the boundary, which means precisely:
	\begin{itemize}\item  For the first
		step,
		these are maps of $\int_{\partial \Sigma_i \times [0,1]}\cat{A}$-modules in each factor.
		\item For the second step, these are maps of $\int_{\partial \Sigma \times \opint } \cat{A}$-modules.
		\item  For the third step, these are maps of $\int_{\partial \Gamma_*\Sigma \times \opint } \cat{A}$-modules,
		where
		$\bigotimes_{i=1}^n    \int_{\Sigma_i} \cat{A}$ is seen as $\int_{\partial \Gamma_* \Sigma \times \opint  }\cat{A}$-module by restriction along the algebra map \begin{align}
			\int_{\partial \Gamma_* \Sigma \times \opint  }\cat{A}  \to    \int_{\partial \Sigma \times \opint } \cat{A}  \end{align}induced by $\partial \Gamma_*\Sigma \subset \partial \Sigma$. 
	\end{itemize} 
	In order to obtain
	the desired map $\Gamma_*\Psi : \int_{\Gamma_* \Sigma} \cat{A} \to  \cat{A}^{\otimes \Legs(T')}$, we need to show that the image  of
	$\Psi$ under~\eqref{eqnfhdescend}
	that we will denote by $\Psi^\kappa$
	actually factors naturally through the map $\bigotimes_{i=1}^n    \int_{\Sigma_i} \cat{A} \to \int_{\Gamma_* \Sigma} \cat{A}$ induced by the gluing prescribed by $\Gamma$.  
	To this end, we will use the excision property of factorization homology which, in the case at hand, tells us the following: If $\Gamma$ prescribes a gluing of $r$ pairs of boundary components, then $\bigotimes_{i=1}^n    \int_{\Sigma_i}\cat{A}$ is a module over $\left(   \int_{\mathbb{S}^1\times\opint }   \cat{A} \right)^{\otimes r}$ in two ways; we denote the actions by $\alpha_\ell : \left(   \int_{\mathbb{S}^1\times\opint }   \cat{A} \right)^{\otimes r} \otimes \bigotimes_{i=1}^n    \int_{\Sigma_i} \cat{A} \to \bigotimes_{i=1}^n    \int_{\Sigma_i} \cat{A}$, $\ell=1,2$. 
	By the excision property of factorization homology, $\int_{\Gamma_* \Sigma} \cat{A}$ is the homotopy coequalizer of $\alpha_1$ and $\alpha_2$. 
	Hence, we need to show that $\Psi^\kappa: \bigotimes_{i=1}^n    \int_{\Sigma_i} \cat{A}\to \cat{A}^{\otimes \Legs(T')}$ coequalizes $\alpha_1$ and $\alpha_2$ up to isomorphism (this is structure and not just a property). 
	To see this, we first observe that $\bigotimes_{i=1}^n \cat{A}^{\otimes \Legs(T^{(i)})}$ also comes with two actions of $\left(   \int_{\mathbb{S}^1\times\opint }   \cat{A} \right)^{\otimes r}$, namely by action on the first and the second copy of $\cat{A}$ associated to a gluing pair.  We denote these two actions $\beta_1$ and $\beta_2$. 
	Now consider the diagram:
	\begin{equation}
		\begin{tikzcd}
			\left(   \int_{\mathbb{S}^1\times\opint }   \cat{A} \right)^{\otimes r} \otimes  \bigotimes_{i=1}^n    \int_{\Sigma_i} \cat{A}   \ar[rr,"\alpha_\ell"] \ar[dd," \id \otimes \bigotimes_{i=1}^n \Psi_i  ",swap] &&     \bigotimes_{i=1}^n    \int_{\Sigma_i} \cat{A}    \ar[dd,"\bigotimes_{i=1}^n \Psi_i"]  \ar[rr,"\Psi^\kappa"] && \cat{A}^{\otimes \Legs(T')} \\ \\ 
			\left(   \int_{\mathbb{S}^1\times\opint }   \cat{A} \right)^{\otimes r}   \otimes \bigotimes_{i=1}^n \cat{A}^{\otimes \Legs(T^{(i)})}   \ar[rr, swap," \beta_\ell"]  & &    \bigotimes_{i=1}^n \cat{A}^{\otimes \Legs(T^{(i)})}   \ar[uurr,swap,"  \eqref{eqncontractmap}   "]         \ . 
		\end{tikzcd}
	\end{equation}
	The left square commutes by a canonical isomorphism (this is part of data that makes the $\Psi_i$ module maps). The triangle on the right commutes by definition. 
	The needed isomorphism $\Psi^\kappa \alpha_1 \cong \Psi^\kappa  \alpha_2$ can now be 
	obtained by evaluating the above diagram at $\ell=1$ and $\ell=2$ and by combining these two cases. 
	This uses additionally the fact that~\eqref{eqncontractmap} homotopy coequalizes $\beta_1$ and $\beta_2$ thanks to the cyclicity of the pairing.	
	This completes the construction of $\Gamma_*\Psi$.
	
	Having completed the definition of $\Gamma_*\Psi$, we can define $\left(  \cat{G}_\cat{A} \right) (\Gamma)(\Sigma,\Psi)$ via~\eqref{eqnsetdef}. All of the constructions so far are functorial in module maps. Therefore, the definition extends to morphisms, thereby giving us the functor 	$	\left(  \cat{G}_\cat{A} \right) (\Gamma)$.
	This concludes the construction of $ \cat{G}_\cat{A}$ as modular operad.
	The canonical projection to the base of the Grothendieck construction yields a morphism $ \cat{G}_\cat{A}\to \SURF$
	of modular operads.

	\begin{lemma}\label{lemmasubopead}
	By	performing the Grothendieck construction~\eqref{eqnprescriptioncorolla}
	with the $\Omega_\cat{A}$-groupoids instead of 
		the groupoids
		$ \smc_\partial \left( \int_{-} \cat{A}, \cat{A}^{\otimes \Legs(T)}\right) $, the construction from Definition~\ref{defauxga} restricts in range and gives rise to a suboperad $\cat{G}_\cat{A}^\Omega \subset \cat{G}_\cat{A}$.
		\end{lemma}

\begin{proof} 
	This follows from the fact that the $\Omega_\cat{A}$-subgroupoids are stable 
	under compositions and cyclic permutations. The latter is obvious, 
	the former follows from the fact that the gluing of module 
	maps $\int_\Sigma \cat{A}\to\cat{A}^{\otimes \Legs(T)}$, 
	when applied to maps of the form $\PhiA(H)$, translates to the gluing of handlebodies.  
	More precisely, the functor
	$	\left(  \cat{G}_\cat{A} \right) (\Gamma)$
	associated to a morphism $\Gamma:T\to T'$ in $\Graphs$
	sends $\PhiA(H)$ for $H\in\Hbdy(T)$ to $\PhiA(\Gamma_* H)$.
	This is a consequence of the construction of the $\PhiA$-maps 
	in Proposition~\ref{propenvfh} and properties of the modular extension $\widehat{\cat{A}}$.
	\end{proof}

	So far we have maps $\cat{G}_\cat{A}^\Omega \to \cat{G}_\cat{A}\to\SURF$ of modular operads. 
	After applying the symmetric monoidal fundamental groupoid functor, we obtain a map $\Pi \cat{G}_\cat{A}^\Omega
	\to\Surf$ of groupoid-valued operads.
	
	\begin{definition}\label{DefsurfA}
	We define $\SurfA$ via \begin{align}\label{defsurfa} \SurfA:= \Pi \cat{G}_\cat{A}^\Omega \ , \end{align}
	thereby giving us a map $p_\cat{A}:\SurfA \to \Surf$ of modular operads. 
	\end{definition}

	\begin{theorem}\label{thmdefsurfa}
		For any cyclic framed $E_2$-algebra $\cat{A}$ in $\smc$ and $T \in \Graphs$, the modular operad $\SurfA$ is given by 
		\begin{align}
			\SurfA (T)   \simeq \Gr \left(     \Surf(T)^\op \to \Cat \ , \quad \Sigma \mapsto \Omega_\cat{A}(\Sigma) \right)   \label{eqnsurfA}
		\end{align} up to a canonical equivalence.
		Moreover, it comes with a map
		$s_\cat{A}:\Hbdy \to \SurfA$ of modular operads over $\Surf$ sending $H\in\Hbdy(T)$ to $\PhiA(H)$.
	\end{theorem}

		This exhibits $\SurfA(T)$ as the disjoint union of the homotopy quotients of $\Omega_\cat{A}(\Sigma)$ by $\Map(\Sigma)$, where $\Sigma$ runs over all operations in $\Surf(T)$.

	\begin{proof}[\slshape Proof of Theorem~\ref{thmdefsurfa}]
	We can write $\SurfA(T)$ again as a Grothendieck construction over the homotopy fibers of $p_\cat{A}:\SurfA \to \Surf$, see \cite{hollander}, but it is unclear that the homotopy fiber over $\Sigma \in \Surf(T)$ is actually the groupoid $\Omega_\cat{A}(\Sigma)$. 
			By construction the homotopy fiber of $\cat{G}_\cat{A}^\Omega \to \SURF$ over $\Sigma$ is $\Omega_\cat{A}(\Sigma)$, but the homotopy fiber will generally not commute with taking the fundamental groupoid. 
			To see that $\Omega_\cat{A}(\Sigma)$ is indeed the fiber of $p_\cat{A}$ over $\Sigma$, we look at the long exact sequence of the homotopy groups for the fibration $\cat{G}_\cat{A}^\Omega(T) \to \SURF(T)$. To this end, choose a base point in the fiber over $\Sigma$, i.e.\ some $\PhiA(H)$ with $\partial H=\Sigma$. 
			With the definition~\eqref{defsurfa}, we find the exact sequence
			\begin{center}
				\begin{equation}\centering
					\hspace*{-5cm}\begin{tikzcd}
						1\ar[rrr] &&&
						\pi_2(   \cat{G}_\cat{A}^\Omega(T),\PhiA(T)  )  	\ar[rrr]
						&&& \pi_1(\Diff_0(\Sigma))   \ar[out=355,in=175,dllllll]    \\
						\pi_1(\Omega_\cat{A}(\Sigma),\PhiA(H))	 \ar[rrr] &&&  \pi_1(\SurfA(T) , \PhiA(H)) \ar[rrr] &&& \Map(\Sigma) \ar[out=355,in=175,dllllll] \\ \pi_0(\Omega_\cat{A}(\Sigma))   \ar[rrr] &&&   \pi_0( \SurfA(T)  ) \ar[rrr] &&& \pi_0(\Surf(T)) \ , 
					\end{tikzcd} 
			\end{equation}\end{center}
			where $\Diff_0(\Sigma)$ is the unit component of the topological group $\Diff(\Sigma)$. 
			For $\Omega_\cat{A}(\Sigma)$ to be the homotopy fiber of $p_\cat{A}$ over $\Sigma$, we need $\pi_1(\Omega_\cat{A}(\Sigma),\PhiA(H)) \to \pi_1(\SurfA(T) , \PhiA(H))$ to be injective. By exactness this is the case if and only if $\pi_1(\Diff_0(\Sigma))=\pi_2(\SURF(T), \Sigma) \to \pi_1(\Omega_\cat{A}(\Sigma),\PhiA(H))$ is trivial. 
			This map is indeed trivial because the map $\Diff_0(H)\to\Diff_0(\Sigma)$ is an equivalence
			(this is a consequence of \cite[Theorem~2]{hatcher}, see also the comments in~\cite[Section~2]{mwdiff})
			and because $\PhiA(H)$ is a homotopy fixed point under the $\Diff(H)$-action by Corollary~\ref{eqnphicomp}.
			This tells us that $p_\cat{A}:\SurfA \to \Surf$ is indeed a map with homotopy fiber
			$\Omega_\cat{A}(\Sigma)$ over $\Sigma$, thereby giving us~\eqref{eqnsurfA}.
			
			Finally, the fact the map $s_\cat{A}:\Hbdy \to \SurfA$ is a map of modular operads over $\Surf$ follows from the compatibility of the $\PhiA$-maps with gluing, their equivariance behavior from Corollary~\ref{corxi} and $p_\cat{A} \PhiA(H)=\partial H$. 
	\end{proof}

\subsection{Connectedness of the $\Omega_\cat{A}$-groupoids: General connectedness and reduction to genus one\label{secconnectedgenusone}}
We will now investigate the connectedness of the $\Omega_\cat{A}$-groupoids. This will inform the definition of a connected cyclic framed $E_2$-algebra in the next subsection.
 
To this end, we need to establish some terminology: 
With the presentation of $\Map(\Sigma)$ given in 
\cite[Section~4.4.4]{farbmargalit} we can find two handlebodies $H$ and $H'$ such that the subgroups $\Map(H)$ and $\Map(H')$ \emph{together} generate $\Map(\Sigma)$. We call such a pair $(H,H')$ of handlebodies a \emph{generating pair of handlebodies with boundary $\Sigma$}.

\begin{lemma}\label{lemmaconectedcompl}
	For a cyclic framed $E_2$-algebra $\cat{A}$ in $\cat{S}$, the groupoid $\Omega_\cat{A}(\Sigma)$ is connected if and only if
	for some generating pair $(H,H')$ of handlebodies with boundary $\Sigma$ the $\int_{\partial \Sigma \times  \opint} \cat{A}$-module maps $\PhiA(H)$ and $\PhiA(H')$ are isomorphic as module maps.
	\end{lemma}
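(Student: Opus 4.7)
The ``only if'' direction is immediate from Definition~\ref{defomega}: if $\Omega_\cA(\Sigma)$ is connected, then any two $\PhiA$-maps coming from handlebodies with boundary $\Sigma$ are isomorphic as module maps, in particular this holds for the chosen generating pair.

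For the ``if'' direction, I would first reduce the statement to a single ``orbit'' statement. Fix a handlebody $H_0$ with $\partial H_0=\Sigma$; as recorded in the proof of Proposition~\ref{propisofib}, every handlebody with boundary $\Sigma$ is (up to a diffeomorphism relative to the boundary) of the form $f.H_0$ for some mapping class $f\in\Map(\Sigma)$. Therefore $\Omega_\cA(\Sigma)$ is connected if and only if $\PhiA(f.H_0)\cong \PhiA(H_0)$ as module maps for every $f\in\Map(\Sigma)$. Proposition~\ref{propenvfh2x} supplies the canonical identification $\PhiA(H_0)\cong \PhiA(f.H_0)\circ f_*$ of $\int_{\partial\Sigma\times\opint}\cA$-module maps, and since $f_*$ is an equivalence of $\int_\Sigma \cA$, the desired statement is equivalent to
\begin{align}
\PhiA(H_0)\circ f_* \cong \PhiA(H_0) \quad \text{as module maps, for every } f\in\Map(\Sigma).
\end{align}
In other words, I want to show that the condition on the generating pair implies that every element of $\Map(\Sigma)$ acts trivially on $\PhiA(H_0)$ in $\Omega_\cA(\Sigma)$.

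Next I would verify this triviality for the generators. For $f\in\Map(H_0)$, Corollary~\ref{corxi}, applied to a diffeomorphism $g$ of $H_0$ with $\partial g=f$, yields an isomorphism $\PhiA(H_0)\circ f_*\cong \PhiA(H_0)$. Using the hypothesis $\PhiA(H_0)\cong \PhiA(H_0')$ in $\Omega_\cA(\Sigma)$, the same argument applied to $H_0'$ gives, for $f\in\Map(H_0')$,
\begin{align}
\PhiA(H_0)\circ f_* \cong \PhiA(H_0')\circ f_* \cong \PhiA(H_0') \cong \PhiA(H_0).
\end{align}
Since $\Map(H_0)\cup\Map(H_0')$ generates $\Map(\Sigma)$ and since the operation $\Psi\mapsto \Psi\circ f_*$ defines a (strict) action of $\Map(\Sigma)$ on the set of isomorphism classes of objects of $\smc_\partial(\int_\Sigma\cA,\cA^{\otimes\Legs(T)})$, triviality propagates from the generators to all of $\Map(\Sigma)$, which completes the proof.

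The one point that requires a small amount of care, and which I expect to be the main technical subtlety rather than a true obstacle, is to ensure that the composition of isomorphisms is performed consistently as $\int_{\partial\Sigma\times\opint}\cA$-module isomorphisms; this is guaranteed because Proposition~\ref{propenvfh2x} and Corollary~\ref{corxi} are phrased in these terms, and the action of $\Map(\Sigma)$ on $\int_\Sigma\cA$ recalled in Remark~\ref{remactiononfh} is by $\int_{\partial\Sigma\times\opint}\cA$-module equivalences.
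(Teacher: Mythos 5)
Your proof is correct and follows essentially the same route as the paper: reduce connectedness to triviality of the precomposition action of $\Map(\Sigma)$ on the isomorphism class of $\PhiA(H_0)$ via Proposition~\ref{propenvfh2x}, verify this on the two handlebody subgroups using Corollary~\ref{corxi} together with the hypothesis $\PhiA(H_0)\cong\PhiA(H_0')$, and propagate to all of $\Map(\Sigma)$. The only difference is cosmetic: where you invoke the fact that the stabilizer of an isomorphism class under the (mapping-class-group) action is a subgroup containing the generating subgroups, the paper instead writes out an explicit word decomposition $f=a_1ub_1u^{-1}\cdots a_nub_nu^{-1}$ and reduces to the three relations $\PhiA(H)s_*^{\pm1}\cong\PhiA(H)$ and $\PhiA(H)(\partial a)_*\cong\PhiA(H)$.
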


\begin{proof}
	If $\Omega_\cat{A}(\Sigma)$ is connected, then clearly $\PhiA(H)\cong\PhiA(H')$ for any two handlebodies with boundary $\Sigma$ (simply by definition).
	Let us prove the converse:
	After the choice of an isomorphism $u : H \to H'$, we have a group isomorphism
	\begin{align}\Map(H) \to \Map(H') \ , \quad a \mapsto uau^{-1} \ . 
	\end{align}
	With $s:= \partial u\in \Map(\Sigma)$, 
	we may now without loss of generality assume that $H'=s.H$, see Remark~\ref{remisofibprop2}.
	For arbitrary handlebody $\widetilde H$ with boundary $\Sigma$,
	we now need to show $\PhiA(\widetilde H)\cong \PhiA(H)$ as module maps. First note that $\widetilde H$ can be written as $\widetilde H = f.H$ for some $f\in \Map(\Sigma)$. Since $\Map(H)$ and $\Map(H')$ generate $\Map(\Sigma)$ and since $\Map(H)\cong \Map(H')$ via conjugation with $u$, we can write
	$f= a_1 ub_1 u^{-1} \dots a_n ub_nu^{-1} $ with $a_1,b_1,\dots,a_n,b_n \in \Map(H)$. 
	With Proposition~\ref{propenvfh2x}, we find
	\begin{align}
	\PhiA(\widetilde H)=\PhiA(f.H)\cong\PhiA(H) f_*^{-1} \cong \PhiA(H) s_* {\partial b_n^{-1}}_* s^{-1}_* {\partial a_n^{-1}}_* \dots s_* {\partial b	_1^{-1}}_* s^{-1}_* {\partial a_1^{-1}}_* \ , 
	\end{align}
	where $\cong$ is always an isomorphism of module maps.
	For this reason, it suffices to prove:
	\begin{align}
	\PhiA(H) s_* &\cong \PhiA(H) \ , \label{eqnconn1} \\
	\PhiA(H) s_*^{-1} &\cong \PhiA(H) \ , \label{eqnconn2} \\
	\PhiA(H) \partial a_* &\cong \PhiA(H) \quad \text{for all}\quad a \in \Map(H)  \label{eqnconn3}\ . 
	\end{align}
	First we use Proposition~\ref{propenvfh2x} to conclude $\PhiA(H')=\PhiA(s.H)=\PhiA(H)s_*^{-1}$. With $\PhiA(H')\cong\PhiA(H)$ --- which holds by assumption --- this yields $\PhiA(H)\cong \PhiA(H)s_*^{-1}$ and proves~\eqref{eqnconn2}. After applying $s_*$ from the right to both sides, we obtain~\eqref{eqnconn1}. Finally, \eqref{eqnconn3} is a consequence of the homotopy
	$\Diff(H)$-fixed point structure of $\PhiA(H)$, see 
	Corollary~\ref{corxi}.
	\end{proof}

\begin{proposition}\label{propconnected}
	For a cyclic framed $E_2$-algebra $\cat{A}$ in $\cat{S}$, the following conditions are equivalent:
	\begin{pnum}
		\item All groupoids $\Omega_\cat{A}(\Sigma)$, where $\Sigma$ runs over all surfaces, are connected.
		\label{propconnectedi}

		\item For some generating pair $(H,H')$
		of handlebodies whose  boundary  is the torus $\mathbb{T}^2_1$
		with one boundary component,
		the handlebody skein modules $\PhiA(H)$ and $\PhiA(H')$ are isomorphic as $\int_{\mathbb{S}^1 \times [0,1]} \cat{A}$-module maps.\label{propconnectediv}
		\end{pnum}
	\end{proposition}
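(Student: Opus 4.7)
The direction \ref{propconnectedi}$\Rightarrow$\ref{propconnectediv} is immediate: specializing the hypothesis to $\Sigma=\mathbb{T}^2_1$ and invoking Lemma~\ref{lemmaconectedcompl} yields the required generating pair.

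For the converse \ref{propconnectediv}$\Rightarrow$\ref{propconnectedi}, I would fix an arbitrary surface $\Sigma$ and any handlebody $H$ with $\partial H=\Sigma$. By Proposition~\ref{propenvfh2x} every other handlebody with boundary $\Sigma$ is (up to isomorphism relative boundary) of the form $f.H$ for some $f\in\Map(\Sigma)$, and satisfies $\PhiA(f.H)\cong\PhiA(H)\circ f_*^{-1}$ as $\int_{\partial\Sigma\times[0,1]}\cat{A}$-module maps. Hence $\Omega_\cat{A}(\Sigma)$ is connected if and only if $\PhiA(H)\circ f_*\cong\PhiA(H)$ for every $f\in\Map(\Sigma)$. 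Corollary~\ref{corxi} already provides this invariance for $f\in\Map(H)$, so it remains to verify the isomorphism on a set of representatives of the cosets $\Map(\Sigma)/\Map(H)$.

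The plan is to import the assumption --- which, by Lemma~\ref{lemmaconectedcompl}, amounts to connectedness of $\Omega_\cat{A}(\mathbb{T}^2_1)$ --- into $\Sigma$ via the modular operadic structure of $\SurfA$ from Theorem~\ref{thmdefsurfa}. Concretely, whenever $\Sigma$ admits an operadic decomposition $\Sigma=\Gamma_*(\Sigma_0\sqcup\Sigma_1)$ with $\Sigma_0\cong\mathbb{T}^2_1$, and $H$ decomposes compatibly as $H=\Gamma_*(H_0\sqcup H_1)$, the operadic composition functor $\Omega_\cat{A}(\Sigma_0)\times\Omega_\cat{A}(\Sigma_1)\to\Omega_\cat{A}(\Sigma)$ established in the proof of Theorem~\ref{thmdefsurfa} sends the pair $(\PhiA(H_0),\PhiA(H_1))$ to $\PhiA(H)$. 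Connectedness of $\Omega_\cat{A}(\mathbb{T}^2_1)$ then yields $\PhiA(\Gamma_*(H'_0\sqcup H_1))\cong\PhiA(H)$ for any other handlebody $H'_0$ filling $\Sigma_0$, which on the level of $\Sigma$ amounts to invariance of $\PhiA(H)$ under every mapping class of $\Sigma$ supported on $\Sigma_0$.

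The hard part will be the combinatorial reduction showing that, by varying the $\mathbb{T}^2_1$-subsurface $\Sigma_0\subset\Sigma$ and the compatible operadic decomposition of $H$ (allowing replacements of $H$ by $\Map(H)$-translates), the invariances thus obtained span a generating set of $\Map(\Sigma)/\Map(H)$. This reduction rests on the classical Lickorish--Humphries generation result: $\Map(\Sigma)$ is generated by Dehn twists along non-separating simple closed curves, each of which, together with a suitable dual curve, is supported on a genus-one subsurface with one boundary component. Arranging each such subsurface to be compatible with $H$ (modulo modifications in $\Map(H)$) and combining the resulting $\mathbb{T}^2_1$-invariances with the invariance of $\PhiA(H)$ under $\Map(H)$ from Corollary~\ref{corxi} yields invariance of $\PhiA(H)$ under all of $\Map(\Sigma)$, and hence connectedness of $\Omega_\cat{A}(\Sigma)$.
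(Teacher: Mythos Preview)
Your overall strategy --- importing the genus-one connectedness through the operadic gluing of Theorem~\ref{thmdefsurfa} --- matches the paper's. The paper, however, organizes the reduction differently: rather than fixing $H$ and trying to realize each Lickorish--Humphries generator inside an $H$-compatible $\mathbb{T}^2_1$ piece, it invokes the Lego Teichm\"uller game of Bakalov--Kirillov. A cut system $C$ for $\Sigma$ determines a handlebody $H_C$ via the genus-zero filling, and the functor $T:\widetilde{\catf{C}}(\Sigma)\to\Omega_\cat{A}(\Sigma)$, $C\mapsto\PhiA(\cup H_C)$ is essentially surjective. The generating moves are $\bar{\text{F}}$ (delete a cut; handlebody unchanged) and $\bar{\text{S}}$ (swap a cut on a $\mathbb{T}^2_1$ subsurface of the \emph{current} pants decomposition); the latter lands in $\Omega_\cat{A}(\Sigma)$ by the genus-one hypothesis and operadic gluing. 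Connectedness of $\widetilde{\catf{C}}(\Sigma)$, which follows from~\cite[Theorem~7.9]{bakifm}, then forces $\Omega_\cat{A}(\Sigma)$ to be connected.

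The gap in your argument is the sentence ``Arranging each such subsurface to be compatible with $H$ (modulo modifications in $\Map(H)$).'' For this you need: every non-separating simple closed curve on $\partial H$ lies, after a $\Map(H)$-translate, in a $\mathbb{T}^2_1$ subsurface whose single boundary curve bounds an embedded disk in $H$. This is not a consequence of Lickorish--Humphries alone; it is a statement about the $\Map(H)$-orbits of curves on $\partial H$ relative to the disk set of $H$, and you have not supplied an argument. The paper's formulation sidesteps exactly this difficulty: in the Lego Teichm\"uller approach the $\mathbb{T}^2_1$ piece at each $\bar{\text{S}}$-move is automatically compatible with the \emph{current} handlebody $H_C$ (its boundary is a cut curve of $C$), so one never has to force compatibility with a single fixed $H$. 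If you want to salvage your route, you would in effect be reproving the connectedness of the cut-system groupoid, at which point you may as well quote it.
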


\begin{proof}
	Clearly, \ref{propconnectedi} $\Rightarrow$ \ref{propconnectediv}.  Let us prove 
\ref{propconnectediv} $\Rightarrow$ \ref{propconnectedi}: 
By Lemma~\ref{lemmaconectedcompl}  the condition~\ref{propconnectediv} implies that $\Omega_\cat{A}(\mathbb{T}_1^2)$ is connected.
Therefore, it remains to prove that connectedness of $\Omega_\cat{A}(\mathbb{T}^2_1)$ implies connectedness of $\Omega_\cat{A}(\Sigma)$ for all surfaces $\Sigma$.  
	For this, we will make use of the Lego Teichmüller game from~\cite{bakifm}. For each surface $\Sigma$, one can define a groupoid $\catf{C}(\Sigma)$ of cut systems of $\Sigma$. Roughly, these are pair of pants decompositions; for details, we refer to \cite[Section~7.1-7.3]{bakifm}. 
	The morphisms are generated by the following two moves that can be applied to subsurfaces (the cuts are displayed in blue):
	\begin{equation}
	\begin{tikzpicture}[scale=0.5]
		\begin{pgfonlayer}{nodelayer}
			\node [style=none] (0) at (-10, 6) {};
			\node [style=none] (1) at (-10, 4) {};
			\node [style=none] (2) at (-3, 6) {};
			\node [style=none] (3) at (-3, 4) {};
			\node [style=none] (4) at (3, 6) {};
			\node [style=none] (5) at (3, 4) {};
			\node [style=none] (6) at (-6, 6) {};
			\node [style=none] (7) at (-6, 4) {};
			\node [style=none] (8) at (-10, -4) {};
			\node [style=none] (9) at (-10, -6) {};
			\node [style=none] (10) at (-3, -4) {};
			\node [style=none] (11) at (-3, -6) {};
			\node [style=none] (12) at (-6, -4) {};
			\node [style=none] (13) at (-6, -6) {};
			\node [style=none] (14) at (12, 5) {};
			\node [style=none] (15) at (6, 5.25) {};
			\node [style=none] (16) at (10, 5.25) {};
			\node [style=none] (17) at (7, 4.5) {};
			\node [style=none] (18) at (9, 4.5) {};
			\node [style=none] (19) at (8, 8) {};
			\node [style=none] (20) at (8, 2) {};
			\node [style=none] (21) at (8, 4.25) {};
			\node [style=none] (22) at (8, 2) {};
			\node [style=none] (23) at (3, -4) {};
			\node [style=none] (24) at (3, -6) {};
			\node [style=none] (25) at (12, -5) {};
			\node [style=none] (26) at (6, -4.75) {};
			\node [style=none] (27) at (10, -4.75) {};
			\node [style=none] (28) at (7, -5.5) {};
			\node [style=none] (29) at (9, -5.5) {};
			\node [style=none] (30) at (8, -2) {};
			\node [style=none] (31) at (8, -8) {};
			\node [style=none] (32) at (5, -5) {};
			\node [style=none] (33) at (11, -5) {};
			\node [style=none] (34) at (8, 1) {};
			\node [style=none] (35) at (8, -1) {};
			\node [style=none] (36) at (7, 0) {$\bar{\text{S}}$};
			\node [style=none] (37) at (-6, 3) {};
			\node [style=none] (38) at (-6, -3) {};
			\node [style=none] (39) at (-7, 0) {$\bar{\text{F}}$};
		\end{pgfonlayer}
		\begin{pgfonlayer}{edgelayer}
			\draw [bend right=90, looseness=1.25] (1.center) to (0.center);
			\draw [bend right=90, looseness=1.25] (0.center) to (1.center);
			\draw [bend right=90, looseness=1.25] (3.center) to (2.center);
			\draw (0.center) to (2.center);
			\draw (1.center) to (3.center);
			\draw [bend right=90, looseness=1.25] (5.center) to (4.center);
			\draw [bend right=90, looseness=1.25] (4.center) to (5.center);
			\draw [style=mydots, bend right=270, looseness=1.25] (3.center) to (2.center);
			\draw [bend right=90, looseness=1.25] (9.center) to (8.center);
			\draw [bend right=90, looseness=1.25] (8.center) to (9.center);
			\draw [bend right=90, looseness=1.25] (11.center) to (10.center);
			\draw (8.center) to (10.center);
			\draw (9.center) to (11.center);
			\draw [style=mydots, bend right=270, looseness=1.25] (11.center) to (10.center);
			\draw [style=blue, bend left=90, looseness=1.25] (6.center) to (7.center);
			\draw [style=bluedashed, bend right=90, looseness=1.25] (6.center) to (7.center);
			\draw [bend right=45, looseness=1.25] (15.center) to (16.center);
			\draw [bend left] (17.center) to (18.center);
			\draw [in=90, out=0] (19.center) to (14.center);
			\draw [in=0, out=-90] (14.center) to (20.center);
			\draw [in=0, out=180] (20.center) to (5.center);
			\draw [in=0, out=180] (19.center) to (4.center);
			\draw [style=blue, bend left=90, looseness=1.25] (21.center) to (22.center);
			\draw [style=bluedashed, bend right=90, looseness=1.25] (21.center) to (22.center);
			\draw [bend right=90, looseness=1.25] (24.center) to (23.center);
			\draw [bend right=90, looseness=1.25] (23.center) to (24.center);
			\draw [bend right=45, looseness=1.25] (26.center) to (27.center);
			\draw [bend left] (28.center) to (29.center);
			\draw [in=90, out=0] (30.center) to (25.center);
			\draw [in=0, out=-90] (25.center) to (31.center);
			\draw [in=0, out=180] (31.center) to (24.center);
			\draw [in=0, out=180] (30.center) to (23.center);
			\draw [style=end arrow] (34.center) to (35.center);
			\draw [style=end arrow] (37.center) to (38.center);
			\draw [style=blue, bend left=90, looseness=0.75] (32.center) to (33.center);
			\draw [style=blue, bend right=90] (32.center) to (33.center);
		\end{pgfonlayer}
	\end{tikzpicture}
	\label{moves}
	\end{equation}
	The $\bar{\text{F}}$-move deletes a cut in a cylindrical region, assuming of course that we are still left with a cut system afterwards. The $\bar{\text{S}}$-move replaces a cut on $\mathbb{T}^2_1$ with a transversal one.
	If we freely generate a groupoid with these moves, we obtain a groupoid $\widetilde{\catf{C}}(\Sigma)$.  The actual groupoid $\catf{C}(\Sigma)$ of cut system is then obtained by imposing further relations. These will however not be relevant for us. For us, it suffices to know that $\catf{C}(\Sigma)$ is connected and simply connected \cite[Theorem~7.9]{bakifm}. This implies that $\widetilde{\catf{C}}(\Sigma)$ is connected. 
	We will now construct a functor
\begin{align}T:	\widetilde{\catf{C}}(\Sigma) \to \Omega_\cat{A}(\Sigma) \ . \label{eqnthefunctorT}
\end{align}
For a cut system $C\in \widetilde{\catf{C}}(\Sigma)$, we take the genus zero surface $\Sigma_C$ obtained by cutting $\Sigma$ at $C$. We fill each component of $\Sigma_C$ with a handlebody; this choice is essentially unique because $\Hbdy \to \Surf$ is an equivalence in genus zero. 
This gives us a handlebody $H_C$ with $\partial H_C=\Sigma_C$.
The cut system describes a gluing operation from $\Sigma_C$ to $\Sigma$, i.e.\ $\Sigma = \cup \Sigma_C$, or more formally, $\Sigma = \Gamma_* \Sigma_C$ for a morphism $\Gamma$ in $\Graphs$ described by $C$.  The operadic composition of $\SurfA$ sends $\PhiA(H_C)$ to an object that we define to be $T(C)$. Thanks to 
 the proof of  Lemma~\ref{lemmasubopead}, we know \begin{align}
T(C)\cong \PhiA(\cup H_C) \ . \label{eqnTC}
\end{align}
Every handlebody with boundary $\Sigma$ can be obtained as $\cup H_C$ for some cut system $C$ for $\Sigma$. Thanks to~\ref{eqnTC}, this makes $T$, so far just as an assignment on object level, essentially surjective. 
 To an $\bar{\text{F}}$-move $C \to C'$, we associate an isomorphism $T(C)\cong T(C')$ that we obtain from~\eqref{eqnTC} and the fact that $\cup H_C=\cup H_{C'}$. 
 In order to define $T$ an  $\bar{\text{S}}$-move $C \to C'$, we can, thanks to the compatibility of the $\PhiA$-maps with gluing, see~\eqref{eqnTC}, assume that $\Sigma = \mathbb{T}_1^2$. In that case, we just assign to $C\to C'$ some morphism $T(C) \to T(C')$ existing by the assumption that $\Omega_\cat{A}(\mathbb{T}_1^2)$ is connected.
 In $\widetilde{\catf{C}}(\Sigma)$ we do not impose further relations on the moves. Therefore, the assignments provide us with the functor~\eqref{eqnthefunctorT}. Since $\widetilde{\catf{C}}(\Sigma)$ is connected and $T$, as already remarked, is essentially surjective, $\Omega_\cat{A}(\Sigma)$ must be connected.
	\end{proof}

\subsection{The notion of connectedness\label{secextensions}}
Having established Proposition~\ref{propconnected}, we can now finally define:

\begin{definition}\label{defconnected}
	We call
	a cyclic framed $E_2$-algebra $\cat{A}$ or, equivalently,
	a self-dual balanced braided 
	algebra in $\smc$
	\emph{connected}
	if any of the two equivalent conditions from Proposition~\ref{propconnected} hold.
\end{definition}

With the results and constructions of the previous two subsection, we can now
see easily that, for any connected cyclic framed $E_2$-algebra $\cat{A}$,
the map $\SurfA \to \Surf$ is an extension.
In fact, we will then see later in Theorem~\ref{thmuniversal} that these extensions are universal in a precise sense.

\begin{theorem}\label{thmdefsurfa2}
	For a cyclic framed $E_2$-algebra $\cat{A}$ in $\cat{S}$, the map
	$p_\cat{A}: \SurfA\to\Surf$
	is an extension if and only if $\cat{A}$ is  connected.
	In that case, the extension of $\Map(\Sigma)$ provided by $\SurfA$ takes the form of a
	short  exact sequence of groups	\begin{align}
		1 \to \Aut_\partial(\PhiA (H)  ) \to \MapA( \Sigma) \to \Map(\Sigma)\to 1 \quad  \text{with}\quad \MapA( \Sigma) = \pi_1(\SurfA(T),\Sigma) \   \label{eqnsesextension} ,
	\end{align}
	where
	$\Aut_\partial(\PhiA (H)  )$ denotes the group of
	 automorphisms of $\PhiA(H):\int_{\Sigma} \cat{A}\to \cat{A}^{\otimes \Legs(T)}$ as maps of $\int_{\partial \Sigma \times \opint }\cat{A}$-modules. 
\end{theorem}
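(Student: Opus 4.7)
The plan is to extract the statement directly from the Grothendieck-construction description of $\SurfA$ built in Theorem~\ref{thmdefsurfa} together with Proposition~\ref{propenvfh2x} and Remark~\ref{remextinner}. The main input is already available; the work is identifying the correct fibers and recognising a homotopical triviality of the monodromy.

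First, for the equivalence of the two conditions, the key observation is that by Theorem~\ref{thmdefsurfa} the fiber (equivalently, homotopy fiber, since the projection out of a Grothendieck construction is a fibration) of $p_\cat{A}:\SurfA(T)\to\Surf(T)$ over $\Sigma\in\Surf(T)$ is precisely $\Omega_\cat{A}(\Sigma)$. Hence Definition~\ref{defextension} reduces $p_\cat{A}$ being an extension to connectedness of $\Omega_\cat{A}(\Sigma)$ for every $T\in\Graphs$ and $\Sigma\in\Surf(T)$. For non-connected $T=\sqcup T^{(i)}$, the fiber factorises as $\prod_i \Omega_\cat{A}(\Sigma_i)$, so the connectedness condition reduces to corollas. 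By Definition~\ref{defconnected}, this is exactly the condition that $\cat{A}$ be connected.

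Next, assuming $\cat{A}$ is connected, I would produce the short exact sequence~\eqref{eqnsesextension} by invoking the long exact sequence of homotopy groups associated with the fibration $p_\cat{A}$ with base point $\PhiA(H)\in\Omega_\cat{A}(\Sigma)$. Since $\Omega_\cat{A}(\Sigma)$ is a groupoid, its higher homotopy groups vanish, and $\pi_1(\Omega_\cat{A}(\Sigma),\PhiA(H))=\Aut_\partial(\PhiA(H))$ by definition of $\Omega_\cat{A}(\Sigma)$ as a full subgroupoid of $\smc_\partial(\int_\Sigma\cat{A},\cat{A}^{\otimes\Legs(T)})$. The argument in the last step of the proof of Theorem~\ref{thmdefsurfa} already established injectivity at the $\pi_1$-level; combined with surjectivity onto $\Map(\Sigma)=\pi_1(\Surf(T),\Sigma)$ (also verified there) and connectedness of the fiber, the long exact sequence collapses to~\eqref{eqnsesextension}.

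It remains to show that the outer action of $\Map(\Sigma)$ on the kernel $\Aut_\partial(\PhiA(H))$ is trivial, which I would verify via condition~(I2) of Remark~\ref{remextinner}. The Dedecker--Schreier functor associated to $f\in\Map(\Sigma)$ is the monodromy of the Grothendieck fibration, i.e.\ the functor $f^*:\Omega_\cat{A}(\Sigma)\to\Omega_\cat{A}(\Sigma)$ given on module maps by precomposition with $f_*:\int_\Sigma\cat{A}\to\int_\Sigma\cat{A}$. By Proposition~\ref{propenvfh2x} applied to the diffeomorphism $f^{-1}:\Sigma\to\Sigma$, we obtain a canonical isomorphism of module maps $\PhiA(H)\circ f_*\cong \PhiA(f^{-1}.H)$, so $f^*$ sends $\PhiA(H)$ to an object lying in the same connected component. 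Since $\cat{A}$ is connected, $\Omega_\cat{A}(\Sigma)$ has a single isomorphism class; any self-equivalence of a connected groupoid with a single isomorphism class is isomorphic to the identity functor. This is condition~(I2), which by Remark~\ref{remextinner} implies that the induced automorphism of $\Aut_\partial(\PhiA(H))$ is inner. The only mildly delicate point is to check that the Grothendieck-construction monodromy is indeed implemented by precomposition with $f_*$, which I expect to be a direct unwinding of the construction of $\SurfA$ in Theorem~\ref{thmdefsurfa} (step~\ref{proofstep1} there), and this is the only place where a careful diagram-chase is required.
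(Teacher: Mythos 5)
Your identification of the fibers of $p_\cat{A}$ with the groupoids $\Omega_\cat{A}(\Sigma)$ and the resulting equivalence \emph{extension $\Leftrightarrow$ connectedness}, as well as the derivation of the short exact sequence (which is just the specialization of~\eqref{eqnsesext}, equivalently your long exact sequence argument), agree with the paper. The problem is the last step. The claim that \emph{any self-equivalence of a connected groupoid with a single isomorphism class is isomorphic to the identity functor} is false: a self-equivalence of $\star\DS G$ is, up to natural isomorphism, an automorphism of $G$, and it is isomorphic to the identity if and only if that automorphism is inner (e.g.\ negation on $\star\DS\mathbb{Z}/3$ is a self-equivalence not isomorphic to the identity). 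Knowing that $f^*$ sends $\PhiA(H)$ into the same isomorphism class is vacuous in a connected groupoid, so at this point you have assumed precisely what the final assertion of the theorem asks you to prove, namely that the monodromy acts on $\Aut_\partial(\PhiA(H))$ by inner automorphisms.

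What closes this gap in the paper is the \emph{coherent} trivialization available for handlebody mapping classes, together with a generation argument. By Corollary~\ref{corxi}, $\PhiA(H)$ carries a homotopy $\Diff(H)$-fixed point structure, so the inclusion $\chi_H : \star\DS\Aut_\partial(\PhiA(H)) \to \Omega_\cat{A}(\Sigma)$ (an equivalence by connectedness, since it is fully faithful and essentially surjective) is $\Map(H)$-equivariant for the \emph{trivial} action on the source; hence every element of $\Map(H)$ acts on $\Omega_\cat{A}(\Sigma)$ by a functor isomorphic to the identity. Since the handlebody groups for the various $H$ with $\partial H=\Sigma$ generate $\Map(\Sigma)$, every mapping class acts by a composite of such functors and is therefore itself isomorphic to the identity, which is condition~(I2) of Remark~\ref{remextinner}. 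Your isomorphism $\PhiA(H)\circ f_*\cong\PhiA(f^{-1}.H)$ from Proposition~\ref{propenvfh2x} is a genuine ingredient, but on its own it only controls the image of one object, not the action on automorphism groups; the coherence from Corollary~\ref{corxi} and the generation by handlebody groups are the missing inputs.
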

\begin{proof}
		It follows from~\eqref{eqnsurfA}
	immediately that $p_\cat{A}$ is  
	an extension if and only if the groupoids $\Omega_\cat{A}(\Sigma)$ are connected. 
	In that case,	 the resulting extension of mapping class groups takes the form
	$	1 \to \Aut_\partial(\PhiA (H)  ) \to \MapA( \Sigma) \to \Map(\Sigma)\to 1$ (this just amounts to specializing the sequence~\eqref{eqnsesext} to the case at hand).

	\end{proof}

The following observation is a far-reaching generalization of \cite[Remark~2.3]{masbaumroberts}:
\begin{corollary}\label{corequivskein}
	Let $\cat{A}$ be a connected cyclic framed $E_2$-algebra.
	With the notation as in Theorem~\ref{thmdefsurfa2} and the skein algebra action 
	\begin{align} \Add:\SkAlg_\cat{A}(\Sigma)\to \End(\widehat{\cat{A}}(H))\cong \End(\FA(\Sigma))\end{align} from~\eqref{eqnaddmap},
	we have for
	 $f \in \Map(\Sigma)$ and $a \in \SkAlg_\cat{A}(\Sigma)$
	that 
	\begin{align}
		\Add(f.a)=\FA(f)\Add(a)\FA(f)^{-1} \ . 
	\end{align}
	Here $\FA(f)$ has to be understood as $\FA\left(\widetilde f\right)$ for any lift $\widetilde f$ of $f$ to $\MapA( \Sigma) $, but this is omitted from the notation because the right hand side does not depend on this lift.
	\end{corollary}

\begin{proof}
This follows from a calculation very similar to one from the
proof of Proposition~\ref{propenvfh2x}
	\end{proof}

In Definition~\ref{deflambdaequiv} we single out the admissible extensions of $\Surf$ that we will be interested in.
This leads to the obvious question whether the extension $\SurfA$ from Theorem~\ref{thmdefsurfa} fits into this framework. Let us first verify the insertion of vacua property:

	\begin{proposition}[Insertion of vacua]\label{propinsertionofvacualemma}
	Let $\cat{A}$ be a cyclic framed $E_2$-algebra in $\smc$, $\Sigma\in\Surf(T)$ for  $T\in \Graphs$ and
	$\varphi : \sqcup_J \mathbb{D}^2 \to \Sigma$
	an oriented embedding.
	Then
	\begin{align}
		\Omega_\cat{A} (  \Sigma \setminus \mathring{\operatorname{im}}\, \varphi )        \ra{ \id    \times \PhiA(\mathbb{B}^3_1)		  } \Omega_\cat{A} (  \Sigma \setminus \mathring{\operatorname{im}}\, \varphi )   \times \Omega_\cat{A}(\mathbb{D}^2)^{\times J} \ra{\text{gluing}}\Omega_\cat{A}(\Sigma)     \label{eqnfunctordisks}
	\end{align}
	is an equivalence. 
\end{proposition}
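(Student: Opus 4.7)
The plan is to verify that the composition in~\eqref{eqnfunctordisks} is both essentially surjective and fully faithful on the $\Omega_\cat{A}$-subgroupoids.

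For essential surjectivity, given a handlebody $H$ with $\partial H = \Sigma$, I would decompose $H$ as an operadic composition in $\Hbdy$ of the form $H_0 \cup (\mathbb{B}^3_1)^{\sqcup J}$ with $\partial H_0 = \Sigma \setminus \mathring{\operatorname{im}}\,\varphi$. To construct $H_0$, pick for each $j \in J$ a small collar $C_j \cong \mathbb{D}^2 \times [0,\epsilon_j]$ of $\varphi_j(\mathbb{D}^2)$ in the interior of $H$ and set $H_0 := H \setminus \bigsqcup_{j \in J} \mathring{C}_j$. The new top disks $\mathbb{D}^2 \times \{\epsilon_j\}$ serve as the additional parametrized disks of $\partial H_0 \cong \Sigma \setminus \mathring{\operatorname{im}}\,\varphi$, and each removed collar is a copy of $\mathbb{B}^3_1$ in $\Hbdy$. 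By step~\ref{proofstepphiHglue} in the proof of Theorem~\ref{thmdefsurfa}, the operadic composition in $\SurfA$ sends $(\PhiA(H_0), (\PhiA(\mathbb{B}^3_1))^{\times J})$ to an object canonically isomorphic to $\PhiA(H)$, so $\PhiA(H)$ lies in the essential image of~\eqref{eqnfunctordisks}.

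For fully faithfulness, the key observation is that $\PhiA(\mathbb{B}^3_1) : \int_{\mathbb{D}^2} \cat{A} \to \cat{A}$ coincides, under the canonical equivalence $\int_{\mathbb{D}^2} \cat{A} \simeq \cat{A}$, with the identity functor; this follows from Proposition~\ref{propenvfh} applied to the identity embedding $\mathbb{D}^2 \to \mathbb{D}^2$ together with the compatibility of the modular extension $\widehat{\cat{A}}$ with operadic identities. Tracing through the construction of the operadic composition in $\cat{G}_\cat{A}^\Omega$ from step~\ref{proofstep1} in the proof of Theorem~\ref{thmdefsurfa}, the gluing functor induced on module maps factors as (i) tensoring with $\id_\cat{A}$ on the relevant factors, (ii) applying the contraction~\eqref{eqncontractmap} via the non-degenerate pairing $\kappa$, and (iii) descending to $\int_\Sigma \cat{A}$ via excision. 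Each of these steps is an equivalence on the relevant module-map hom categories: (ii) by the self-duality of $\cat{A}$ encoded in $\kappa$, and (iii) by the excision property of factorization homology recalled in Section~\ref{secprefh}. Restricting to the $\Omega_\cat{A}$-subgroupoids therefore yields a bijection on isomorphism sets between any fixed pair of $\PhiA(H_i)$-objects.

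The principal technical obstacle will be step~(iii) in the fully faithful argument, namely carefully matching the operadic descent used to construct $\cat{G}_\cat{A}^\Omega$ with the excision equivalence of factorization homology, and verifying compatibility with the $\int_{\partial \Sigma \times [0,1]} \cat{A}$-module structures on both sides. Once this compatibility is in place, the remaining verifications are formal consequences of the universal property of factorization homology and the non-degeneracy of $\kappa$.
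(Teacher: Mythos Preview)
Your proposal is correct and follows essentially the same route as the paper's proof: essential surjectivity via the observation that the composite sends $\PhiA(H^\varphi)$ (your $H_0$) to $\PhiA(H)$, and full faithfulness by recognizing the gluing map as the restriction in domain and range of an equivalence built from the non-degenerate pairing $\kappa$ and excision for factorization homology. The paper condenses your steps (i)--(iii) into a single displayed chain of equivalences
\[
\smc_\partial\!\left(\textstyle\int_{\Sigma\setminus\mathring{\operatorname{im}}\,\varphi}\cat{A},\,\cat{A}^{\otimes\Legs(T)}\otimes\cat{A}^{\otimes J}\right)\simeq\smc_\partial\!\left(\textstyle\int_{\Sigma\setminus\mathring{\operatorname{im}}\,\varphi}\cat{A}\otimes_{(\int_{\mathbb{S}^1\times[0,1]}\cat{A})^{\otimes J}}\cat{A}^{\otimes J},\,\cat{A}^{\otimes\Legs(T)}\right)\simeq\smc_\partial\!\left(\textstyle\int_\Sigma\cat{A},\,\cat{A}^{\otimes\Legs(T)}\right),
\]
so the technical obstacle you flag in (iii) is indeed the substantive point, and the paper dispatches it by invoking excision directly.
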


\begin{proof}
	Without loss of generality, we can assume that $T$ is a corolla. For any handlebody $H$ with boundary $\Sigma$, we can form the handlebody $H^\varphi$ which has additional embedded disks in its boundary coming from $\varphi$; we have used this notation on page~\pageref{Hvarphipage} already. 
	On the object level, \eqref{eqnfunctordisks} sends $\PhiA(H^\varphi) \mapsto \PhiA(H)$, thereby making the functor essentially surjective.
	In order to see that~\eqref{eqnfunctordisks} is also fully faithful, we observe that it is given by the restriction in domain and range of the equivalence
	\begin{align}
		\smc_\partial\left(   	\int_{\Sigma \setminus \mathring{\operatorname{im}}\,\varphi} \cat{A}, \cat{A}^{\otimes\Legs(T)}\otimes \cat{A}^{\otimes J} \right) &\simeq \smc_\partial \left(   	\int_{\Sigma \setminus \mathring{\operatorname{im}}\,\varphi} \cat{A} 
		\otimes_{ \left(  \int_{\mathbb{S}^1\times \opint } \cat{A} \right)^{\otimes J} }    \cat{A}^{\otimes J} , \cat{A}^{\otimes \Legs(T)}  \right) \\&\simeq \smc_\partial\left( \int_\Sigma \cat{A},\cat{A}^{\otimes \Legs(T)}\right)
	\end{align}
	which uses the pairing and excision.
\end{proof}

In combination with Theorem~\ref{thmdefsurfa},
 Proposition~\ref{propinsertionofvacualemma} gives us:

\begin{theorem}\label{thmtrivhbdy}
	Let $\cat{A}$ be a connected cyclic framed $E_2$-algebra.
	Then	$p_\cat{A}: \SurfA \to \Surf$ is 
	an extension of $\Surf$ relative to genus zero
	together with the needed section being the map $s_\cat{A}:\Hbdy\to\SurfA$ from Theorem~\ref{thmdefsurfa}. It additionally admits insertions of vacua
	in the sense of Definition~\ref{deflambdaequiv}, i.e.\
	we have $\SurfA \in \ExtSurf$.
\end{theorem}

	\section{The construction and classification of modular functors\label{secclassmf}}

The goal of this section is to give a general construction procedure for modular functors and to ultimately use this construction to classify modular functors. 

\subsection{The construction of modular functors\label{secmodularfunctorsconstruction}}
As a first result, we will construct,
for any cyclic framed $E_2$-algebra $\cat{A}$, a modular $\SurfA$-algebra structure on $\cat{A}$.
This modular $\SurfA$-algebra will be a modular functor if and only if $\cat{A}$ is connected.

\begin{definition}\label{defthefunctorsF}
For any cyclic framed $E_2$-algebra $\cat{A}$, a corolla $T\in\Graphs$ and $\Sigma \in \Surf(T)$, the distinguished object $\cat{O}_\Sigma^\cat{A}:I\to \int_\Sigma \cat{A}$ gives us by precomposition a functor
\begin{align}
	\mathfrak{F}_\cat{A}^{T,\Sigma} : \Omega_\cat{A}(\Sigma)     \ra{\text{inclusion}}    \smc_\partial\left(\int_\Sigma\cat{A},\cat{A}^{\otimes\Legs(T)}\right) \ra{-\circ \cat{O}_\Sigma^\cat{A}}  
	\smc\left(I,\cat{A}^{\otimes\Legs(T)}\right)\ . \label{eqnthefunctorsF}
	\end{align}
This is extended to the case of non-connected $T$.
\end{definition}

\begin{theorem}[Construction of modular functors]\label{thmfa2}
	For any cyclic framed $E_2$-algebra $\cat{A}$, 
	the functors from Definition~\ref{defthefunctorsF} yield the structure of a modular $\SurfA$-algebra $\FA$ whose restriction to $\Hbdy$ is canonically identified with the ansular functor $\widehat{\cat{A}}$ of $\cat{A}$.
	This modular $\SurfA$-algebra $\cat{A}$ is a modular functor
	if and only if
	$\cat{A}$ is connected.
\end{theorem}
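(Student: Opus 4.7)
The plan is to exploit the Grothendieck construction description~\eqref{eqnsurfA} of $\SurfA$. A modular $\SurfA$-algebra valued in $\smc$ amounts, in the sense of~\cite{cyclic}, to a symmetric monoidal functor $\SurfA \to \smc$ (up to coherent isomorphism). By the universal property of the Grothendieck construction, specifying such a functor on $\SurfA(T)$ is equivalent to giving, for each $\Sigma \in \Surf(T)$, a functor from the fiber $\Omega_\cat{A}(\Sigma)$ to a fixed target groupoid, together with coherence data expressing naturality with respect to the $\Surf^\op(T)$-action (equivalently, the mapping classes of $\Sigma$). I will take as target the hom-groupoid $\smc(I,\cat{A}^{\otimes \Legs(T)})$ and use the functor $\mathfrak{F}_\cat{A}^{T,\Sigma}$ of Definition~\ref{defthefunctorsF}. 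The naturality under mapping classes is obtained by combining the canonical isomorphism $\PhiA(f.H)\circ f_* \cong \PhiA(H)$ from Proposition~\ref{propenvfh2x} with the homotopy $\Diff(\Sigma)$-fixed point structure of the distinguished object $\cat{O}_\Sigma$ recalled in Section~\ref{secprefh}. The compatibility with operadic composition in $\Graphs$ is provided by step~\ref{proofstepphiHglue} in the proof of Theorem~\ref{thmdefsurfa}, which asserts that, under the functor $(\cat{G}_\cat{A})(\Gamma)$, a module map of the form $\PhiA(H)$ is sent to $\PhiA(\Gamma_* H)$; excision for factorization homology then translates this into the required compatibility between the $\mathfrak{F}_\cat{A}^{T,\Sigma}$ and gluing. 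The symmetric monoidality is automatic from the product description $\Omega_\cat{A}(\Sigma \sqcup \Sigma') = \Omega_\cat{A}(\Sigma)\times\Omega_\cat{A}(\Sigma')$.

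\textbf{Identification of the restriction to $\Hbdy$.} The canonical section $s_\cat{A}: \Hbdy \to \SurfA$ from Theorem~\ref{thmdefsurfa} sends a handlebody $H$ with $\partial H = \Sigma$ to the object $\PhiA(H) \in \Omega_\cat{A}(\Sigma) \subset \SurfA(T)$. Therefore $(\FA\circ s_\cat{A})(H) = \PhiA(H) \circ \cat{O}_\Sigma$, which by Theorem~\ref{thmfhenv} is canonically isomorphic to $\widehat{\cat{A}}(H)$. For a mapping class $g:H\to H'$ of handlebodies, I invoke Corollary~\ref{corxi}: the isomorphism $\widehat{\cat{A}}(g)$ is there explicitly described by pasting the comparison triangle~\eqref{propenvfh2exX} together with the homotopy fixed point data of the distinguished objects, which is exactly the structure 2-cell that $\FA$ assigns to the morphism $s_\cat{A}(g)$. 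Hence the natural isomorphism $\widehat{\cat{A}}\simeq \FA\circ s_\cat{A}$ is canonical and compatible with operadic composition (again via step~\ref{proofstepphiHglue}).

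\textbf{The modular functor criterion.} The equivalence is almost a tautology once we unravel the definitions. By Definition~\ref{defmodularfunctor}, the pair $(\SurfA, \FA)$ is a modular functor precisely when $\SurfA \in \ExtSurf$; unfolding this via Definition~\ref{deflambdaequiv} requires (i) that $p_\cat{A}$ is an extension of modular operads, (ii) that it is relative $\Hbdy$ (i.e., comes with a compatible trivialization over $\Hbdy$), and (iii) that it admits insertions of vacua. Theorem~\ref{thmdefsurfa2} identifies condition~(i) with the connectedness of $\cat{A}$, and Theorem~\ref{thmtrivhbdy} shows that conditions~(ii) and~(iii) are then automatic. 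Conversely, if $(\SurfA,\FA)$ is a modular functor then $p_\cat{A}$ is in particular an extension, and a second application of Theorem~\ref{thmdefsurfa2} forces $\cat{A}$ to be connected. The only non-routine input is the first paragraph, where all the work sits in propagating the coherence from the operadic construction of $\SurfA$ in Theorem~\ref{thmdefsurfa} to the induced algebra structure; however, since the $\SurfA$-structure was built precisely so that the $\PhiA$-maps compose correctly, this reduces to a bookkeeping exercise.
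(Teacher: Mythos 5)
Your proposal is correct and follows essentially the same route as the paper: descend the functors $\mathfrak{F}_\cat{A}^{T,\Sigma}$ to $\SurfA$ using the homotopy fixed point structure of $\cat{O}_\Sigma$ (and the coherence for gluing already built into the construction of $\SurfA$, i.e.\ $\PhiA(H)\mapsto\PhiA(\Gamma_*H)$), identify the restriction along $s_\cat{A}:\Hbdy\to\SurfA$ with $\widehat{\cat{A}}$ via Theorem~\ref{thmfhenv} and Corollary~\ref{corxi}, and reduce the ``modular functor iff connected'' statement to Theorems~\ref{thmdefsurfa2} and~\ref{thmtrivhbdy}. The only differences are cosmetic (e.g.\ your extra appeal to Proposition~\ref{propenvfh2x} for mapping-class naturality, where the fixed point structure of the distinguished object already suffices).
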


\begin{proof}
	Let $T \in \Graphs$ be a corolla. Then for any diffeomorphism $f:\Sigma \to \Sigma'$,
	 the triangle 
	\begin{equation}\label{gdconeqn}
		\begin{tikzcd}
			\Omega_\cat{A}(\Sigma') \ar[rrd,"\text{$\mathfrak{F}_\cat{A}^{T,\Sigma'}$}"] \ar[dd,swap,"- \circ f_*"] \\ 
			&& \smc(I,\cat{A}^{\otimes \Legs(T)}) \\
			\Omega_\cat{A}(\Sigma) \ar[rru,swap,"\text{$\mathfrak{F}_\cat{A}^{T,\Sigma}$}"]
		\end{tikzcd}
	\end{equation} commutes by a canonical natural isomorphism coming from the homotopy fixed point structure $f_* \cat{O}_\Sigma^\cat{A} \cong \cat{O}_{\Sigma'}^\cat{A}$
 of the distinguished object $\cat{O}_\Sigma^\cat{A}$ under diffeomorphisms
 (this homotopy fixed point structure was recalled in Section~\ref{secprefh}).
	This entails that the functors $\mathfrak{F}_\cat{A}^{T,\Sigma}$ descend to the Grothendieck construction $\cat{G}_\cat{A}^\Omega(T) = \Gr \left(   \SURF(T)^\op \ra{\Omega_\cat{A}} \Grpd  \right)$ and yield functors $\mathfrak{F}_\cat{A} : \SurfA(T)\to  \smc(I,\cat{A}^{\otimes \Legs(T)})$
	because $\SurfA(T)$ is the fundamental groupoid of $\cat{G}_\cat{A}^\Omega(T)$, see \eqref{defsurfa}, and because $\smc(I,\cat{A}^{\otimes \Legs(T)})$ is just a 1-category.
 This extends in a straightforward way to the case of non-connected $T$. Now the functors $\mathfrak{F}_\cat{A}$ endow $\cat{A}$ with the structure of a modular $\SurfA$-algebra. 
	The restriction of $\mathfrak{F}_\cat{A}$ 
	along the map $\Hbdy \to \SurfA$ from Theorem~\ref{thmdefsurfa}
	sends a handlebody $H$ to $\PhiA(H)\cat{O}_{\partial H}^\cat{A}$, which can be canonically identified with  $\widehat{\cat{A}}(H)$ by Theorem~\ref{thmfhenv}. In other words, it agrees with the ansular functor associated to $\cat{A}$.
	By Theorem~\ref{thmdefsurfa} and~\ref{thmtrivhbdy} $\SurfA$ is an extension if and only if $\cat{A}$ is connected. Therefore, $(\SurfA,\mathfrak{F}_\cat{A})$ is 
	 a modular functor if and only if $\cat{A}$ is connected.
	\end{proof}

\begin{remark}
	The construction of Theorem~\ref{thmfa2} is suitably functorial in maps of cyclic framed $E_2$-algebras which by \cite[Proposition~2.18]{cyclic} are always invertible. More precisely, it is straightforward to verify the following:
	Let $\varphi : \cat{A}\ra{\simeq}\cat{B}$ be an equivalence of cyclic framed $E_2$-algebras.
	Then $\varphi$ gives rise to a modular operad $\SurfAB$ that comes with equivalences
	$\varepsilon_\cat{A}:\SurfAB\ra{\simeq}\SurfA$ and $\varepsilon_\cat{A}:\SurfAB \ra{\simeq}\SurfB$
	such that the map $\varphi$ induces an equivalence $\varepsilon_\cat{A}^*\mathfrak{F}_\cat{A}\ra{\simeq}\varepsilon_\cat{B}^* \mathfrak{F}_\cat{B}$
	of modular $\SurfAB$-algebras.
\end{remark}

\subsection{Universality of $\SurfA$ and $\mathfrak{F}_\cat{A}$}
We will ultimately use the construction $\cat{A}\mapsto \mathfrak{F}_\cat{A}$ from the previous subsection to classify modular functors.
To this end, the next result will be crucial: We will prove that, if $\cat{A}$ is connected, then an extension of $\cat{A}$ to a modular functor living over an extension $\cat{Q}$ of $\Surf$ amounts exactly to a map $\cat{Q}\to\SurfA$ of extensions.
In other words, $\SurfA$ is a `classifying space' for modular functors extending $\cat{A}$.

\begin{theorem}[Universality of $\SurfA$ and $\mathfrak{F}_\cat{A}$]\label{thmuniversal}
	For any connected cyclic framed $E_2$-algebra $\cat{A}$,
the $\SurfA$-algebra $\mathfrak{F}_\cat{A}$ from Theorem~\ref{thmfa2} is universal in the sense that, for $\cat{Q} \in \ExtSurf$, the map
	\begin{align}
		\Map_{\ExtSurf}(\cat{Q},\SurfA) \ra{\simeq} \Ext(\cat{A};\cat{Q}) \ , \quad \psi \mapsto \psi^* \mathfrak{F}_\cat{A} \ \label{eqnuniversal}
		\end{align}
	is an equivalence of groupoids.
	\end{theorem}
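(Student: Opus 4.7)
The plan is to construct an explicit inverse functor $\Psi : \Ext(\cat{A};\cat{Q}) \to \Map_{\ExtSurf}(\cat{Q},\SurfA)$. Given $\cat{B} \in \Ext(\cat{A};\cat{Q})$, I must produce a map $\psi_\cat{B} : \cat{Q} \to \SurfA$ of extensions of $\Surf$ trivialized over $\Hbdy$. By the Grothendieck construction description $\SurfA(T) = \Gr(\Sigma \mapsto \Omega_\cat{A}(\Sigma))$ from Theorem~\ref{thmdefsurfa}, specifying $\psi_\cat{B}$ over $\Surf$ reduces to associating, functorially in $q$ and compatibly with the modular operad structure, to each $q \in \cat{Q}(T)$ over $\Sigma \in \Surf(T)$ an object $\psi_\cat{B}(q) \in \Omega_\cat{A}(\Sigma)$, i.e.\ a $\int_{\partial \Sigma \times \opint}\cat{A}$-module map $\int_\Sigma \cat{A} \to \cat{A}^{\otimes \Legs(T)}$ isomorphic to some handlebody skein module $\PhiA(H)$.

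To build $\psi_\cat{B}(q)$, I exploit the insertion-of-vacua property of $\cat{Q}$ from Definition~\ref{deflambdaequiv}: for any oriented embedding $\varphi : (\mathbb{D}^2)^{\sqcup J} \to \Sigma$, the functor $\lambda_\varphi$ of~\eqref{eqnlambdafunctor} is an equivalence, so $q$ admits an essentially unique preimage $q_\varphi \in \cat{Q}_{\Sigma \setminus \mathring{\operatorname{im}}\,\varphi}$. Evaluating $\cat{B}$ on $q_\varphi$ yields a morphism $I \to \cat{A}^{\otimes \Legs(T)} \otimes \cat{A}^{\otimes J}$ which, via the self-duality pairing $\kappa$, becomes a morphism $\cat{A}^{\otimes J} \to \cat{A}^{\otimes \Legs(T)}$. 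The modular operad axioms for $\cat{B}$, together with the fact that genus zero operations of $\cat{Q}$ realize $\framed$, imply coherence of this family under composition of embeddings, so that by the colimit description~\eqref{fheqncolim} of factorization homology it assembles into a map $\psi_\cat{B}(q) : \int_\Sigma \cat{A} \to \cat{A}^{\otimes \Legs(T)}$; the $\int_{\partial \Sigma \times \opint}\cat{A}$-module structure arises exactly as in Proposition~\ref{propmodulemap}, from the behavior under boundary collar embeddings.

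To verify $\psi_\cat{B}(q) \in \Omega_\cat{A}(\Sigma)$, I use the canonical section $s : \Hbdy \to \cat{Q}$ entering the definition of $\ExtSurf$: the restriction $s^*\cat{B}$ is a modular $\Hbdy$-algebra with genus zero part $\cat{A}$, hence by the uniqueness of the modular extension (Theorem~\ref{thmmodext}) canonically equivalent to $\widehat{\cat{A}}$. Combined with Theorem~\ref{thmfhenv} this yields $\psi_\cat{B}(s(H)) \cong \PhiA(H)$ for every handlebody $H$ with $\partial H = \Sigma$. Since $\cat{Q}_\Sigma$ is connected and contains every $s(H)$, any $q$ is isomorphic to some $s(H)$ in $\cat{Q}_\Sigma$, whence $\psi_\cat{B}(q) \cong \PhiA(H)$ as $\int_{\partial\Sigma \times \opint}\cat{A}$-module maps. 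Functoriality of $\psi_\cat{B}$, compatibility with operadic composition, cyclic structure, projection to $\Surf$, and with trivializations over $\Hbdy$ are then immediate from the corresponding properties of $\cat{B}$ and from the construction of $\SurfA$ via the Grothendieck construction.

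Finally, I verify that $\Psi$ is inverse to $\psi \mapsto \psi^*\mathfrak{F}_\cat{A}$. For $\cat{B} \in \Ext(\cat{A};\cat{Q})$, the definition~\eqref{eqnthefunctorsF} gives $\mathfrak{F}_\cat{A}(\psi_\cat{B}(q)) = \psi_\cat{B}(q) \circ \cat{O}_\Sigma$, which equals $\cat{B}(q)$ since the $\varphi = \emptyset$ factor of the construction of $\psi_\cat{B}(q)$ is tautologically $\cat{B}(q)$. Conversely, for $\psi \in \Map_{\ExtSurf}(\cat{Q},\SurfA)$ and $\cat{B} = \psi^*\mathfrak{F}_\cat{A}$, the reconstruction $\psi_\cat{B}(q)$ recovers $\psi(q)$ because $\psi$ is a map of extensions and hence, by Proposition~\ref{propinsertionofvacualemma}, intertwines insertions of vacua in $\cat{Q}$ with those in $\SurfA$, so that the factorization-homology colimit~\eqref{fheqncolim} assembles the pieces back into $\psi(q)$ itself. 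The main obstacle throughout will be keeping track of all coherences: verifying that the family of morphisms $\cat{A}^{\otimes J} \to \cat{A}^{\otimes \Legs(T)}$ coming from the insertions of vacua descends to a coherent module map out of $\int_\Sigma \cat{A}$ requires careful bookkeeping of the modular operad axioms for $\cat{B}$ combined with the excision property of factorization homology, and similarly for establishing the $\int_{\partial\Sigma \times \opint}\cat{A}$-module structure.
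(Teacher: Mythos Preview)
Your proposal is correct and follows essentially the same strategy as the paper's proof: both construct the inverse by using the insertion-of-vacua equivalences $\lambda_\varphi$ to lift $q$ to the complement of embedded disks, evaluate $\cat{B}$ there, dualize via $\kappa$, assemble the resulting family into a module map out of $\int_\Sigma \cat{A}$ via the colimit description of factorization homology, and then use the section $s$ and connectedness of $\cat{Q}_\Sigma$ to see that this map lands in $\Omega_\cat{A}(\Sigma)$. The paper phrases the lifting step as a homotopy colimit over the contractible slice $\lambda_\varphi/q$ rather than choosing an ``essentially unique preimage $q_\varphi$'', but this is only a cosmetic difference.
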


The fact that the morphism category in $\ExtSurf$
is indeed a groupoid is a consequence of Lemma~\ref{lemma2morphinv}.

\begin{proof} 
	The main part of the proof will construct
	for each $\cat{B}\in \Ext(\cat{A};\cat{Q})$ a map
	$\psi^\cat{B}:\cat{Q}\to\SurfA$ of extensions over $\Surf$. This will be accomplished in the  steps~\ref{proofpsistep1}-\ref{steppsiconstruct}.
By sending $\cat{B}\in \Ext(\cat{A};\cat{Q})$ to
$\psi^\cat{B}:\cat{Q}\to\SurfA$  we will be able to describe  a weak inverse to~\eqref{eqnuniversal} directly in step~\ref{stepdescribeinverse}. 
	
	\begin{pnum}
		\item\label{proofpsistep1}
	Let $\cat{B}\in \Ext(\cat{A};\cat{Q})$, moreover $T\in\Graphs$ a corolla and $\Sigma \in \Surf(T)$.
	In this first step, it is our goal to construct natural maps
	\begin{align}
		\Lambda_\cat{B}^-  :          \cat{Q}_\Sigma \to \cat{S}_\partial \left(   \int_\Sigma \cat{A},\cat{A}^{\otimes \Legs(T)}\right) \ , \quad q \mapsto \Lambda_\cat{B}^q \ . 
		\end{align}
	As usual $\cat{Q}_\Sigma$ is the homotopy fiber of the extension $\cat{Q}\to\Surf$ over $\Sigma$.
	
For any oriented embedding $\varphi : (\mathbb{D}^2)^{\sqcup J} \to \Sigma$, we have the equivalence
\begin{align}
		\lambda_\varphi : 	\cat{Q}_{\Sigma \setminus \mathring{\operatorname{im}}\, \varphi} \ra{\simeq} \cat{Q}_  \Sigma    \label{eqnlambdaequiv}
	\end{align}
from Definition~\ref{deflambdaequiv} since $\cat{Q}$ admits insertions of vacua by assumption. 
Now for $q \in \cat{Q}_  \Sigma$ and $r=(o,\alpha) \in \lambda_\varphi/q$, i.e.\ an object
$o\in \cat{Q}_{\Sigma\setminus \mathring{\operatorname{im}}\, \varphi}$ and a morphism
$\lambda_\varphi o \ra{\alpha} q$, the algebra $\cat{B}$ gives us a map
\begin{align}
 \cat{B}^{\otimes J} \ra{\cat{B}_{o}} \cat{B}^{\otimes \Legs(T)} \ . 
	\end{align}
These maps combine into a map
\begin{align}
\cat{B}_q^\varphi :	\hocolimsub{q' \in \lambda_\varphi / q } \cat{B}^{\otimes J} 
\to
  \cat{B}^{\otimes \Legs(T)}  \ . 
\end{align}
We will now show that
$\cat{B}_q^\varphi$ 
 is natural in $\varphi$ and hence gives us a map
\begin{align}
\label{eqnPhibq}	\Lambda_\cat{B}^q : \hocolimsub{\varphi} \hocolimsub{r \in \lambda_\varphi / q } \cat{B}^{\otimes J}\to \cat{B}^{\otimes \Legs(T)}  \ . 
	\end{align}
	To see this, we write
	 $J=\{1,\dots,n\}$
	and $J'=\{1,\dots,m\}$ and pick in addition to $\varphi$ a framed $E_2$-operation $\varphi' : \sqcup_{J'} \mathbb{D}^2 = (\mathbb{D}^2)^{\sqcup m} \to \mathbb{D}^2$. We will see $\varphi'$ not only as a $\framed$-operation, but also as genus zero surface.
	The functor
	\begin{align}
	c_i(\varphi,\varphi') : \cat{Q}_{\Sigma \setminus \mathring{\operatorname{im}}\,\varphi} \ra{\text{pointing of $\cat{Q}_{\varphi'}$}} \cat{Q}_{\Sigma \setminus \mathring{\operatorname{im}}\, \varphi}  \times \cat{Q}_{\varphi'} \ra{\text{gluing}} \cat{Q}_{\Sigma \setminus \mathring{\operatorname{im}}\,(\varphi\circ_i \varphi')}
	\end{align}
	makes the triangle
		\begin{equation}\label{propenvfh2eXY}
	\begin{tikzcd}
	& \cat{Q}_{\Sigma \setminus \mathring{\operatorname{im}}\,\varphi}	 \ar[rrrd,"\lambda_\varphi"]  \ar[dd,swap,"\text{$c_i(\varphi,\varphi')$}"] \\ &&&& \cat{Q}_\Sigma \ ,   \\ 
	&\cat{Q}_{\Sigma \setminus \mathring{\operatorname{im}}\,(\varphi\circ_i \varphi')} \ar[rrru,"\lambda_{\varphi \circ_i \varphi'}", swap] 
	\end{tikzcd}
	\end{equation}
	commute up to a canonical natural isomorphism. Since $\lambda_\varphi$ and $\lambda_{\varphi \circ_i \varphi'}$ are equivalences by assumption, so is 
	$c_i(\varphi,\varphi')$. As a result, we may identify \begin{align} \lambda_{\varphi\circ_i \varphi'} / q\simeq \lambda_\varphi / q \ . \label{eqnequivslices} \end{align}
	Consider now the following triangle:
	\begin{equation}\label{propenvfh2eX}
	\begin{tikzcd}
	&  \hocolimsub{r' \in \lambda_{\varphi\circ_i \varphi'} / q }	\cat{B}^{\otimes(j-1)} \otimes \cat{B}^{\otimes m} \otimes \cat{B}^{\otimes (n-j)}	 \ar[rrrd,"\cat{B}_q^{\varphi \circ_i \varphi'}"] \ar[dd,"(*)  ",swap]  \\ &&&& \cat{B}^{\otimes \Legs(T )} \ ,   \\ 
	&	\hocolimsub{r \in \lambda_\varphi / q }	\cat{B} 	^{\otimes n} \ar[rrru,"\cat{B}_q^\varphi", swap] 
	\end{tikzcd}
	\end{equation}
	The map $(*)$ is induced by $\cat{B}_{ s(\varphi')}:\cat{B}^{\otimes m }\to \cat{B}$
	(we are using here the section $s: \Hbdy \to \cat{Q}$ associated to $\cat{Q}$)
	 and~\eqref{eqnequivslices}.
	Since $\cat{B}$ is a modular algebra, this triangle commutes up to a canonical isomorphism.
	This tells us that we indeed obtain the map~\eqref{eqnPhibq}. 

The key point is that the slices $\lambda_\varphi/ -$ are contractible (see~\eqref{eqnlambdaequiv}). For this reason,
the double homotopy colimit on the left hand side of~\eqref{eqnPhibq}
 models the factorization homology of the restriction of $\cat{B}$ to
$\framed$. 
But as part the data, we have an equivalence
of cyclic framed $E_2$-algebras 
between this restriction of $\cat{B}$ and  $\cat{A}$. 
Therefore, we may identify the left hand side of~\eqref{eqnPhibq} canonically with $\int_\Sigma \cat{A}$.
In other words, we may see $\Lambda_\cat{B}^q$ as a map 
\begin{align}
\Lambda_\cat{B}^q : \int_\Sigma \cat{A} \to \cat{A}^{\otimes \Legs(T)}  \ . 
	\end{align}
By construction this comes with the structure of an $\int_{\partial \Sigma \times \opint } \cat{A}$-module map
(this is essentially the same argument as in the proof of Proposition~\ref{propmodulemap}). 
Moreover, the assignment $q \mapsto \Lambda_\cat{B}^q$ naturally extends to a functor out of $	\cat{Q}_\Sigma$, and it sends the morphisms in $\cat{Q}_\Sigma$ to morphisms of $\int_{\partial \Sigma \times \opint } \cat{A}$-module maps. Therefore, we obtain natural functors
\begin{align}
		\Lambda_\cat{B}^-  :          \cat{Q}_\Sigma \to \cat{S}_\partial \left(   \int_\Sigma \cat{A},\cat{A}^{\otimes \Legs(T)}\right) \ , 
		\end{align}
where $\smc_\partial \left( \int_\Sigma \cat{A}, \cat{A}^{\otimes \Legs(T)}\right)$ is the groupoid of $\int_{\partial \Sigma \times \opint } \cat{A}$-module maps $\int_\Sigma \cat{A}\to \cat{A}^{\otimes \Legs(T)}$ with module isomorphisms as morphisms.
Then
	\begin{align}
		\cat{Q}_\Sigma \ra{ \Lambda_\cat{B}^-  }    \smc_\partial\left( \int_\Sigma \cat{A}, \cat{A}^{\otimes \Legs(T)}\right) \ra{-\circ \cat{O}_\Sigma^\cat{A}} \smc(I,\cat{A}^{\otimes \Legs(T)})\label{eqnidentifyB}
		\end{align}
	can be canonically identified with $	\cat{Q}_\Sigma\ra{\cat{B}} \smc(I,\cat{A}^{\otimes \Legs(T)})$ under the identification of $\cat{A}$ and $\cat{B}$ as cyclic algebras (this is the same argument as in the proof of Theorem~\ref{thmfhenv}).
	
	\item 
	Next consider the following
	diagram (the dashed arrow is to be ignored for the moment):
		\begin{equation}\label{eqndiagramuniversal}
		\begin{tikzcd}
		\partial^{-1}(\Sigma) \ar[dd,swap,"s"] \ar[rrr,"s_\cat{A}"]	&&& \Omega_\cat{A}(\Sigma) \ar[rrrdd," \mathfrak{F}_\cat{A}^{T,\Sigma}  "] \ar[dd," \iota  "]  \\ \\ 
			\cat{Q}_\Sigma \ar[rrr,"    \Lambda_\cat{B}^-      "]  \ar[rrruu, dashed,"   \psi_\Sigma^\cat{B}     "] \ar[rrrrrrdd, swap,"   \cat{B}      "]  && &     \smc_\partial\left( \int_\Sigma \cat{A}, \cat{A}^{\otimes \Legs(T)}\right) \ar[rrr, swap,"  -\circ \cat{O}_\Sigma^\cat{A}    "] &&&   \smc(I,\cat{A}^{\otimes \Legs(T)}) \ar[dd,"\simeq"] \\ \\  &&& &&&\smc(I,\cat{B}^{\otimes \Legs(T)}) 
		\end{tikzcd}
	\end{equation}
Here $\iota$ is the full subgroupoid inclusion.
The lower surface commutes by a canonical isomorphism  because the composition~\eqref{eqnidentifyB}, as just discussed, can be identified with $\cat{B}$.
The right triangle commutes strictly by Definition~\ref{defthefunctorsF}.
By construction the square on the left commutes up to a canonical isomorphism (this is the identification of $\cat{B}$ with $\widehat{\cat{A}}$ over $\Hbdy$ that is part of data). 
The fact that by connectedness of $\cat{Q}_\Sigma$ the functor $s: \partial^{-1}(\Sigma)\to\cat{Q}_\Sigma$ is essentially surjective implies that $\Lambda_\cat{B}^-$ must factor through $\Omega_\cat{A}(\Sigma)$. In other words, there is a unique functor $\psi_\Sigma^\cat{B} : \cat{Q}_\Sigma\to \Omega_\cat{A}(\Sigma)$ with $\iota \psi_\Sigma^\cat{B} = \Lambda_\cat{B}^-$. 
Now $\iota \psi_\Sigma^\cat{B} s = \Lambda_\cat{B}^- s \cong \iota s_\cat{A}$ by a canonical isomorphism. 
Since $\iota$ is a full subgroupoid inclusion, 
this gives us a canonical isomorphism $\psi_\Sigma^\cat{B} s\cong s_\cat{A}$. 
We have now established that the entire diagram~\eqref{eqndiagramuniversal} commutes up to a canonical isomorphism.

\item
The map $\psi_\Sigma^\cat{B}$, being natural in $\Sigma$, gives us a map $\psi^\cat{B}:\cat{Q}\to\SurfA$ of modular operads, in fact, of modular operads over $\Surf$ (because it is constructed fiberwise anyway). Over $\Hbdy$, the map comes with a section thanks to $\psi^\cat{B} s\cong s_\cat{A}$. \label{steppsiconstruct}
\item With all these preparations, we can describe the desired inverse
$\Ext(\cat{A};\cat{Q}) \to \Map_{\ExtSurf}(\cat{Q},\SurfA)$ to~\eqref{eqnuniversal}.
It sends $\cat{B} \in  \Ext(\cat{A};\cat{Q})$ to the map $\psi^\cat{B} : \cat{Q} \to \SurfA$
from step~\ref{steppsiconstruct}.
This gives us indeed an inverse: For any map $\theta :  \cat{Q} \to \SurfA$ of extensions,
we find $\Lambda_{\theta^* \mathfrak{F}_\cat{A}}^- \simeq \Lambda_{\mathfrak{F}_\cat{A}}^{\theta(-)}$, and hence $\psi^{\theta^* \mathfrak{F}_\cat{A}}\simeq\psi^{\mathfrak{F}_\cat{A}} \circ \theta=\theta$ (this uses that $\psi^{\mathfrak{F}_\cat{A}}$ is the identity).
Similarly, for $\cat{B}\in  \Ext(\cat{A};\cat{Q})$, we find $(\psi^\cat{B})^* \mathfrak{F}_\cat{A}\simeq \cat{B}$ by the commutativity of the lower surface in the above diagram. 
\label{stepdescribeinverse}
\end{pnum}

	\end{proof}

\begin{lemma}\label{lemmaconnected1}
	Let $\cat{A}$ be a cyclic framed $E_2$-algebra. If the bicategory $\Ext(\cat{A})$ of extensions of $\cat{A}$ is not empty, then $\cat{A}$ is connected.
	\end{lemma}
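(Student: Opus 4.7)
The plan is to exploit the construction of the functor $\Lambda_\cat{B}^-$ carried out in step~(i) of the proof of Theorem~\ref{thmuniversal}, while being careful to observe that \emph{this particular construction does not require connectedness of $\cat{A}$}: it only uses that $\cat{Q}\to \Surf$ is an extension that admits insertions of vacua (i.e.\ $\cat{Q}\in\ExtSurf$) and that the modular $\cat{Q}$-algebra $\cat{B}$ extends $\cat{A}$ (i.e.\ $\cat{B}\in\Ext(\cat{A};\cat{Q})$). Only the later factorization $\psi_\Sigma^\cat{B}:\cat{Q}_\Sigma \to \Omega_\cat{A}(\Sigma)$ in the proof of Theorem~\ref{thmuniversal} makes implicit use of connectedness; that step is exactly what we want to establish here.

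Concretely, assume $(\cat{Q},\cat{B})\in\Ext(\cat{A})$. For a corolla $T$ and $\Sigma\in\Surf(T)$, step~(i) of the proof of Theorem~\ref{thmuniversal} produces a functor
\begin{align}
\Lambda_\cat{B}^-:\cat{Q}_\Sigma \to \smc_\partial\!\left(\int_\Sigma \cat{A},\cat{A}^{\otimes \Legs(T)}\right)\,,\qquad q\longmapsto \Lambda_\cat{B}^q\,.
\end{align}
The identification of $\cat{B}|_\Hbdy$ with the modular extension $\widehat{\cat{A}}$ (which is part of the data of an extension in $\Ext(\cat{A};\cat{Q})$), combined with Theorem~\ref{thmfhenv}, produces for every handlebody $H$ with $\partial H=\Sigma$ a canonical isomorphism
\begin{align}
\Lambda_\cat{B}^{s(H)}\cong \PhiA(H)
\end{align}
of $\int_{\partial\Sigma\times\opint}\cat{A}$-module maps $\int_\Sigma\cat{A}\to \cat{A}^{\otimes\Legs(T)}$. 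This is precisely the commutativity of the lower left square in diagram~\eqref{eqndiagramuniversal}, which holds without using any connectedness hypothesis on $\cat{A}$.

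Now let $H,H'\in\Hbdy(T)$ be any two handlebodies with $\partial H=\partial H'=\Sigma$. Since $\cat{Q}\to\Surf$ is an extension, its homotopy fiber $\cat{Q}_\Sigma$ is connected by Definition~\ref{defextension}, so the two objects $s(H),s(H')\in \cat{Q}_\Sigma$ are isomorphic. The image under $\Lambda_\cat{B}^-$ of any such isomorphism yields an isomorphism
\begin{align}
\PhiA(H)\cong \Lambda_\cat{B}^{s(H)} \cong \Lambda_\cat{B}^{s(H')}\cong \PhiA(H')
\end{align}
inside $\smc_\partial(\int_\Sigma\cat{A},\cat{A}^{\otimes \Legs(T)})$. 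Because $\Omega_\cat{A}(\Sigma)$ is by definition the replete full subgroupoid on the $\PhiA(H)$'s, this isomorphism already lives in $\Omega_\cat{A}(\Sigma)$, showing that $\Omega_\cat{A}(\Sigma)$ is connected. As $\Sigma$ and $T$ were arbitrary, $\cat{A}$ is connected in the sense of Definition~\ref{defconnected}.

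The main subtlety, and the only point that requires care, is to double-check that step~(i) of the proof of Theorem~\ref{thmuniversal} together with the identification $\Lambda_\cat{B}^{s(H)}\cong \PhiA(H)$ genuinely use only $\cat{Q}\in\ExtSurf$ and the $\Hbdy$-trivialization $\cat{B}|_\Hbdy\simeq\widehat{\cat{A}}$, and nothing about the $\Omega_\cat{A}$-groupoids being connected. Once that bookkeeping is settled, the proof reduces to the short argument above: the existence of one extension transports the \emph{a priori} connectedness of the fibers of $\cat{Q}\to\Surf$ into connectedness of $\Omega_\cat{A}(\Sigma)$.
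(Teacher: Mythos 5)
Your proof is correct and takes essentially the same route as the paper: the paper likewise reuses the construction of $\Lambda_\cat{B}^-$ from step~(i) of the proof of Theorem~\ref{thmuniversal} (packaged there as the map $\psi^{\cat{B}}:\cat{Q}\to\SurfA$), observes that it nowhere needs connectedness of the $\Omega_\cat{A}$-groupoids, and then transports the connectedness of the fibers $\cat{Q}_\Sigma$ to $\Omega_\cat{A}(\Sigma)$ via the compatibility over $\Hbdy$. The only cosmetic difference is that the paper phrases the final step as essential surjectivity of the induced functor $\cat{Q}_\Sigma\to\Omega_\cat{A}(\Sigma)$, whereas you transport an isomorphism $s(H)\cong s(H')$ directly to $\PhiA(H)\cong\PhiA(H')$.
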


	\begin{proof}
		Let us assume that we have some $\cat{B}\in \Ext(\cat{A};\cat{Q})$
		for some $\cat{Q} \in \ExtSurf$.
		Even though $\cat{A}$ is not connected, the proof 
		of Theorem~\ref{thmuniversal} still gives us a map $\psi^\cat{B} : \cat{Q} \to \SurfA$ of modular operads over $\Surf$
(the construction of this map in the proof did not need anywhere the $\Omega_\cat{A}$-groupoids are connected; without the assumption, we only have the problem that the map $\psi^\cat{B} : \cat{Q} \to \SurfA$ is not a map in $\ExtSurf$, simply because $\SurfA$ might have non-connected fibers and hence might
not qualify as an extension). 
We can verify $\psi^\cat{B} s \simeq s_\cat{A}$ for the canonical maps $s:\Hbdy \to \cat{Q}$ and $s_\cat{A}:\Hbdy \to \Surf$. 
This entails that the essentially surjective functor $\partial^{-1}(\Sigma) \ra{s_\cat{A}}\Omega_\cat{A}(\Sigma)$ factors as $\partial^{-1}(\Sigma) \ra{s} \cat{Q}_\Sigma \ra{\psi^\cat{B}} \Omega_\cat{A}(\Sigma)$. But  this is only possible if $\psi^\cat{B} : \cat{Q}_\Sigma \to  \Omega_\cat{A}(\Sigma)$ is essentially surjective. Since $\cat{Q}_\Sigma$ is connected by assumption, so is $\Omega_\cat{A}(\Sigma)$.
\end{proof}

\subsection{Uniqueness of extensions}
For a cyclic framed $E_2$-algebra $\cat{A}$, we defined the bicategory of extensions of $\cat{A}$ to a modular functor in Definition~\ref{defallext}.
The following result is mostly an application of the universality of $\SurfA$:

\begin{theorem}[Uniqueness of extensions]\label{uniquenessthm}
	For any cyclic framed $E_2$-algebra $\cat{A}$, the  realization $|B	\Ext(\cat{A})|$
	of the Duskin
	nerve of the bicategory $\Ext(\cat{A})$
	is empty or contractible. In other words,
	if there is an extension of $\cat{A}$ to a modular functor, this extension is essentially unique, i.e.\ unique up to a contractible choice.
\end{theorem}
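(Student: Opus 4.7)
The plan is to exhibit $(\SurfA,\mathfrak{F}_\cat{A})$ as a terminal object in the bicategory $\Ext(\cat{A})$ whenever the latter is non-empty; the contractibility of $|B\Ext(\cat{A})|$ then follows from the standard fact that a bicategory admitting a terminal object has contractible Duskin nerve.

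First I would dispose of the trivial case: if $\Ext(\cat{A})$ is empty, the statement holds tautologically. Otherwise, pick some $(\cat{Q}_0,\cat{B}_0)\in\Ext(\cat{A})$. By Lemma~\ref{lemmaconnected1}, the existence of any extension forces $\cat{A}$ to be connected, so Theorem~\ref{thmfa2} produces the distinguished object $(\SurfA,\mathfrak{F}_\cat{A})\in\Ext(\cat{A})$.

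The core step is showing terminality. For an arbitrary $(\cat{Q},\cat{B})\in\Ext(\cat{A})$, unwinding Definition~\ref{defallext} identifies the mapping groupoid $\Map_{\Ext(\cat{A})}((\cat{Q},\cat{B}),(\SurfA,\mathfrak{F}_\cat{A}))$ with the groupoid of pairs $(\psi,\alpha)$ where $\psi:\cat{Q}\to\SurfA$ is a 1-morphism in $\ExtSurf$ and $\alpha:\cat{B}\ra{\simeq}\psi^*\mathfrak{F}_\cat{A}$ is a morphism in $\Ext(\cat{A};\cat{Q})$ (automatically invertible as the latter is a groupoid). This groupoid is precisely the homotopy fiber over $\cat{B}$ of the functor
\begin{align}
\Map_{\ExtSurf}(\cat{Q},\SurfA)\ra{\ \psi\mapsto\psi^*\mathfrak{F}_\cat{A}\ }\Ext(\cat{A};\cat{Q}).
\end{align}
By the universality result Theorem~\ref{thmuniversal} this functor is an equivalence of groupoids, so all of its homotopy fibers are contractible. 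Hence the mapping groupoid above is contractible for every $(\cat{Q},\cat{B})$, which is the definition of $(\SurfA,\mathfrak{F}_\cat{A})$ being terminal in $\Ext(\cat{A})$ (after invoking Lemma~\ref{lemma2morphinv} to ensure these morphism categories are genuine groupoids).

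Finally, I would conclude by the general principle that the Duskin nerve of a bicategory $\mathcal{B}$ admitting a terminal object $t$ is contractible: the cone on $t$ provides, for each object $b\in\mathcal{B}$, a canonical morphism $b\to t$ realized up to contractible choice, which assembles into a contracting homotopy of $|B\mathcal{B}|$ onto the vertex $t$. I do not expect any serious obstacle here beyond making the bookkeeping precise; the real content has already been packed into Theorem~\ref{thmuniversal} and Lemma~\ref{lemmaconnected1}, so the remaining work is essentially formal.
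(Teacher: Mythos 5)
Your proposal is correct and takes essentially the same route as the paper: both reduce to Lemma~\ref{lemmaconnected1} and Theorem~\ref{thmuniversal} and then conclude by exhibiting a terminal object whose presence forces the Duskin nerve to be contractible. The only cosmetic difference is that the paper identifies $\Ext(\cat{A})$ with the slice bicategory $\ExtSurf/\SurfA$ via the Grothendieck construction (whose terminal object is $\id_{\SurfA}$), whereas you verify terminality of $(\SurfA,\mathfrak{F}_\cat{A})$ inside $\Ext(\cat{A})$ directly by recognizing its mapping groupoids as homotopy fibers of the equivalence from Theorem~\ref{thmuniversal}, which even spares you the naturality in $\cat{Q}$ that the paper's identification uses.
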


In the case of
cyclic framed $E_2$-algebras whose underlying category is finitely semisimple and equipped with an involution, 
such a uniqueness result has been established by Andersen-Ueno~\cite{andersenueno} (with a different definition of modular functor). 
The above result applies to cyclic framed $E_2$-algebras regardless of finiteness and semisimplicity assumptions and makes a statement about the \emph{space of extensions} rather than extensions up to equivalence.

\begin{proof}[\slshape Proof of Theorem~\ref{uniquenessthm}] \label{pageproofunique}
	We need to show that the bicategory $\Ext(\cat{A})$
	 is either empty or has a contractible nerve.
	 We will do this by proving that $\Ext(\cat{A})$ is contractible under the assumption that it is not empty. 
	 Under this assumption, it is automatic that $\cat{A}$ is connected by Lemma~\ref{lemmaconnected1}.
	 
	 Let us now prove the contractibility of $\Ext(\cat{A})$ in the case that $\cat{A}$ is connected:
By Definition~\ref{defallext}
 $\Ext(\cat{A})$ is the Grothendieck construction of the bicategory-valued functor $\Ext(\cat{A};-):\ExtSurf^\op \to \catf{BiCat}$
(the bicategorical version of the Grothendieck construction can be defined in complete analogy to the categorical one).
By Theorem~\ref{thmuniversal} we have equivalences
$\Ext(\cat{A};\cat{Q})\simeq \Map_{\ExtSurf}(\cat{Q},\SurfA)$ natural in $\cat{Q}$, which implies the equivalence
	\begin{align}
\label{eqnnerveext}		\Ext(\cat{A}) \simeq  \Gr \left( \cat{Q} \mapsto \Map_{\ExtSurf}(\cat{Q},\SurfA) \right)  
\end{align}
of bicategories.
But the Grothendieck construction
$\Gr \left( \cat{Q} \mapsto \Map_{\ExtSurf}(\cat{Q},\SurfA) \right)$ is, after spelling out the definition, the slice bicategory $\ExtSurf / \SurfA$. 
The identity of $\SurfA$ is a terminal object in this slice bicategory.
This proves that  the nerve of $\Gr \left( \cat{Q} \mapsto \Map_{\ExtSurf}(\cat{Q},\SurfA) \right)$ is contractible.
By~\eqref{eqnnerveext} and homotopy invariance of the nerve under equivalences of bicategories we find that $\Ext(\cat{A})$ is contractible as well.
	\end{proof}

Theorem~\ref{uniquenessthm} proves under very general conditions that a modular functor is determined up to a contractible choice
by its genus zero data. But for this it is important to understand `genus zero data' precisely
 as the data of a \emph{cyclic} framed $E_2$-algebra. 
Prescribing just the non-cyclic genus zero data, i.e.\ a balanced braided monoidal structure, leads already to a different situation about which a different statement can be made:
Let $\cat{A}$ be a framed $E_2$-algebra in $\Rex$ with underlying finite category (that is a balanced braided category).
We denote by $Z_2^\catf{bal}(\cat{A})\subset Z_2(\cat{A})$ the subcategory of the Müger center
spanned by the objects with trivial balancing
and denote by
$\catf{PIC}(Z_2^\catf{bal}(\cat{A}))$ the
Picard groupoid  of
$Z_2^\catf{bal}(\cat{A})$, i.e.\
the groupoid 
of invertible objects in $Z_2^\catf{bal}(\cat{A})$. Then the following holds:

\begin{corollary}
	For any non-cyclic framed $E_2$-algebra $\cat{A}$ in $\Rex$ with underlying finite category,
	the space 
	$|B\Ext(\cat{A})|$ 
	of extensions of $\cat{A}$ to a modular 
functor (with maps between such extensions being relative to $\cat{A}$) is either empty or 
\begin{align}
|B\Ext(\cat{A})|\simeq	  | B\catf{PIC}(Z_2^\catf{bal}(\cat{A}))| \ .   
	\end{align}
If the braiding of $\cat{A}$ is non-degenerate and the unit of $\cat{A}$ is simple,
then the space on the right is the classifying space $B\k^\times$ of the group of units of the ground field $\k$.
\end{corollary}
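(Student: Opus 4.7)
The plan is to factor the problem in two steps: first reduce via the weak uniqueness theorem to counting cyclic framed $E_2$-structures on $\cat{A}$ compatible with the given non-cyclic framed $E_2$-structure, and then identify this space with the Picard groupoid of $Z_2^\catf{bal}(\cat{A})$.

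For the first step, I would observe that any modular functor extending $\cat{A}$ restricts, via its genus-zero data, to a cyclic framed $E_2$-algebra structure on $\cat{A}$. Let $\catf{Cyc}(\cat{A})$ denote the groupoid of cyclic extensions of the given non-cyclic framed $E_2$-algebra $\cat{A}$. The restriction yields a canonical map $\Ext(\cat{A}) \to \catf{Cyc}(\cat{A})$ whose homotopy fiber over $\cat{A}^c$ is $\Ext(\cat{A}^c)$ in the sense of Definition~\ref{defallext}. By Theorem~\ref{uniquenessthm} each fiber is either empty or contractible, and by Lemma~\ref{lemmaconnected1} non-emptiness is equivalent to connectedness of $\cat{A}^c$. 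Hence, letting $\catf{Cyc}^\circ(\cat{A}) \subset \catf{Cyc}(\cat{A})$ denote the full subgroupoid of connected cyclic extensions, the realization $|B\Ext(\cat{A})|$ is, when non-empty, equivalent to $|B\catf{Cyc}^\circ(\cat{A})|$.

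For the second step, by the equivalence between cyclic framed $E_2$-algebras and ribbon Grothendieck-Verdier categories from~\cite{cyclic} recalled in Section~\ref{exseccyclicframede2}, $\catf{Cyc}(\cat{A})$ identifies with the groupoid of ribbon GV structures on the balanced braided category $\cat{A}$ whose underlying balanced braided structure is the given one. Such a structure is essentially determined by the choice of a dualizing object $K \in \cat{A}$: the representability condition for $\cat{A}(K, X \otimes -)$ together with the requirement that the duality functor $D$ be an equivalence forces $K$ to be invertible; compatibility of $D$ with the braiding forces $K$ to be transparent, that is $K \in Z_2(\cat{A})$; and the ribbon axiom $\theta_{DX} = D\theta_X$ specializes at $X = I$ to $\theta_K = \id_K$, so $K \in Z_2^\catf{bal}(\cat{A})$. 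Isomorphisms of cyclic structures relative to the non-cyclic data correspond to isomorphisms of dualizing objects, yielding $\catf{Cyc}(\cat{A}) \simeq \catf{PIC}(Z_2^\catf{bal}(\cat{A}))$.

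The main obstacle will be to verify that connectedness is insensitive to the choice of dualizing object within $Z_2^\catf{bal}(\cat{A})$, so that $\catf{Cyc}^\circ(\cat{A})$ is either empty or all of $\catf{Cyc}(\cat{A})$. This should follow from the observation that the $\PhiA$-maps and $\Omega_\cat{A}$-groupoids of Definition~\ref{defconnected} depend on the ansular functor $\widehat{\cat{A}}$, whose construction via the modular envelope~\cite{mwansular} is functorial in the cyclic structure, so twisting $K$ by an invertible transparent balanced object produces a naturally isomorphic family of handlebody skein modules. For the final assertion, if the braiding of $\cat{A}$ is non-degenerate and $I$ is simple, then $Z_2(\cat{A}) \simeq \vect$ (spanned by finite direct sums of $I$), and since $\theta_I = \id_I$ we have $Z_2^\catf{bal}(\cat{A}) \simeq \vect$. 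The Picard groupoid of $\vect$ has one isomorphism class of invertible objects with automorphism group $\k^\times$, giving $|B\catf{PIC}(\vect)| \simeq B\k^\times$.
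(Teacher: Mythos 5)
Your overall skeleton is the same as the paper's: use Theorem~\ref{uniquenessthm} to see that, over each choice of cyclic structure, the space of extensions is empty or contractible, so that (when non-empty) $|B\Ext(\cat{A})|$ is the realization of the groupoid of (connected) cyclic structures on $\cat{A}$, which one then identifies with $\catf{PIC}(Z_2^\catf{bal}(\cat{A}))$. The paper obtains this last identification by citing \cite[Theorem~4.2]{mwcenter}; you try to re-derive it, and this is where your argument breaks. It is false that the dualizing object $K$ of a ribbon Grothendieck--Verdier structure compatible with the given balanced braided structure must be transparent: the paper's own examples refute this. For the Drinfeld center of a non-spherical pivotal finite tensor category (Corollary~\ref{corzentrum}) the unique compatible dualizing object is the distinguished invertible object, which is not transparent, even though $Z_2^\catf{bal}$ is trivial; likewise for the Feigin--Fuchs boson with $\xi\neq 0$ (Examples~\ref{exvoa} and~\ref{exfeiginfuchs}) the dualizing object $\mathbb{C}_{2\xi}$ is not in the (trivial) M\"uger center. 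Your deduction $\theta_K=\id_K$ is correct, but invertibility of $K$ is also not justified without rigidity, which is not assumed here. The correct statement, and exactly what \cite[Theorem~4.2]{mwcenter} provides, is a \emph{torsor} statement: if non-empty, the groupoid of compatible cyclic structures carries a simply transitive action of $\catf{PIC}(Z_2^\catf{bal}(\cat{A}))$ --- two dualizing objects differ by an invertible transparent object with trivial twist --- and is therefore only non-canonically equivalent to it; your proposed canonical equivalence ``cyclic structure $\mapsto$ its dualizing object'' does not exist.

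Your ``main obstacle'' (that connectedness might depend on the chosen cyclic structure, so that a priori only a subgroupoid of $\catf{Cyc}(\cat{A})$ contributes) is a genuine point, which the paper's two-line proof passes over silently; but your resolution is wrong as stated. Twisting the cyclic structure by $L\in\catf{PIC}(Z_2^\catf{bal}(\cat{A}))$ does \emph{not} give naturally isomorphic handlebody skein modules: by the block formula of Corollary~\ref{corconfblocks}, the duality becomes $D(-)\otimes L$, so on a genus-$g$ handlebody the value changes by an insertion of $L^{\otimes(g-1)}$, which is a genuine change already in genus zero and in genus $\ge 2$. What can rescue the claim is the genus-one reduction of Proposition~\ref{propconnected}: for the two solid-torus fillings of $\mathbb{T}_1^2$ the factor is $L^{\otimes 0}$, i.e.\ the twist alters both $\Phi$-maps by the same equivalence independent of the filling, so connectedness of one cyclic structure implies connectedness of all its $\catf{PIC}(Z_2^\catf{bal})$-twists --- but this is a computation to be carried out (including compatibility with the module structures), not something that follows from the assertion you made.
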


\begin{proof}
	If $|B\Ext(\cat{A})|$ is not empty, then a cyclic structure exists on $\cat{A}$ and an extension to a modular functor is unique up to a contractible choice once a cyclic structure is chosen; this follows from
	Theorem~\ref{uniquenessthm}.
	Therefore, the remaining choice is exactly the ambiguity in choosing a cyclic structure.
	In $\Lexf$ and, equivalently, $\Rexf$ this ambiguity is described in \cite[Theorem~4.2]{mwcenter} through
	$\catf{PIC}(Z_2^\catf{bal}(\cat{A}))$ as groupoid or $| B\catf{PIC}(Z_2^\catf{bal}(\cat{A}))|$ as space. 
	\end{proof}

\subsection{The classification of modular functors}
In this subsection, we state and prove the main result of this paper, namely  an explicit description of the moduli space of modular functors.
This will however be relatively straightforward because most of the work was already done in the previous subsections.

\begin{theorem}[Classification of modular functors]\label{thmproofclassmf}
	The moduli space $\moduli$ of
	modular functors with values in $\smc$ is equivalent the 2-groupoid of connected cyclic framed $E_2$-algebras $\cat{A}$ in $\smc$. 
	The equivalence is afforded by restriction to genus zero. An inverse sends $\cat{A}$ to $\mathfrak{F}_\cat{A}$.
\end{theorem}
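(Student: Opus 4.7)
The plan is to exhibit the equivalence as the map of nerves induced by the genus zero restriction functor $\Phi:\MF\to\catf{CycAlg}(\framed)$ sending a modular functor $(\cat{Q},\cat{B})$ to the cyclic framed $E_2$-algebra $\cat{A}$ obtained by restricting $\cat{B}$ along the canonical section $\Hbdy\to\cat{Q}$ and then further along the genus zero inclusion $\framed\subset\Hbdy$. Upon taking Duskin nerves and geometric realizations, this yields a map $\moduli\to|B\catf{CycAlg}(\framed)|$, and the goal is to identify its essential image and verify that it is a weak equivalence onto that image.

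First, I would check that $\Phi$ lands in the 2-groupoid of \emph{connected} cyclic framed $E_2$-algebras: if $(\cat{Q},\cat{B})\in\MF$ restricts to $\cat{A}$, then $\cat{B}\in\Ext(\cat{A};\cat{Q})$, so $\Ext(\cat{A})$ is non-empty, and Lemma~\ref{lemmaconnected1} forces $\cat{A}$ to be connected. Second, for essential surjectivity, given any connected $\cat{A}$, Theorem~\ref{thmfa2} produces the pair $(\SurfA,\mathfrak{F}_\cat{A})\in\MF$ whose genus zero restriction is canonically identified with $\cat{A}$ (since $\mathfrak{F}_\cat{A}$ restricts to the modular extension $\widehat{\cat{A}}$ over $\Hbdy$, which in turn restricts to $\cat{A}$ over $\framed$). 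This simultaneously exhibits $\cat{A}\mapsto\mathfrak{F}_\cat{A}$ as a set-theoretic section of $\Phi$ on the connected locus.

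The main step, and the only one requiring real input, is to verify that the homotopy fibers of $\Phi$ over every connected $\cat{A}$ are contractible. By Definition~\ref{defallext}, the homotopy fiber of $\Phi$ over $\cat{A}$ is precisely the bicategory $\Ext(\cat{A})$ of all extensions of $\cat{A}$ to a modular functor, and Theorem~\ref{uniquenessthm} asserts that $|B\Ext(\cat{A})|$ is either empty or contractible, with the second alternative holding exactly when $\Ext(\cat{A})$ is non-empty; for connected $\cat{A}$, the pair $(\SurfA,\mathfrak{F}_\cat{A})$ produced above certifies non-emptiness, so the fiber is contractible. The core content of this step was already carried out in the proof of Theorem~\ref{uniquenessthm} via the universality of $\SurfA$ established in Theorem~\ref{thmuniversal}, which identifies $\Ext(\cat{A})$ with the slice $\ExtSurf/\SurfA$.

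Combining these three observations, $\Phi$ restricts to a functor from $\MF$ onto the 2-groupoid of connected cyclic framed $E_2$-algebras that is essentially surjective and has contractible homotopy fibers at every point of its image, hence induces a weak equivalence of Duskin nerves by the standard fibration-sequence argument; the assignment $\cat{A}\mapsto\mathfrak{F}_\cat{A}$ then gives the explicit inverse. The hard part is not this final packaging but the two inputs it relies on, namely the universality of $\SurfA$ (Theorem~\ref{thmuniversal}) and the connectedness implication (Lemma~\ref{lemmaconnected1}); granted those, Theorem~\ref{thmproofclassmf} is essentially a formal consequence.
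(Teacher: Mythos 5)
Your proposal is correct and follows essentially the same route as the paper: genus zero restriction lands in connected algebras by Lemma~\ref{lemmaconnected1}, the fiber over $\cat{A}$ is $\Ext(\cat{A})$, which is non-empty for connected $\cat{A}$ by Theorem~\ref{thmfa2} and hence contractible by Theorem~\ref{uniquenessthm}. The only difference is in the final packaging: where you appeal to a ``standard fibration-sequence argument'', the paper makes this precise by observing that $\MF$ is by definition the Grothendieck construction of $\cat{A}\mapsto\Ext(\cat{A})$ and invoking the bicategorical Thomason theorem, $B\MF\simeq\hocolimsub{\cat{A}}B\Ext(\cat{A})$, which is the cleanest way to justify that contractibility of the fibers yields an equivalence of nerves without needing any fibrancy of the restriction functor itself.
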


If we take into account that cyclic framed $E_2$-algebras are self-dual balanced braided algebras by the main result of \cite{mwansular}
(this was recalled as Theorem~\ref{thmmodext} in this article),  $\moduli$ 
is also equivalent to the 2-groupoid of connected self-dual balanced braided algebras in $\smc$.

The proof of Theorem~\ref{thmproofclassmf} will use a bicategorical version of  
Thomason's Theorem \cite[Theorem~1.2]{thomason}
that can be obtained by a straightforward modification of the version of this result given in~\cite{ccg,ceg}: 
Recall that for 
a functor $F:\cat{C}\to \catf{BiCat}$ 
from a 2-groupoid $\cat{C}$,
seen as tricategory, to the tricategory of bicategories
(by default we understand this to be a functor is the weak sense), 
the Grothendieck construction $\Gr F$ is the bicategory formed by pairs $(c,x)$, where $c\in \cat{C}$ and $x\in F(c)$, in analogy to the usual Grothendieck construction.
 Thomason's Theorem gives us a canonical homotopy equivalence
\begin{align}
B \Gr F \simeq \hocolimsub{c\in \cat{C}} BF(c) \label{thomasontheqnbicat}
\end{align}
of simplicial sets (but we can of course apply the geometric realization
$|-|$ to get a homotopy equivalence of spaces).

\begin{proof}[\slshape Proof of Theorem~\ref{thmproofclassmf}]
With Thomason's Theorem~\eqref{thomasontheqnbicat}, we can prove
the desired statement as follows: 
	We consider the genus zero restriction map
	$
	R:\MF  \to \catf{CycAlg}(\framed)  
	$
	and first observe that by Lemma~\ref{lemmaconnected1} it actually factors through
	the full sub-2-groupoid $\catf{CycAlg}^\Omega(\framed)\subset \catf{CycAlg}(\framed)$ of connected cyclic framed $E_2$-algebras.
	 Therefore, the restriction map $R:\MF  \to \catf{CycAlg}(\framed)$ restricts in range to a map
		\begin{align}
	\widetilde R:\MF  \to \catf{CycAlg}^\Omega(\framed)  \ .     \label{eqnwidetildeR}
	\end{align}
	The homotopy fiber of~\eqref{eqnwidetildeR} is $\Ext(\cat{A})$ by definition.
	In other words, we can describe $\MF$ as a Grothendieck construction
	$\MF= \Gr \left(\cat{A}\mapsto \Ext(\cat{A})\right)$; in fact, that is just a different way to express the definition of $\MF$.
	Now Thomason's Theorem~\eqref{thomasontheqnbicat}
	 gives us
	\begin{align}
	B \MF  \simeq
	\hocolimsub{\cat{A} \in \catf{CycAlg}^\Omega(\framed) }
	 B\Ext(\cat{A}) \ . 
	\end{align}
	But $B\Ext(\cat{A})$, for an arbitrary cyclic framed $E_2$-algebra is always empty or contractible
	 by Theorem~\ref{uniquenessthm}, and for those algebras in
	 $\catf{CycAlg}^\Omega(\framed)$,
	 the latter is the case by Lemma~\ref{lemmaconnected1}.
	 This implies that~\eqref{eqnwidetildeR} induces an equivalence after taking the nerve.
	 The statement about the inverse equivalence follows from the description~\eqref{eqnnerveext} of $\Ext(\cat{A})$. 
	\end{proof}
	
Since by Theorem~\ref{thmproofclassmf}
$\moduli$ forms a \emph{full} sub-2-groupoid of cyclic framed $E_2$-algebras, we immediately obtain:

\begin{corollary}\label{corautos}
	Let $\cat{A}$ be a cyclic framed $E_2$-algebra.
	Then the 2-group of automorphisms of $\cat{A}$ as cyclic framed $E_2$-algebra is equivalent to the 2-group of automorphisms of any modular functor extending $\cat{A}$.
\end{corollary}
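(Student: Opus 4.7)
The plan is to deduce this as a direct formal consequence of the classification theorem (Theorem~\ref{thmproofclassmf}). The first observation is that if $\cat{A}$ admits \emph{any} extension to a modular functor, then $\cat{A}$ is automatically connected: this is exactly the content of Lemma~\ref{lemmaconnected1}. So without loss of generality we may assume $\cat{A}$ is a connected cyclic framed $E_2$-algebra, and in particular that $\cat{A}$ corresponds under the classification to the modular functor $\mathfrak{F}_\cat{A}$ (which is the essentially unique extension of $\cat{A}$ by the weak uniqueness statement of Theorem~\ref{uniquenessthm}).

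Next, I would invoke the fact that Theorem~\ref{thmproofclassmf} provides an equivalence of 2-groupoids between $\moduli$ (equivalently $\MF$ up to nerve-equivalence) and the full sub-2-groupoid of connected cyclic framed $E_2$-algebras inside $\catf{CycAlg}(\framed)$, with the equivalence afforded by genus zero restriction and inverse given by $\cat{A}\mapsto \mathfrak{F}_\cat{A}$. Any equivalence of 2-groupoids induces, at every object, an equivalence of the automorphism 2-groups (i.e.\ of the monoidal endomorphism bicategories of the corresponding objects), essentially by the fact that the mapping groupoid $\Map(X,X)$ is preserved up to equivalence by any equivalence of $(2,1)$-categories. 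Applied to $\cat{A}$ on one side and $\mathfrak{F}_\cat{A}$ on the other, this yields
\begin{equation}
\Aut_{\catf{CycAlg}(\framed)}(\cat{A}) \;\simeq\; \Aut_\moduli(\mathfrak{F}_\cat{A})
\end{equation}
as 2-groups. Since by Theorem~\ref{uniquenessthm} any modular functor extending $\cat{A}$ is equivalent to $\mathfrak{F}_\cat{A}$, the right-hand side is canonically the automorphism 2-group of \emph{any} modular functor extending $\cat{A}$.

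No step here is really hard — the only subtle point to spell out is that automorphisms of $\mathfrak{F}_\cat{A}$ in $\moduli$ are computed with respect to the correct notion of equivalence discussed in Remark~\ref{remarkmodulispace}, so that two automorphisms which differ only by reparametrising the extension $\SurfA$ through itself are identified; but the universality statement of Theorem~\ref{thmuniversal} ensures that automorphisms of $\mathfrak{F}_\cat{A}$ as modular functor are in bijection, up to coherent isomorphism, with self-equivalences of the extension $\SurfA$ over $\Surf$ relative $\Hbdy$ together with the corresponding transformation of algebra structures, which is exactly what genus zero restriction turns into automorphisms of $\cat{A}$. This is the only bookkeeping step and consists of unpacking Definition~\ref{defallext} and applying Theorem~\ref{thmuniversal}; it presents no real obstacle beyond verifying that the 2-cells match up, which is automatic from Lemma~\ref{lemma2morphinv}.
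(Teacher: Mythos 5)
Your argument is correct and is essentially the paper's own: the corollary is read off directly from Theorem~\ref{thmproofclassmf}, using that the connected cyclic framed $E_2$-algebras form a \emph{full} sub-2-groupoid of $\catf{CycAlg}(\framed)$, so the classification equivalence identifies $\Aut(\cat{A})$ with the automorphism 2-group of the corresponding point of $\moduli$. The additional bookkeeping you include (Lemma~\ref{lemmaconnected1} to reduce to the connected case, Theorem~\ref{uniquenessthm} to handle ``any'' extension, and the appeal to Theorem~\ref{thmuniversal} for automorphisms in $\moduli$) merely spells out what the paper treats as immediate.
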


\section{Sufficient conditions for connectedness\label{seccof}}
The connectedness condition that is needed for a cyclic framed $E_2$-algebra to uniquely extend to a modular functor is relatively abstract.
Even though one of the different equivalent characterizations of connectedness just boils down to a genus one condition, it will still be involved to check the condition in practice. In this section, we provide concrete algebraic conditions that are sufficient for connectedness. These will be used in the next section to discuss classes of examples of modular functors.

\subsection{Triviality of the mapping class group action on factorization homology}
We start with the following observation
that will still not produce a concrete criterion, but instead a valuable tool:

\begin{lemma}\label{lemmatrivcon}
	If for a cyclic framed $E_2$-algebra $\cat{A}$ and every surface $\Sigma$, 
	the 1-morphism $f_* : \int_\Sigma \cat{A}\to\int_{\Sigma}\cat{A}$ is trivializable for all $f\in\Map(\Sigma)$, i.e.\ isomorphic as a $\int_{\partial \Sigma \times [0,1]} \cat{A}$-module map to the identity, then $\cat{A}$ is connected, and the resulting $\Map(\Sigma)$-action on $\Aut_\partial(\PhiA (H)  )$ in~\eqref{eqnsesextension} is by inner automorphisms.
	\end{lemma}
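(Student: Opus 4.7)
The plan is to reduce the statement to a direct application of Corollary~\ref{corxi} together with Definition~\ref{defconnected} (or equivalently the reformulation in Lemma~\ref{lemmaconectedcompl}). Concretely, I would show something stronger than is needed: that under the triviality hypothesis, \emph{any} two handlebody skein modules $\PhiA(H)$ and $\PhiA(H')$ for handlebodies $H,H'$ with $\partial H=\partial H'=\Sigma$ are isomorphic as $\int_{\partial\Sigma\times\opint}\cat{A}$-module maps, which is manifestly a stronger form of connectedness of $\Omega_\cat{A}(\Sigma)$.

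The central step is the following: given $H,H'\in\Hbdy(T)$ with $\partial H=\partial H'=\Sigma$, pick any diffeomorphism $g:H\to H'$ (such a $g$ exists because handlebodies are classified up to diffeomorphism by their genus, which is determined by the common boundary $\Sigma$). Setting $f:=\partial g\in\Map(\Sigma)$, the identification $\xi_g:f.H\to H'$ from Remark~\ref{remisofibprop2} together with Proposition~\ref{propenvfh2x} (packaged as Corollary~\ref{corxi}) yields a canonical isomorphism of module maps
\begin{equation*}
	\PhiA(H')\circ f_*\ \cong\ \PhiA(H)\ .
\end{equation*}
By the hypothesis, $f_*:\int_\Sigma\cat{A}\to\int_\Sigma\cat{A}$ is isomorphic, as a $\int_{\partial\Sigma\times\opint}\cat{A}$-module map, to the identity. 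Substituting this isomorphism into the relation above yields $\PhiA(H')\cong \PhiA(H)$ in the groupoid $\smc_\partial\bigl(\int_\Sigma\cat{A},\cat{A}^{\otimes\Legs(T)}\bigr)$.

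Once this is in hand, the conclusion is immediate: by construction (Definition~\ref{defomega}), the groupoid $\Omega_\cat{A}(\Sigma)$ is the replete full subgroupoid spanned by the objects $\PhiA(H)$ as $H$ ranges over handlebodies with $\partial H=\Sigma$. Since we have just shown that all such objects are pairwise isomorphic, $\Omega_\cat{A}(\Sigma)$ is connected for every $\Sigma$. By Definition~\ref{defconnected}, this is exactly what it means for $\cat{A}$ to be connected. I do not anticipate a real obstacle here; the only subtlety worth flagging explicitly is that the isomorphism supplied by Corollary~\ref{corxi} is already an isomorphism \emph{of module maps} over $\int_{\partial\Sigma\times\opint}\cat{A}$, so no additional argument is needed to upgrade an isomorphism in $\smc$ to one in $\smc_\partial$.
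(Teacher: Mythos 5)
Your argument is correct and is essentially the paper's own proof: the paper likewise picks a diffeomorphism $g:H\to H'$, sets $f=\partial g$, invokes Proposition~\ref{propenvfh2x} (equivalently Corollary~\ref{corxi}) to get $\PhiA(H')f_*\cong\PhiA(H)$ as module maps, and then uses the hypothesis $f_*\cong\id$ to conclude connectedness of every $\Omega_\cat{A}(\Sigma)$. Your version only adds the explicit justifications (existence of $g$, the $\xi_g$-identification, and that the isomorphisms are already in $\smc_\partial$), which the paper leaves implicit.
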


\begin{proof}
	For handlebodies $H$ and $H'$ with $\partial H = \partial H'=\Sigma$, pick an isomorphism $g :H\to H'$.
	Then $\PhiA(H')f_*\cong \PhiA(H)$ with $f:=\partial g$ by Proposition~\ref{propenvfh2x}.
	Together with Theorem~\ref{thmdefsurfa2}, this implies the assertion.
	\end{proof}
	
	\begin{remark}
The Lemma assumes that we can trivialize each $f_*$ separately. It is not implied that we can trivialize \emph{coherently}.
	\end{remark}
	\begin{remark}
		The condition that the action is inner implies that the extension we obtain in that case, contains a central extension of $\Map(\Sigma)$ by the center of $\Aut_{\partial}(\PhiA(H))$.
	\end{remark}

	The triviality of the mapping class group action on factorization homology can be checked locally:
	
	\begin{lemma}\label{lemmaactfh}
		Let $\cat{A}$ be a cyclic framed $E_2$-algebra in $\smc$.
		The action $f_* : \int_{\Sigma }\cat{A}\to\int_{\Sigma }\cat{A}$
		of every diffeomorphism $f:\Sigma\to\Sigma$
		of every surface $\Sigma$ is isomorphic, as 
		$\int_{\partial \Sigma \times [0,1]}\cat{A}$-module
		map, to the identity
		if and only if
		this is the case for the action 
		$d_* : \int_{\mathbb{S}^1 \times\opint } \cat{A} \to \int_{\mathbb{S}^1 \times\opint } \cat{A}$
		of the Dehn twist 
		on the factorization homology of the cylinder.
	\end{lemma}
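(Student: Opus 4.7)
One direction is immediate: specializing the hypothesis to $\Sigma=\mathbb{S}^1\times\opint$ and to the Dehn twist gives exactly the stated cylinder condition. The content is in the converse, so assume $d_*\cong\id$ as an $\int_{\partial(\mathbb{S}^1\times\opint)\times\opint}\cat{A}$-module map and let $\Sigma$ be an arbitrary surface with $f\in\Map(\Sigma)$. The plan is to reduce $f$ to a product of Dehn twists around simple closed curves and to exploit the fact that each such Dehn twist can be made to be supported inside a tubular cylindrical neighborhood of its defining curve.

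First, I would invoke the Lickorish--Humphries theorem (together with its analogue for surfaces with parametrized boundary, where one also allows Dehn twists parallel to boundary curves) to write $f$ as a finite product of Dehn twists $T_{\gamma_1},\dots,T_{\gamma_m}$ around simple closed curves $\gamma_i\subset\Sigma$. Since being isomorphic to the identity as a module map is closed under composition (if $g_*\cong \id$ and $h_*\cong\id$, then $(gh)_*\cong g_*h_*\cong\id$), it suffices to establish the triviality of $T_{\gamma*}$ for each individual simple closed curve $\gamma\subset\Sigma$.

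For a fixed $\gamma$, choose a tubular neighborhood $N(\gamma)\cong\mathbb{S}^1\times\opint$ and an orientation-preserving embedding $j:\mathbb{S}^1\times\opint\hookrightarrow \Sigma$ with image $N(\gamma)$. The Dehn twist $T_\gamma$ is the extension by identity of the cylinder Dehn twist $d:\mathbb{S}^1\times\opint\to\mathbb{S}^1\times\opint$ along $j$. By excision (Section~\ref{secprefh}) we have a canonical identification
\begin{align}
\int_\Sigma \cat{A} \;\simeq\; \int_{\mathbb{S}^1\times\opint}\cat{A} \otimes_{\int_{\partial(\mathbb{S}^1\times\opint)\times\opint}\cat{A}} \int_{\Sigma\setminus\mathring{N}(\gamma)}\cat{A},
\end{align}
and functoriality of factorization homology with respect to embeddings (Remark~\ref{remactiononfh}) identifies $T_{\gamma*}$ with $d_*\otimes\id$ under this equivalence. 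Picking an isomorphism $d_*\cong\id$ of $\int_{\partial(\mathbb{S}^1\times\opint)\times\opint}\cat{A}$-module maps and tensoring with $\id$ of the complementary factor produces an isomorphism $T_{\gamma*}\cong \id$. One then checks that this isomorphism is an isomorphism of $\int_{\partial\Sigma\times\opint}\cat{A}$-module maps, which follows because the action of $\int_{\partial\Sigma\times\opint}\cat{A}$ on $\int_\Sigma\cat{A}$ is obtained by restriction along an algebra map from $\int_{\partial N(\gamma)\times\opint\sqcup\partial\Sigma\times\opint}\cat{A}$ and the isomorphism $d_*\otimes\id\cong\id\otimes\id$ is manifestly equivariant for the latter.

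The main technical subtlety I anticipate is bookkeeping the module structures: one has to verify that the trivialization obtained from the cylinder extends as a trivialization of module maps for the outer boundary $\partial\Sigma$, and one must accommodate the case where $\gamma$ is boundary-parallel (in which case the relevant cylinder already sits next to $\partial\Sigma$). Both points are handled by choosing collars compatibly and using that the cylinder trivialization is, by assumption, a trivialization of $\int_{(\mathbb{S}^1\sqcup\mathbb{S}^1)\times\opint}\cat{A}$-module maps, hence pulls back to a trivialization of module maps over any sub-boundary. Once these compatibilities are in place, composing the trivializations of $T_{\gamma_1*},\dots,T_{\gamma_m*}$ yields $f_*\cong\id$, completing the proof.
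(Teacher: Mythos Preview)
Your argument is correct and follows essentially the same route as the paper: reduce to Dehn twists via generation of the mapping class group, then use excision to identify the action of a Dehn twist on $\int_\Sigma\cat{A}$ with $d_*\otimes\id$ on the factor coming from a tubular neighborhood. The paper's proof is just a one-sentence sketch of exactly this, citing Dehn twist generation and excision; your additional care about the module structures and boundary-parallel curves is appropriate bookkeeping that the paper leaves implicit.
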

	
	\begin{proof}
		First recall from Remark~\ref{remactiononfh}
		that the isomorphism class
		of $f_* : \int_{\Sigma }\cat{A}\to\int_{\Sigma }\cat{A}$,
		as  
		$\int_{\partial \Sigma \times [0,1]}\cat{A}$-module
		map, depends only on the mapping class of $f$.
		Now the statement follows from the fact
		that the pure mapping class group of any surface is generated by Dehn twists (see~\cite[Theorem~4.11]{farbmargalit} for a textbook reference)
		and the excision property of factorization homology.
	\end{proof}

	\subsection{Cofactorizability}
	Using the tools from the previous subsection, we will now prove that  \emph{cofactorizability}
of $E_2$-algebras~\cite[Section~3.3]{bjss} is sufficient for connectedness. 
Let us review the notion:
If $\smc$ has an internal hom $[-,-]$, then for any algebra $\cat{A}$ in $\smc$ the multiplication $\cat{A}\otimes\cat{A}\to\cat{A}$ induces a map 
$\cat{A}\to [\cat{A},\cat{A}]$. If $\cat{A}$ is an $E_2$-algebra, this map descends to the cocenter $\int_{\mathbb{S}^1 \times \opint } \cat{A}$ and yields an algebra  map
\begin{align}
	\int_{\mathbb{S}^1 \times \opint } \cat{A} \to [\cat{A}, \cat{A}] \ . \label{eqncofmap}
\end{align} 

\begin{definition}
	Let $\smc$ be a symmetric monoidal $(2,1)$-category with internal hom.
	An $E_2$-algebra $\cat{A}$
	is called \emph{cofactorizable} if~\eqref{eqncofmap} is an equivalence.
\end{definition}

\begin{remark}\label{remcof}
	Let $\cat{A}$ be a cyclic framed $E_2$-algebra in a symmetric monoidal $(2,1)$-category with internal hom.
	Then by direct inspection 
	the map
	\begin{align} \PhiA( \mathbb{D}^2 \times [0,1]   ):\int_{\mathbb{S}^1 \times \opint } \cat{A} \to \cat{A}\otimes \cat{A}   \label{eqnphimapcof}
	\end{align} agrees with~\eqref{eqncofmap} after the identification $[\cat{A}, \cat{A}]\simeq \cat{A}\otimes\cat{A}$ via the pairing. This implies that
	$\cat{A}$ is cofactorizable if and only if~\eqref{eqnphimapcof} is an equivalence. 
	Consequently, we will by definition refer to a cyclic framed $E_2$-algebra $\cat{A}$
	 as cofactorizable if~\eqref{eqnphimapcof} is an equivalence \emph{regardless of whether $\smc$ has internal homs}.
	 Since cofactorizability
	 is just the invertibility of
	  $\PhiA( \mathbb{D}^2 \times [0,1]   )$, it is a genus zero condition.
\end{remark}

\begin{proposition}\label{propstatic}
	For any cofactorizable
	cyclic framed $E_2$-algebra $\cat{A}$ in $\smc$,  any diffeomorphism $f:\Sigma \to \Sigma$
	of any surface $\Sigma$ acts on $\int_{\Sigma}\cat{A}$ by a 1-morphism
	that is, as $\int_{\partial \Sigma \times [0,1]}\cat{A}$-module
	map, isomorphic to the identity.
	In particular, cofactorizability implies connectedness.
\end{proposition}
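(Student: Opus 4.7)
The plan is to combine Lemma~\ref{lemmatrivcon}, Lemma~\ref{lemmaactfh} and Corollary~\ref{corxi} with cofactorizability. By Lemma~\ref{lemmatrivcon}, in order to deduce connectedness from the first claim of the proposition, it is enough to show that every mapping class of every surface acts on factorization homology by a $\int_{\partial\Sigma\times\opint}\cat{A}$-module map that is isomorphic to the identity. By Lemma~\ref{lemmaactfh}, this reduces in turn to a single computation: showing that the Dehn twist $d$ around the core circle of the cylinder $\mathbb{S}^1\times\opint$ acts on $\int_{\mathbb{S}^1\times\opint}\cat{A}$ by a 1-morphism isomorphic to the identity as a module map over $\int_{(\mathbb{S}^1\sqcup\mathbb{S}^1)\times\opint}\cat{A}$.

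To establish this one case, I would fill the cylinder with the handlebody $H=\mathbb{D}^2\times\opint$, whose boundary is $\mathbb{S}^1\times\opint$ (with the two end disks $\mathbb{D}^2\times\{0,1\}$ serving as the parametrized disks in $\partial H$). The key topological input is that the Dehn twist $d$ around the core of $\mathbb{S}^1\times\opint$ extends to a diffeomorphism $\tilde d$ of $H$ fixing both parametrized disks pointwise; explicitly, one performs the twist in a collar of the boundary cylinder and extends by the identity into the interior. Hence $d$ lies in the image of $\Map(H)\to\Map(\partial H)$, and Corollary~\ref{corxi} (applied to $g=\tilde d$, which gives $f=\partial\tilde d=d$) furnishes a canonical isomorphism
\begin{equation}
\PhiA(H)\circ d_*\ \cong\ \PhiA(H)
\end{equation}
of $\int_{\partial\Sigma\times\opint}\cat{A}$-module maps.

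Now the point of cofactorizability enters: by Remark~\ref{remcof}, the map $\PhiA(H)=\PhiA(\mathbb{D}^2\times\opint):\int_{\mathbb{S}^1\times\opint}\cat{A}\to \cat{A}\otimes\cat{A}$ is precisely the comparison 1-morphism whose invertibility is the defining property of cofactorizability. Since $\PhiA(H)$ is an equivalence of module maps, we may cancel it on the left in the displayed isomorphism and conclude that $d_*\cong\id$ as a $\int_{(\mathbb{S}^1\sqcup\mathbb{S}^1)\times\opint}\cat{A}$-module map. Lemma~\ref{lemmaactfh} then upgrades this to the statement about arbitrary surfaces and mapping classes, proving the first claim; connectedness is the consequence obtained by feeding this into Lemma~\ref{lemmatrivcon}.

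I expect no serious obstacle: all the hard technical work is already absorbed into Lemmas~\ref{lemmatrivcon} and~\ref{lemmaactfh}, Corollary~\ref{corxi}, and the identification of cofactorizability with invertibility of $\PhiA(\mathbb{D}^2\times\opint)$ in Remark~\ref{remcof}. The only mildly delicate point is verifying that the core Dehn twist on the cylinder genuinely extends to a diffeomorphism of the solid cylinder rel parametrized disks, but this is a standard and elementary three-dimensional picture.
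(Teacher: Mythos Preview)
Your proposal is correct and follows essentially the same argument as the paper's proof: reduce to the cylinder Dehn twist via Lemma~\ref{lemmaactfh}, apply Corollary~\ref{corxi} with $H=\mathbb{D}^2\times[0,1]$ to obtain $\PhiA(H)\circ d_*\cong\PhiA(H)$, cancel $\PhiA(H)$ using cofactorizability, and conclude via Lemma~\ref{lemmatrivcon}. The only difference is that you spell out explicitly why the Dehn twist extends to the solid cylinder, which the paper leaves implicit in its invocation of Corollary~\ref{corxi}.
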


\begin{proof}
	For the Dehn twist $d$ of the cylinder,
	Corollary~\ref{corxi} gives us an isomorphism
	\begin{align}
		\PhiA( \mathbb{D}^2 \times [0,1]   ) \circ d_* \cong \PhiA( \mathbb{D}^2 \times [0,1]   )
	\end{align}
	of maps
	of modules over $\left(\int_{ \mathbb{S}^1\times\opint     } \cat{A}\right)^{\otimes 2}$. 
	Since $	\PhiA( \mathbb{D}^2 \times [0,1]   )$ is assumed to be invertible, this implies that $d_*$ is isomorphic, as $\left(\int_{ \mathbb{S}^1\times\opint     } \cat{A}\right)^{\otimes 2}$-module map, to the identity.
	By Lemma~\ref{lemmaactfh} this implies that every mapping class acts trivially on factorization homology.
	Thanks to Lemma~\ref{lemmatrivcon}, this implies connectedness.
\end{proof}

\begin{theorem}\label{thmcofactorizable}
	For any cofactorizable cyclic framed $E_2$-algebra, there is an essentially unique extension to a modular functor. More precisely, we have
	an embedding of 2-groupoids
	\begin{align}
		\{\text{cofactorizable self-dual balanced braided algebras in $\smc$}\}	\ha{ \ \ \ }   \moduli\ , \quad \cat{A}\to\mathfrak{F}_\cat{A} \ . 
	\end{align}
\end{theorem}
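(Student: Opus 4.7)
The plan is to deduce this theorem as a direct corollary of the main classification result (Theorem~\ref{thmproofclassmf}) together with the sufficiency criterion from Proposition~\ref{propstatic}. First, Proposition~\ref{propstatic} establishes that cofactorizability implies connectedness, so every cofactorizable self-dual balanced braided algebra $\cat{A}$ lies in the domain of the construction $\cat{A}\mapsto \mathfrak{F}_\cat{A}$ as defined in Theorem~\ref{thmfa2}. Second, Theorem~\ref{thmproofclassmf} states that the restriction of this construction to the 2-groupoid of connected self-dual balanced braided algebras is an equivalence onto $\moduli$. Hence the composition
\begin{align}
\{\text{cofactorizable}\} \ha{} \{\text{connected}\} \xrightarrow{\ \cat{A}\mapsto \mathfrak{F}_\cat{A}\ } \moduli
\end{align}
is well-defined.

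The remaining content of the statement is that this composition is an embedding of 2-groupoids, i.e.\ fully faithful. For this, it suffices to observe that cofactorizability is a \emph{property}, not additional structure: by Remark~\ref{remcof}, $\cat{A}$ is cofactorizable exactly when the morphism $\PhiA(\mathbb{D}^2\times[0,1]):\int_{\mathbb{S}^1\times\opint}\cat{A}\to\cat{A}\otimes\cat{A}$ is invertible. Hence the 2-groupoid of cofactorizable self-dual balanced braided algebras sits as a full sub-2-groupoid inside that of all self-dual balanced braided algebras, and in particular inside the connected ones. Composing a full subgroupoid inclusion with an equivalence yields a fully faithful functor, which is what we mean by an embedding.

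There is essentially no obstacle: all the real work has been done in Proposition~\ref{propstatic} (where the cofactorizability hypothesis is used together with the homotopy fixed point structure of Corollary~\ref{corxi} and the locality statement of Lemma~\ref{lemmaactfh} to trivialize every mapping class action up to isomorphism) and in Theorem~\ref{thmproofclassmf} (which relies on the universality of $\SurfA$ and Thomason's theorem). The only thing worth being slightly careful about is the "embedding" wording: one should make explicit that fullness and faithfulness are both inherited from the fact that cofactorizability cuts out a replete full sub-2-groupoid, and then note that essential surjectivity onto all of $\moduli$ fails in general, which is precisely the content of Remark~\ref{remcofnotesssurj} that follows.
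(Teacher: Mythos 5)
Your proposal is correct and follows exactly the paper's own route: the paper's proof likewise consists of citing Proposition~\ref{propstatic} for "cofactorizable $\Rightarrow$ connected" and then invoking Theorem~\ref{thmproofclassmf}. Your added remark that cofactorizability is a property (so the cofactorizable algebras form a full sub-2-groupoid of the connected ones, whence fullness and faithfulness of the composite) just makes explicit what the paper leaves implicit.
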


\begin{proof}Proposition~\ref{propstatic}
	says that cofactorizable cyclic framed $E_2$-algebras are connected. The rest is a consequence of  Theorem~\ref{thmproofclassmf}.
	\end{proof}

\begin{remark}\label{remcofnotesssurj}
	This embedding is not an equivalence in general:
	If we take $\smc$ to be a symmetric monoidal 1-category, seen as a symmetric monoidal $(2,1)$-category with only identity 2-morphisms,
	then cyclic framed $E_2$-algebras in $\smc$ are just commutative Frobenius algebras, so that any of those gives rise to a two-dimensional topological field theory (hence to a modular functor). This is a consequence of $\pi_0(\Hbdy)\cong \pi_0(\Surf)$ and recovers the well-known classification of ordinary two-dimensional topological field theories~\cite{abrams,kocktft}. 
	But non-trivial Frobenius algebras are in general not cofactorizable.
\end{remark}

\subsection{Simplification of $\SurfA$\label{secsimplifysurfa}}
In this subsection we will calculate the fibers of the extensions appearing
in Theorem~\ref{thmdefsurfa2}
in relevant special cases.

The following notion is a natural strengthening of cofactorizability:

\begin{definition}\label{defphiinv}
	A cyclic framed $E_2$-algebra $\cat{A}$ in $\smc$ is called \emph{$\Phi$-invertible on a connected surface $\Sigma \in \Surf(T)$}
	if for some	  handlebody $H$ with boundary $\partial H=\Sigma$
	the 1-morphism $\PhiA(H):\int_{\partial H}\cat{A}\to \cat{A}^{\otimes\Legs(T)}$ is an equivalence.
	A cyclic framed $E_2$-algebra is called \emph{$\Phi$-invertible} if it is $\Phi$-invertible on all connected surfaces.
\end{definition}

\begin{remark}\label{remallH}
	If $\PhiA(H)$ is an equivalence for \emph{some} handlebody with boundary $\Sigma$, then this is true for \emph{all} handlebodies with boundary $\Sigma$.
	This follows from 
	Proposition~\ref{propenvfh2x}.
\end{remark}

\begin{remark}\label{remconnectedness}
	In detecting whether $\Omega_\cat{A}(\Sigma)$ is connected, the following implications are helpful:
	\begin{align}
		\text{$\cat{A}$ is $\Phi$-invertible at $\Sigma$} \quad\Longrightarrow\quad  \begin{array}{c} \text{$f_* \cong \id$ for all $f\in \Map(\Sigma)$}\\ \text{as module map}\end{array} \quad \Longrightarrow\quad 
		\text{$\Omega_\cat{A}(\Sigma)$ connected} \ . 
	\end{align}
	The proof of the first implication repeats essentially the argument of Proposition~\ref{propstatic}
	while the second one is Lemma~\ref{lemmatrivcon}.
\end{remark}

\begin{proposition}\label{propfullyfaithful} 
	Let $\cat{A}$ be a cyclic framed $E_2$-algebra.
	If $\cat{A}$ is \label{lemmafullyfaithfuli}
	$\Phi$-invertible on a connected surface $\Sigma\in\Surf(T)$
	(as defined in Definition~\ref{defphiinv}), then the extension
	of $\Map(\Sigma)$ defined by $p_\cat{A}:\SurfA \to \Surf$ is 
	of the form
	\begin{align}	1 \to \Aut_\partial(\id_{\int_\Sigma \cat{A}}) \to \MapA( \Sigma) \to \Map(\Sigma)\to 1 \ , \end{align}
	i.e.\ it is a central extension
	of $\Map(\Sigma)$ by the abelian group $\Aut_\partial(\id_{\int_\Sigma \cat{A}})$ of automorphisms of $\id_{\int_\Sigma \cat{A}}$ as $\int_{\partial\Sigma \times\opint } \cat{A}$-module map.
\end{proposition}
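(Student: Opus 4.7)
The plan is to build on Theorem~\ref{thmdefsurfa2} and Remark~\ref{remconnectedness}. By the first implication of Remark~\ref{remconnectedness}, $\Phi$-invertibility on $\Sigma$ already forces $\Omega_\cat{A}(\Sigma)$ to be connected, so the homotopy fiber of $p_\cat{A}: \SurfA \to \Surf$ over $\Sigma$ is connected and the long exact homotopy sequence (as in~\eqref{eqnsesext}) yields
$$1 \to \Aut_\partial(\PhiA(H)) \to \MapA(\Sigma) \to \Map(\Sigma) \to 1,$$
with $\Map(\Sigma)$ acting on the kernel by inner automorphisms, exactly as in the conclusion of Theorem~\ref{thmdefsurfa2}. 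The proof then reduces to three tasks: rewrite the kernel as $\Aut_\partial(\id_{\int_\Sigma \cat{A}})$, verify that this group is abelian, and conclude that the extension is central.

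For the first task, $\Phi$-invertibility together with Remark~\ref{remallH} makes $\PhiA(H): \int_\Sigma \cat{A} \to \cat{A}^{\otimes \Legs(T)}$ an equivalence in the bicategory of $\int_{\partial\Sigma \times \opint}\cat{A}$-modules. Picking a quasi-inverse, conjugation gives a group isomorphism
$$\Aut_\partial(\PhiA(H)) \xrightarrow{\ \simeq\ } \Aut_\partial(\id_{\int_\Sigma \cat{A}}), \qquad \alpha \longmapsto \PhiA(H)^{-1} \cdot \alpha \cdot \PhiA(H),$$
where the dots denote whiskering of 2-cells by 1-cells. For the second task, $\Aut_\partial(\id_{\int_\Sigma \cat{A}})$ is abelian by a standard Eckmann--Hilton argument: on the endomorphisms of an identity 1-morphism in any bicategory the horizontal and vertical compositions share units and satisfy interchange, hence coincide and are commutative.

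For the third task, conjugation is trivial inside any abelian group, so the inner $\Map(\Sigma)$-action provided by Theorem~\ref{thmdefsurfa2} becomes trivial after transport along the isomorphism above. The sequence is therefore a central extension of $\Map(\Sigma)$ by the abelian group $\Aut_\partial(\id_{\int_\Sigma \cat{A}})$, and the classical bijection between equivalence classes of central extensions and the second cohomology group finishes the proof. The only delicate point — the main obstacle, such as it is — is verifying that the rewriting of the kernel intertwines the two $\Map(\Sigma)$-actions; but since the target is abelian and the source action is a priori by inner automorphisms, the required compatibility is automatic and demands only a formal bookkeeping check.
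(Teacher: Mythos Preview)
Your proof is correct and follows essentially the same approach as the paper's: obtain the short exact sequence with kernel $\Aut_\partial(\PhiA(H))$ from the connectedness of $\Omega_\cat{A}(\Sigma)$ (via Remark~\ref{remconnectedness}), use the invertibility of $\PhiA(H)$ to identify this kernel with $\Aut_\partial(\id_{\int_\Sigma\cat{A}})$, and invoke Eckmann--Hilton for abelianness. Your explicit derivation of centrality from ``abelian plus inner action'' is a nice elaboration that the paper leaves implicit; one cosmetic quibble is that your conjugation formula $\PhiA(H)^{-1}\cdot\alpha\cdot\PhiA(H)$ should really be a one-sided whisker (e.g.\ $\PhiA(H)^{-1}\circ\alpha$ transported through $\PhiA(H)^{-1}\PhiA(H)\cong\id$), but the underlying isomorphism is of course correct.
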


\begin{proof}
	We first note that Theorem~\ref{thmdefsurfa} combined with Remark~\ref{remconnectedness} tells us that we get extensions
	of the form
	\begin{align} 1 \to \Aut_\partial(\PhiA (H)  ) \to \MapA( \Sigma) \to \Map(\Sigma)\to 1\end{align} for $H$ with $\partial H=\Sigma$.
	Additionally, the module equivalence $\PhiA(H):\int_\Sigma \cat{A}\to\cat{A}^{\otimes \Legs(T)}$
	provides 
	us with an isomorphism 	$
	\Aut_\partial(\id_{\int_\Sigma \cat{A}})    \cong           \Aut_\partial(\PhiA(H)) 
	$ of groups. 
	Finally, we observe that $\Aut_\partial(\id_{\int_\Sigma \cat{A}})$ is abelian because it is a subgroup  of $ \Aut(\id_{\int_\Sigma \cat{A}})$ which by the bicategorical Eckmann-Hilton argument is abelian. By Lemma~\ref{lemmatrivcon} the extension is central.
\end{proof}

In the particular case that $\cat{A}$ is cofactorizable, one obtains a modular functor featuring 
extensions of mapping class groups
that are of a particularly nice form.
\begin{proposition}\label{propcof}
	Let $\cat{A}$ be a cofactorizable cyclic framed $E_2$-algebra. Then  the resulting
	extensions of mapping class groups of connected surfaces are of the form
	\begin{align}
		\label{extsesfactorizable}	1 \to \Aut(   \PhiA(\mathbb{B}^3)  ) \to \MapA( \Sigma) \to \Map(\Sigma)\to 1 \ . 
	\end{align}
\end{proposition}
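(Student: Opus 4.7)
The plan is to combine cofactorizability with Proposition~\ref{propfullyfaithful}, first promoting cofactorizability to $\Phi$-invertibility on every connected surface, then canonically identifying the extension term with $\Aut(\PhiA(\mathbb{B}^3))$.

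First I would show that a cofactorizable $\cat{A}$ is $\Phi$-invertible on every connected surface in the sense of Definition~\ref{defphiinv}. By Remark~\ref{remcof}, cofactorizability is precisely the invertibility of $\PhiA(\mathbb{D}^2\times [0,1])$. The compatibility of the $\PhiA$-maps with modular operadic composition, which was established in step~\ref{proofstepphiHglue} of the proof of Theorem~\ref{thmdefsurfa}, then propagates invertibility: every connected handlebody admits an operadic presentation using the two building blocks $\mathbb{D}^2\times [0,1]$ and $\mathbb{B}^3$, and $\PhiA$ of a composite is the composition of the $\PhiA$-maps of the pieces, internally contracted through the cyclic pairing $\kappa$. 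Thus it suffices to verify that $\PhiA(\mathbb{B}^3):\int_{\mathbb{S}^2}\cat{A}\to I$ is itself an equivalence. This follows from excision, which writes $\int_{\mathbb{S}^2}\cat{A}\simeq \cat{A}\otimes_{\int_{\mathbb{S}^1\times\opint}\cat{A}} \cat{A}$, together with the cofactorizability equivalence $\int_{\mathbb{S}^1\times\opint}\cat{A}\simeq \cat{A}\otimes \cat{A}$ and the self-duality pairing of $\cat{A}$, which collapse the relative tensor product to $I$ in such a way that $\PhiA(\mathbb{B}^3)$ is identified with this equivalence.

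Given $\Phi$-invertibility, Proposition~\ref{propfullyfaithful} produces for every connected surface $\Sigma\in\Surf(T)$ the extension
\[1 \to \Aut_\partial(\id_{\int_\Sigma \cat{A}}) \to \MapA(\Sigma) \to \Map(\Sigma)\to 1.\]
To finish, I would identify $\Aut_\partial(\id_{\int_\Sigma \cat{A}})$ canonically with $\Aut(\PhiA(\mathbb{B}^3))$. Choosing any handlebody $H$ with $\partial H=\Sigma$, the module equivalence $\PhiA(H):\int_\Sigma \cat{A}\to \cat{A}^{\otimes \Legs(T)}$ gives by conjugation $\Aut_\partial(\id_{\int_\Sigma \cat{A}})\cong \Aut_\partial(\id_{\cat{A}^{\otimes \Legs(T)}})$. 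Since the latter object arises by tensoring copies of $\cat{A}$ regarded as free modules over the appropriate circle algebras, module automorphisms of its identity reduce canonically to $\Aut(\id_I)$. The same reasoning, applied to $\mathbb{B}^3$ via its equivalence $\PhiA(\mathbb{B}^3)$, gives $\Aut(\PhiA(\mathbb{B}^3))\cong \Aut(\id_I)$, and chaining the identifications yields the claim.

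The main obstacle will be making the identifications in the last paragraph canonical --- independent of the auxiliary choice of $H$ --- and checking that they intertwine the $\Map(\Sigma)$-actions, so that the extension class on the left indeed agrees with the one described through $\Aut(\PhiA(\mathbb{B}^3))$. This should follow from the Eckmann--Hilton argument showing that all these automorphism-of-identity groups are canonically modules over the abelian group $\Aut(\id_I)$, combined with the homotopy fixed point structure of $\PhiA(H)$ under the handlebody group action established in Corollary~\ref{corxi}.
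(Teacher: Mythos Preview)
Your proposal rests on the claim that cofactorizability implies $\Phi$-invertibility on every connected surface, and in particular that $\PhiA(\mathbb{B}^3):\int_{\mathbb{S}^2}\cat{A}\to I$ is an equivalence. This is precisely the condition the paper calls \emph{co-non-degeneracy}, and it is treated as a separate hypothesis, not as a consequence of cofactorizability. Your sketch ``excision plus cofactorizability plus the pairing collapse $\cat{A}\otimes_{\int_{\mathbb{S}^1\times\opint}\cat{A}}\cat{A}$ to $I$'' hides the actual problem: after transporting along the cofactorizability equivalence, what you need is that the map $\bar\kappa:\cat{A}\otimes_R\cat{A}\to I$ induced by $\kappa$ is an equivalence. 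You can deduce from cofactorizability (by gluing two solid cylinders) that $\cat{A}\otimes\bar\kappa\otimes\cat{A}$ is an equivalence, but in a general symmetric monoidal $(2,1)$-category $\smc$ this does not force $\bar\kappa$ itself to be one. So the reduction to Proposition~\ref{propfullyfaithful} does not go through without an extra cancellation hypothesis on $\cat{A}$ or $\smc$.

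The paper's proof sidesteps $\Phi$-invertibility entirely. Instead of showing the objects $\PhiA(H)$ are equivalences, it shows that the \emph{gluing functor} $\Omega_\cat{A}(\Sigma)\to\Omega_\cat{A}(\Sigma')$ is an equivalence whenever $\Sigma'$ is obtained from $\Sigma$ by gluing two boundary components: cofactorizability makes a certain square of $\smc_\partial$-categories commute with three equivalences, forcing the fourth (the gluing map) to be one; restricting to the full subgroupoid $\Omega_\cat{A}$ then gives full faithfulness, and essential surjectivity comes from connectedness (Proposition~\ref{propstatic}). Iterating and combining with insertion of vacua yields $\Omega_\cat{A}(\Sigma)\simeq\Omega_\cat{A}(\mathbb{S}^2)$ for every connected $\Sigma$, hence $\pi_1(\Omega_\cat{A}(\Sigma))\cong\Aut(\PhiA(\mathbb{B}^3))$. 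Crucially this identifies the extension kernel with $\Aut(\PhiA(\mathbb{B}^3))$ \emph{without} ever asserting that $\PhiA(\mathbb{B}^3)$ is invertible --- which is also why your final chain of identifications through $\Aut(\id_I)$ is not available in general.
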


\begin{proof} By 
	Proposition~\ref{propstatic}
	 $\SurfA$ is connected.
	It remains to prove that the resulting extensions of mapping class groups are of the form~\eqref{extsesfactorizable}:
	Let $\Sigma$ be a connected surface with $n+2$ boundary components and $\Sigma'$ the surface obtained by gluing two of these boundary components together. 
	With the 
	map
	$\PhiA( \mathbb{D}^2 \times \opint   ):\int_{\mathbb{S}^1 \times \opint } \cat{A} \to \cat{A}\otimes \cat{A}$
	that is in fact
	a map of
	$\left( \int_{\mathbb{S}^1\times\opint } \cat{A} \right)^{\otimes 2}$-modules by Proposition~\ref{propmodulemap},
	we obtain a square
	
	\footnotesize
	\begin{equation}\label{eqncofsc}
		\begin{tikzcd}
			\smc_\partial\left(  \int_\Sigma\cat{A},\cat{A}^{\otimes (n+2)}   \right) \ar[rrr,"\text{gluing, see proof of Theorem~\ref{thmdefsurfa}}"]\ar[dd,swap,"\simeq \ \text{via pairing $\kappa$}"]	&&&  	\smc_\partial\left(  \int_{\Sigma'}\cat{A},\cat{A}^{\otimes n}   \right) \ar[dd,"\simeq \ \text{via excision}"] \\ \\ 
			\smc_\partial\left(  \int_\Sigma\cat{A}    
			\otimes_{   \left( \int_{\mathbb{S}^1\times\opint } \cat{A} \right)^{\otimes 2} } 
			\cat{A}^{\otimes 2}   ,\cat{A}^{\otimes n}   \right)	           \ar[rrr, swap," \text{$ -\circ \PhiA( \mathbb{D}^2 \times \opint  ) $}       "]  &&&  \smc_\partial\left(  \int_\Sigma\cat{A}    
			\otimes_{   \left( \int_{\mathbb{S}^1\times\opint } \cat{A} \right)^{\otimes 2} } 
			\int_{\mathbb{S}^1 \times \opint } \cat{A}   ,\cat{A}^{\otimes n}   \right)	    \ . 
		\end{tikzcd}
	\end{equation}
	\normalsize
	It is straightforward to observe that it commutes up to natural isomorphism. 
	Since $\cat{A}$ is assumed to be cofactorizable, $\PhiA( \mathbb{D}^2 \times \opint   )$  is
	by definition an equivalence.  
	This implies that the lower horizontal map in the square is an equivalence. But then the upper horizontal map must be an equivalence as well.
	The gluing functor $\Omega_\cat{A}(\Sigma) \to \Omega_{\cat{A}}(\Sigma')$
	is by the construction in the proof of Theorem~\ref{thmdefsurfa} the restriction the 
	upper horizontal map in \eqref{eqncofsc}
	in domain and range. Therefore, it must be fully faithful. Moreover, it is  essentially surjective because both groupoids are connected. This makes
	$\Omega_\cat{A}(\Sigma) \to \Omega_{\cat{A}}(\Sigma')$
	an equivalence
	and proves after iterated application
	$\Omega_\cat{A}(\Sigma_{0,n+2g})\simeq \Omega_\cat{A}(\Sigma_{g,n}) $, where $\Sigma_{g,n}$ denotes the surface with genus $g$ and $n$ boundary components. 
	Thanks to the	insertion of vacua (Proposition~\ref{propinsertionofvacualemma}), we have additionally
	$\Omega_{\cat{A}}(\Sigma_{g,n})\simeq \Omega_\cat{A}(\Sigma_{g,0})$. This gives us
	$	\Omega_\cat{A}(\Sigma)\simeq \Omega_\cat{A}(\mathbb{S}^2) $ for every connected surface $\Sigma$. 
	Thanks to $\Aut(   \PhiA(\mathbb{B}^3)  )=\pi_1(\Omega_\cat{A}(\mathbb{S}^2))$, this proves that the extensions resulting from $\cat{A}$ are of the form~\eqref{extsesfactorizable}. 
\end{proof}



Let $\cat{A}$ be any cyclic framed $E_2$-algebra. The factorization homology for the sphere $\mathbb{S}^2$ 
is equivalent to $\cat{A}\otimes_{\int_{\mathbb{S}^1 \times \opint }\cat{A}} \cat{A}$ thanks to excision.
Under this identification, the map $\PhiA(\mathbb{B}^3):\int_{\mathbb{S}^2}\cat{A}\to I$
is the map   \begin{align} \cat{A}\otimes_{\int_{\mathbb{S}^1 \times \opint }\cat{A}} \cat{A} \label{PhiS2map} \to I\end{align} induced by the pairing $\kappa :\cat{A}\otimes\cat{A}\to I$.
This leads to the following notion that is analogous to cofactorizability:

\begin{definition}
	We call a cyclic framed $E_2$-algebra $\cat{A}$ \emph{co-non-degenerate}
	if it is $\Phi$-invertible on the sphere, i.e.\ if
	the map $\cat{A}\otimes_{\int_{\mathbb{S}^1 \times \opint }\cat{A}} \cat{A} 
	\to I$
	induced by the pairing of $\cat{A}$ is an equivalence.
\end{definition}

\begin{remark}\label{remktimes} 
	If $\cat{A}$ is co-non-degenerate, then $\Aut(   \PhiA(\mathbb{B}^3)  )\cong \Aut(\id_I)$, which for $\cat{S}=\Rex$ is just the group $\k^\times$ of units in the ground field $\k$.
	If the ambient symmetric monoidal $(2,1)$-category $\smc$ has internal homs, then application of $[-,I]$ to~\eqref{PhiS2map} yields the canonical map
	$I\to Z_2(\cat{A})$ from the monoidal unit of $\smc$ to the Müger center of $\cat{A}$  (we recalled the definition on page~\pageref{defmueger}). 
	This follows from the description of the Müger center in \cite[Definition~2.6]{bjss}
	as the internal endomorphisms of $\cat{A}$ as $\int_{\mathbb{S}^1 \times \opint }\cat{A}$-module. 
	This implies: If $\cat{A}$ is co-non-degenerate, then $\cat{A}$ is non-degenerate (i.e.\ $Z_2(\cat{A})$ is trivial).
	The converse holds if $\int_{\mathbb{S}^2}\cat{A}$ is dualizable in $\smc$, which applies for instance to a finite ribbon category. 
	This has an important consequence: A finite ribbon category, seen as cyclic framed $E_2$-algebra in $\Rex$, is co-non-degenerate
	if and only if its braiding is non-degenerate, i.e.\ if its Müger center $Z_2(\cat{A})$ is trivial.
	In this case, we find $\Aut(   \PhiA(\mathbb{B}^3)  )\cong \k^\times$. 
	Hence, the extensions appearing in~\eqref{extsesfactorizable} will really be central extensions by $\k^\times$. 
\end{remark}


\section{Applications and examples\label{secapp}}
This section is devoted to applications of the main result in the case that our ambient symmetric monoidal $(2,1)$-category $\smc$ is given by $\Rex$; this special case was covered in Section~\ref{exseccyclicframede2}.

\subsection{The spaces of conformal blocks of a $\Rex$-valued modular functor\label{subsecconfblock}}
Let $\cat{A}$ be a cyclic framed $E_2$-algebra in $\Rex$. In fact, we will assume that the underlying linear category of $\cat{A}$ is a finite linear category, as defined in Section~\ref{exseccyclicframede2}. This certainly restricts the generality, but for all of the examples that we want to discuss, it will be sufficient. Under these assumptions, $\cat{A}$ is in fact a Grothendieck-Verdier category in $\Rexf$.

\begin{corollary}\label{corconfblocks}
	Let $\cat{A}$ be a ribbon Grothendieck-Verdier in $\Rexf$.
	Then $\cat{A}$ extends in at most one way to a modular functor.
	In that case, the
	space of conformal blocks for the surface of genus $g$ and $n$ boundary components labeled with $X_1,\dots,X_n \in \cat{A}$
	is isomorphic to \begin{align} \label{eqnconfblock} \cat{A}(X_1 \otimes \dots \otimes X_n \otimes \mathbb{A}^{\otimes g},K)^*\ , 
	\end{align} where $K\in\cat{A}$ is the dualizing object and $\mathbb{A}\in\cat{A}$ is the result of applying the monoidal product to the end $\int_{X\in\cat{A}} X \boxtimes DX\in\cat{A}\boxtimes\cat{A}$. 

\begin{pnum}
	\item A sufficient condition\label{corconfblocksi}
	for the existence of the modular functor 
	extension is cofactorizability of $\cat{A}$. 
	\item 
The spaces of conformal blocks are always finite-dimensional.\label{corconfblocksii}
\end{pnum}

	\end{corollary}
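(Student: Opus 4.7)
The plan is to deduce everything from the main classification theorem together with the explicit description of the ansular functor. First, uniqueness of the extension of $\cat{A}$ to a modular functor is immediate from Theorem~\ref{uniquenessthm}: the space of extensions is empty or contractible, so up to equivalence there is at most one. For part~\ref{corconfblocksi}, if $\cat{A}$ is cofactorizable, then Theorem~\ref{thmcofactorizable} produces the extension $\mathfrak{F}_\cat{A}$, and combined with the uniqueness this settles the first bullet point.

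Next, to compute the spaces of conformal blocks, I would use Theorem~\ref{thmfa2}: any extension of $\cat{A}$ to a modular functor is, as a functor on surfaces, given by $\mathfrak{F}_\cat{A}$, whose underlying 1-morphism on a surface $\Sigma$ agrees with $\PhiA(H)\circ \cat{O}_\Sigma$ for any choice of handlebody $H$ with $\partial H=\Sigma$; by Theorem~\ref{thmfhenv}, this equals $\widehat{\cat{A}}(H)$, the value of the ansular functor. Thus the space of conformal blocks on $\Sigma_g^n$ with boundary labels $X_1,\dots,X_n\in\cat{A}$ is obtained by evaluating $\widehat{\cat{A}}(H)$ at $(X_1,\dots,X_n)$, where $H$ is a standard genus $g$ handlebody with $n$ embedded boundary disks. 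To get the explicit formula, I would apply the coend/hom description of $\widehat{\cat{A}}$ from \cite[Corollary~6.3]{mwansular}: for such a handlebody, the value is naturally expressed in terms of iterated multiplication in $\cat{A}$ and $g$ copies of the object $\mathbb{A}=\mu\left(\int_{X\in\cat{A}} X\boxtimes DX\right)$ (one copy per handle), paired via the cyclic structure $\kappa(-,-)=\cat{A}(K,-\otimes-)$ which produces the dualizing object $K$ and the linear dual on the outer hom space. After unwinding the cyclic identifications, this yields
\begin{align}
\cat{A}(X_1\otimes\cdots\otimes X_n\otimes \mathbb{A}^{\otimes g},\,K)^*\ .
\end{align}

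For part~\ref{corconfblocksii}, finite-dimensionality is automatic: since $\cat{A}$ is a finite linear category in the sense of Etingof-Ostrik, all morphism spaces of $\cat{A}$ are finite-dimensional over $\k$, and in particular so is $\cat{A}(X_1\otimes\cdots\otimes X_n\otimes \mathbb{A}^{\otimes g},K)$ (the existence of the end defining $\mathbb{A}$ and the fact that $\mathbb{A}\in\cat{A}$ uses finiteness in an essential way, but both are provided by the finite linear hypothesis).

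The main obstacle I anticipate is the bookkeeping in the coend formula: one has to justify that the ansular functor evaluated on a standard handlebody of genus $g$ with $n$ boundary disks really produces $g$ insertions of $\mathbb{A}$ and to track the cyclic duality correctly so that the dualizing object $K$ appears in the final hom. This is essentially a direct computation using the description of $\widehat{\cat{A}}$ on handlebodies decomposed into pairs of pants and handles, combined with the fact that under the GV pairing a closed handle contributes the coend $\int^{X\in\cat{A}} X\otimes DX$, whose image under $\mu$ is by definition $\mathbb{A}$; no new conceptual input is needed beyond Theorem~\ref{thmfhenv} and the coend formula of \cite{mwansular}.
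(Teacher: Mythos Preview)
Your proposal is correct and follows essentially the same route as the paper: uniqueness from Theorem~\ref{uniquenessthm}, existence in the cofactorizable case from Theorem~\ref{thmcofactorizable}, the block formula from the ansular functor via \cite[Corollary~6.3]{mwansular}, and finite-dimensionality from the formula together with finiteness of $\cat{A}$. The only point the paper adds that you do not mention is that the cited result in \cite{mwansular} is stated in $\Lexf$ and must be dualized to $\Rexf$, which is exactly the step accounting for the end versus coend and the appearance of the linear dual.
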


\begin{proof}
	The uniqueness statement is Theorem~\ref{uniquenessthm}, and the existence in the cofactorizable case, i.e.\ statement~\ref{corconfblocksi}, is~Theorem~\ref{thmcofactorizable}.
The formula  for the vector space~\eqref{eqnconfblock}
 follows immediately from the description of the underlying ansular functor, i.e.\ 
the $\Hbdy$-algebra, see \cite[Corollary~6.3]{mwansular} (note that those results refer to $\Lexf$; we need to dualize them to $\Rexf$).
Statement~\ref{corconfblocksii} follows from~\eqref{eqnconfblock}.\end{proof}

\begin{remark}
We should warn the reader
 that the identification of the space of conformal blocks with the vector space~\eqref{eqnconfblock},
while being very explicit, uses non-canonical choices. This problem is often ignored in the classical literature on modular functors, see \cite[Remark~7.10]{cyclic} for a more detailed comment. 
\end{remark}

\subsection{Modular categories}
Let us now treat the case of modular categories, as defined in Example~\ref{exfiniteribboncategories}:

\begin{corollary}\label{cormodular}
	For any modular category $\cat{A}$, seen as non-cyclic framed $E_2$-algebra in $\Rex$, there is an essentially unique extension to a modular functor, and this modular functor is equivalent to Lyubashenko's modular functor.
	The 2-group of ribbon autoequivalences of $\cat{A}$ acts, up to coherent isomorphism, on this modular functor. 
	\end{corollary}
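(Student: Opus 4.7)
The strategy is to assemble the corollary as a direct consequence of the general classification together with the known homological properties of modular categories. The first step is to equip the modular category $\cat{A}$ with its canonical cyclic framed $E_2$-structure: for a finite ribbon category, the rigid duality together with the ribbon twist provides a self-dual balanced braided algebra structure in $\Rexf$, hence in $\Rex$, as was spelled out in \cite{cyclic}. Thus $\cat{A}$ is naturally a cyclic framed $E_2$-algebra, and the question of extending it to a modular functor is covered by Theorem~\ref{thmproofclassmf}.

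Next I would verify cofactorizability of $\cat{A}$. This is where the modularity hypothesis enters: by the results of \cite{bjss} (stated in our Introduction and used again in Remark~\ref{remktimes}), for finite ribbon categories, cofactorizability is equivalent to the non-degeneracy of the braiding, i.e. to the triviality of the Müger center. Since $\cat{A}$ is modular, this is precisely satisfied. With cofactorizability in hand, Theorem~\ref{thmcofactorizable} applies and produces an essentially unique extension $\mathfrak{F}_\cat{A} \in \moduli$ of $\cat{A}$ to a modular functor.

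For the identification with Lyubashenko's modular functor, I would invoke Theorem~\ref{uniquenessthm} (weak uniqueness of extensions): Lyubashenko's construction in \cite{lyubacmp,lyu,kl} gives \emph{some} extension of the genus zero data of $\cat{A}$ to a modular functor, in the sense of Definition~\ref{defmodularfunctor}, and since the space $|B\Ext(\cat{A})|$ of all such extensions is contractible (non-empty because $\mathfrak{F}_\cat{A}$ exists), any two extensions — in particular $\mathfrak{F}_\cat{A}$ and Lyubashenko's — must be equivalent via an essentially unique zigzag. The last clause about the 2-group action of ribbon autoequivalences is then immediate from Corollary~\ref{corautos}: ribbon autoequivalences of $\cat{A}$ are exactly automorphisms of $\cat{A}$ as a cyclic framed $E_2$-algebra, and these act coherently on any modular functor extending $\cat{A}$.

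The main obstacle is the comparison with Lyubashenko's construction, since it is formulated in a different language (via coends and explicit generators/relations for the mapping class groups) and a priori it is not obvious that it fits into Definition~\ref{defmodularfunctor}. The cleanest way to overcome this is to observe that Lyubashenko's construction produces, on the genus zero part, the same cyclic framed $E_2$-algebra as $\cat{A}$ (with its canonical cyclic structure), and its extensions of mapping class groups are central by $\k^\times$, hence fit into the framework of $\ExtSurf$; no further computation is needed because uniqueness then forces the equivalence. I would not try to match mapping class group representations generator-by-generator here (that finer comparison is deferred to a forthcoming paper, as noted in the Introduction).
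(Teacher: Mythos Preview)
Your approach matches the paper's, but there is one genuine gap. The corollary is phrased for $\cat{A}$ \emph{as a non-cyclic framed $E_2$-algebra}, yet Theorem~\ref{uniquenessthm} and Corollary~\ref{corautos} both require a fixed \emph{cyclic} structure. You endow $\cat{A}$ with its canonical cyclic structure coming from the ribbon data, but you never argue that this is the \emph{only} cyclic structure extending the given balanced braided one. Without that, a priori different cyclic structures could yield inequivalent modular functors, so ``essentially unique extension'' would fail at the non-cyclic level. The paper closes this gap by invoking \cite[Corollary~4.4]{mwcenter}: since the braiding of a modular category is non-degenerate, the balanced Müger center is trivial and the cyclic structure is unique.

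The same issue resurfaces in your treatment of the 2-group action. You assert that ribbon autoequivalences ``are exactly automorphisms of $\cat{A}$ as a cyclic framed $E_2$-algebra'', but Corollary~\ref{corautos} concerns cyclic automorphisms, while ribbon autoequivalences are a priori only non-cyclic (balanced braided monoidal) ones. The paper handles this by observing that ribbon autoequivalences are automatically compatible with the duality, so the forgetful map $\catf{cAUT}_{\framed}(\cat{A})\to\catf{AUT}_{\framed}(\cat{A})$ admits a section (citing \cite[Corollary~4.6]{mwcenter}); composing this section with the equivalence of Corollary~\ref{corautos} gives the desired action. Finally, for the Lyubashenko comparison the paper cites a specific result, \cite[Proposition~7.14]{cyclic}, establishing that Lyubashenko's genus zero data agrees with $\cat{A}$ as a cyclic framed $E_2$-algebra; you correctly identify this as the crux but leave it as an assertion.
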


This proves that for a given modular category the Lyubashenko construction is the essentially unique possible modular functor construction
 in a precise sense. To the best of our knowledge, this was previously not known.

\begin{proof}
Since $\cat{A}$ is a finite ribbon category, it is a cyclic framed $E_2$-algebra in $\Rex$ by Example~\ref{exfiniteribboncategories}.
Moreover, this is the unique cyclic structure by \cite[Corollary~4.4]{mwcenter} because the braiding of $\cat{A}$ is non-degenerate.

	Thanks to \cite[Theorem~1.1]{shimizumodular}, $\cat{A}$ is factorizable and then also cofactorizable by \cite[Theorem~1.6]{bjss}. 
	Now Theorem~\ref{thmcofactorizable} gives us an essentially unique extension to a modular functor.
	The  extensions of mapping class groups are central extensions by the abelian group $\k^\times$ by Remark~\ref{remktimes}.
	By Theorem~\ref{uniquenessthm} this modular functor must agree, up to equivalence with Lyubashenko's modular functor, provided that Lyubashenko's modular functor in genus zero actually agrees with $\cat{A}$ as cyclic framed $E_2$-algebra. This is indeed the case by \cite[Proposition~7.14]{cyclic}.
	
	For the additional statement on the action of the
	2-group $\catf{AUT}_{\framed}(\cat{A})$
	of ribbon autoequivalences of $\cat{A}$
	on the modular functor,
	we observe that ribbon autoequivalences are automatically compatible with the duality, so that they come automatically 
	 with the structure of a cyclic automorphism; in other words, the forgetful map
	$\catf{cAUT}_{\framed}(\cat{A})\to \catf{AUT}_{\framed}(\cat{A})$
	 from cyclic $\framed$-automorphisms
	to non-cyclic ones has a section
	$s:\catf{AUT}_{\framed}(\cat{A})\to \catf{cAUT}_{\framed}(\cat{A})$, see also~\cite[Corollary~4.6]{mwcenter}. 
	Now the action of 
	$\catf{AUT}_{\framed}(\cat{A})$ on the modular functor $\mathfrak{F}_\cat{A}$ is given by
	\begin{align}
		\catf{AUT}_{\framed}(\cat{A})\ra{s}\catf{cAUT}_{\framed}(\cat{A}) \stackrel{\text{Corollary~\ref{corautos}}}{\simeq} \catf{AUT}(\mathfrak{F}_\cat{A}) \ . 
		\end{align}
	\end{proof}

Since, in the semisimple case, the spaces of conformal blocks of this modular functor are exactly the state spaces of the Reshetikhin-Turaev construction, 
Corollary~\ref{cormodular} is in line with~\cite{akz}, where the Reshetikhin-Turaev state spaces of an anomaly-free 
 modular \emph{fusion} category (i.e.\ a semisimple modular category)	 ---
as vector spaces, not as mapping class group representations ---
are produced from factorization homology.

\begin{example}[The Hopf algebra case]
	If a modular category $\cat{A}$ arises as finite-dimensional modules over a ribbon factorizable Hopf algebra $H$, the spaces of conformal blocks $\cat{A}(X_1 \otimes \dots \otimes X_n \otimes \mathbb{A}^{\otimes g},K)^*$ 
	 reduce to the familiar ones $\Hom_H(X_1\otimes\dots\otimes X_n\otimes H_\text{ad}^{\otimes g},k)^*$, where $H_\text{ad}$ is the adjoint representation of $H$ and $k$ is the ground field with $H$-action via the counit. This is dual to the statement in~\cite[Corollary~7.13]{cyclic}.
	\end{example}

	\subsection{Vertex operator algebras}
Using the classification of modular functors we can give a universal construction for spaces of conformal blocks 
from a suitable vertex operator algebra. We refer to~\cite{frenkelbourbaki} for one possible introduction to the topic of vertex operator algebras.
Let $V$ be a $C_2$-cofinite vertex operator algebra
and fix a notion of $V$-module that meets the requirements listed in~\cite[Theorem~2.12]{alsw}, so that the category of $V$-modules becomes a ribbon Grothendieck-Verdier category in $\Rexf$ that we denote by $\Vmod$. 
We will not list these conditions here and refer to
\cite{alsw} for the details, but these conditions are roughly speaking the weakest known conditions that make $\Vmod$ braided monoidal and 
 closed under taking the contragredient representation $X \mapsto X^*$. 

\begin{corollary}[Universal construction of spaces of conformal blocks for a vertex operator algebra]\label{corvoax}
	Let $V$ be a $C_2$-cofinite vertex operator 
	together with a notion of $V$-modules, subject to the conditions just mentioned.
	Then we have the following universal construction procedure for spaces of conformal blocks:
	We define spaces of
	conformal blocks in genus zero  by
	sending an $n$-holed sphere whose boundary 
	components are labeled by $V$-modules $X_1 , \dots , X_n$ 
	to the 
	dual hom space \begin{align}
	\Hom_V (X_1\otimes \dots \otimes X_n , V^*)^*\label{genuszeroblockseqn}\end{align} while the mapping class groups in genus zero 
	(ribbon braid groups)
	act through the ribbon Grothendieck-Verdier structure present on $\Vmod$.
	Then the following statements hold:
	
	\begin{pnum}
		
		\item This definition of spaces of genus zero conformal blocks (understood as cyclic $\Surf_0$-structure on $\Vmod$)
		is the unique possible one that extends the balanced braided structure of $\Vmod$ if
		the Müger center of $\Vmod$ is trivial (for example if $\Vmod$ is modular).\label{voanum1}
		
		\item There is, up to equivalence, at most one extension of the genus zero blocks~\eqref{genuszeroblockseqn}
		to a modular functor. The higher genus blocks are then given by the formula~\eqref{eqnconfblock}.\label{voanum2}
		
		\item The unique extension to a modular functor mentioned under~\ref{voanum2} exists if and only $\Vmod$ is connected. A sufficient condition is cofactorizability 
		(which is satisfied for instance if $\Vmod$ is modular). \label{voanum3}
		
		\item If $\Vmod$ fails to be connected, then the higher genus blocks still exist and are compatible
		with gluing
		 (sometimes called `factorization'). They will still carry a representation of handlebody groups instead of mapping class groups of surfaces.\label{voanum4}
		\end{pnum}
	\end{corollary}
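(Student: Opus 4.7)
The proof is essentially an application of the main results of the paper to the particular cyclic framed $E_2$-structure on $\Vmod$. First, by \cite[Theorem~2.12]{alsw} the category $\Vmod$ is a ribbon Grothendieck-Verdier category in $\Rexf$, hence by the equivalence in Section~\ref{exseccyclicframede2} it carries the structure of a cyclic framed $E_2$-algebra in $\Rex$. Under this equivalence, the cyclic structure is determined by the Grothendieck-Verdier duality, and the genus zero spaces of conformal blocks obtained from evaluating the resulting cyclic algebra on an $n$-holed sphere agree, up to the natural identifications, with $\Hom_V(X_1 \otimes \dots \otimes X_n , V^*)^*$, since the dualizing object of $\Vmod$ is the contragredient $V^*$ of $V$.

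For~\ref{voanum1}, the point is to show that the balanced braided structure of $\Vmod$ admits a unique compatible cyclic structure if the Müger center is trivial. This is exactly \cite[Corollary~4.4]{mwcenter}, applied as in the proof of Corollary~\ref{cormodular}: the ambiguity in extending a (non-cyclic) framed $E_2$-algebra to a cyclic one is controlled by $\catf{PIC}(Z_2^\catf{bal}(\Vmod))$, which is trivial when $Z_2(\Vmod)$ is trivial (in particular when $\Vmod$ is modular). For~\ref{voanum2}, the uniqueness up to equivalence of any extension of this cyclic framed $E_2$-algebra to a modular functor is immediate from Theorem~\ref{uniquenessthm}, and once such an extension exists, the formula~\eqref{eqnconfblock} of Corollary~\ref{corconfblocks} gives the higher genus spaces of conformal blocks.

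For~\ref{voanum3}, the classification Theorem~\ref{thmproofclassmf} tells us that an extension exists if and only if $\Vmod$, viewed as a cyclic framed $E_2$-algebra, is connected in the sense of Definition~\ref{defconnected}. By Theorem~\ref{thmcofactorizable} cofactorizability of $\Vmod$ suffices, and this condition is satisfied when $\Vmod$ is modular thanks to the combination of Shimizu's factorizability result and \cite[Theorem~1.6]{bjss} invoked in the proof of Corollary~\ref{cormodular}. For~\ref{voanum4}, even when $\Vmod$ is not connected we can still apply Theorem~\ref{thmmodext} to obtain the canonical modular extension $\widehat{\Vmod}$, i.e.\ the ansular functor associated with $\Vmod$; its values on handlebodies yield the higher genus conformal blocks, still given by~\eqref{eqnconfblock} via \cite[Corollary~6.3]{mwansular}, and come with coherent representations of handlebody mapping class groups together with the gluing (factorization) compatibility built into any modular $\Hbdy$-algebra.

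The only non-routine input is the identification of the genus zero cyclic structure coming from Grothendieck-Verdier duality with the one prescribed by~\eqref{genuszeroblockseqn}; this reduces to the fact that the dualizing object of $\Vmod$ is $V^*$, so that the cyclic pairing $\kappa : \Vmod \boxtimes \Vmod \to \vect$ is naturally $\Hom_V(-\otimes -, V^*)^*$, matching the claimed genus zero prescription.
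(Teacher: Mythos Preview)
Your proof is correct and follows essentially the same route as the paper's: both start from \cite[Theorem~2.12]{alsw} to get the ribbon Grothendieck--Verdier structure on $\Vmod$, identify the genus zero blocks via the dualizing object $V^*$, and then invoke \cite[Corollary~4.4]{mwcenter} for~\ref{voanum1}, Theorem~\ref{uniquenessthm} and Corollary~\ref{corconfblocks} for~\ref{voanum2}, Theorems~\ref{thmproofclassmf} and~\ref{thmcofactorizable} for~\ref{voanum3}, and the ansular functor from \cite{mwansular} for~\ref{voanum4}. The only cosmetic difference is that the paper cites \cite[Theorem~5.9]{mwansular} directly for~\ref{voanum4}, whereas you unpack this via Theorem~\ref{thmmodext} and \cite[Corollary~6.3]{mwansular}.
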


\begin{proof}
The hypotheses guarantee that $V\text{-mod}$ is a ribbon Grothendieck-Verdier category in $\Rexf$
by \cite[Theorem~2.12]{alsw}
 and hence a cyclic framed $E_2$-algebra in $\Rexf$ and also in $\Rex$
 by the results from~\cite{cyclic} recalled in   Section~\ref{exseccyclicframede2}.
 The genus zero blocks are then given by~\eqref{genuszeroblockseqn}, as follows from Corollary~\ref{corconfblocks}.
 
 If the Müger center of $V\text{-mod}$ is trivial, then so is the balanced Müger center. Therefore, this cyclic structure is the only one by \cite[Corollary~4.4]{mwcenter}. This proves~\ref{voanum1}. 
 
 Moreover, Theorem~\ref{uniquenessthm} and 
 Corollary~\ref{corconfblocks} give us \ref{voanum2} while the Theorems~\ref{thmproofclassmf} and~\ref{thmcofactorizable} prove~\ref{voanum3}.
 Finally, \cite[Theorem~5.9]{mwansular} implies~\ref{voanum4}.
	\end{proof}

\begin{corollary}
	The ansular functor from Corollary~\ref{corvoax}~\ref{voanum4} built from the vertex operator algebra $V$
	extends always, regardless of connectedness of the category of $V$-modules, 
	to a modular $\Surf_{\Vmod}$-algebra that is the best possible approximation to an extension of the cyclic framed $E_2$-algebra
	$\Vmod$
	to a modular functor in the sense that any possibly existing extension of $\Vmod$ to a modular functor  factors essentially uniquely through this modular  $\Surf_{\Vmod}$-algebra structure.
	\end{corollary}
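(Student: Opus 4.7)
The plan is to observe that the entire construction of the pair $(\SurfA,\mathfrak{F}_\cat{A})$ given in Theorems~\ref{thmdefsurfa} and~\ref{thmfa2} proceeds without any connectedness hypothesis on $\cat{A}$: one always obtains a groupoid-valued modular operad $\SurfA$, a map $p_\cat{A}:\SurfA\to\Surf$ together with its section $s_\cat{A}:\Hbdy\to\SurfA$, and a modular $\SurfA$-algebra $\mathfrak{F}_\cat{A}$ whose restriction along $s_\cat{A}$ agrees with the modular extension $\widehat{\cat{A}}$. What the connectedness of the $\Omega_\cat{A}$-groupoids controls is merely whether $p_\cat{A}$ has connected homotopy fibers, i.e.\ whether $\SurfA$ qualifies as an object of $\ExtSurf$ and whether $(\SurfA,\mathfrak{F}_\cat{A})$ qualifies as a modular functor. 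Applied to $\cat{A}=\Vmod$, this already gives the first assertion of the corollary: we have a modular $\SurfA$-algebra $\mathfrak{F}_{\Vmod}$ whose restriction along $s_\cat{A}:\Hbdy\to\SurfA$ recovers the ansular functor produced in Corollary~\ref{corvoax}~\ref{voanum4}.

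For the universality statement, I would revisit the construction of $\psi^\cat{B}:\cat{Q}\to\SurfA$ in the proof of Theorem~\ref{thmuniversal} and verify that it does not use connectedness of $\Omega_\cat{A}$. This is essentially the remark already made in the proof of Lemma~\ref{lemmaconnected1}: given any $\cat{Q}\in\ExtSurf$ and any $\cat{B}\in\Ext(\Vmod;\cat{Q})$, the functors $\Lambda_\cat{B}^-:\cat{Q}_\Sigma\to\smc_\partial(\int_\Sigma\Vmod,\Vmod^{\otimes\Legs(T)})$ still factor through $\Omega_{\Vmod}(\Sigma)$ because the section $s:\partial^{-1}(\Sigma)\to\cat{Q}_\Sigma$ is essentially surjective (a property of $\cat{Q}$ alone) and by construction its image lands in $\Omega_{\Vmod}(\Sigma)$. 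This yields the factorization $\psi^\cat{B}:\cat{Q}\to\SurfA$ as a map of modular operads over $\Surf$, together with a canonical equivalence $(\psi^\cat{B})^*\mathfrak{F}_{\Vmod}\simeq \cat{B}$, and the same equivalence of morphism groupoids as in~\eqref{eqnuniversal} holds.

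The main subtlety, and the part that will require the most care in the write-up, is to correctly formulate `factors through' given that $\SurfA$ need not itself lie in $\ExtSurf$. The natural enlargement is to consider the full subbicategory of the homotopy pullback~\eqref{eqnhomotopypullback} spanned by those objects admitting insertions of vacua, but without requiring connected homotopy fibers. Both $\cat{Q}\in\ExtSurf$ and $\SurfA$ lie in this bicategory, and the argument above shows that $\SurfA$ is still universal for pairs $(\cat{Q},\cat{B})$ with $\cat{Q}\in\ExtSurf$.

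Once the universal property in this enlarged bicategory is established, uniqueness up to a contractible choice is formal and repeats the argument of Theorem~\ref{uniquenessthm}: the bicategory of extensions of $\Vmod$ to a modular functor is equivalent to the slice bicategory $\ExtSurf/\SurfA$ (via $\cat{Q}\mapsto\psi^\cat{B}$), which has a terminal object $\id_{\SurfA}$ as soon as it is non-empty. Consequently, any actual extension of $\Vmod$ to a modular functor factors, uniquely up to a contractible choice, through $(\SurfA,\mathfrak{F}_{\Vmod})$, justifying the interpretation of the latter as the best possible approximation.
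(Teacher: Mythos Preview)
Your proof is correct and follows essentially the same approach as the paper, which simply cites Theorems~\ref{thmfa2} and~\ref{thmuniversal}. The only remark is that your third paragraph about enlarging the ambient bicategory is an unnecessary detour: as you yourself note in the final paragraph, if $\Ext(\Vmod)$ is non-empty then $\Vmod$ is automatically connected (Lemma~\ref{lemmaconnected1}), so $\Surf_{\Vmod}\in\ExtSurf$ and Theorem~\ref{thmuniversal} applies verbatim; in the empty case the factorization statement is vacuous.
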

	
	\begin{proof}
		This is a consequence of 
		Theorems~\ref{thmfa2} and~\ref{thmuniversal}.\end{proof}

\begin{remark}\label{comparisonvoa} 
	For a given vertex operator algebra $V$, one can attempt to build spaces of conformal blocks/a modular functor
	in at least two ways:
	\begin{pnum}
		\item By forming a suitable category of modules over $V$ (this usually involves making some choices) to obtain a nice representation category (which in good cases will be a modular category)
		that can be inserted into the available modular functor constructions \cite{turaev,baki,lyubacmp}.  \label{mfcon1}
		
		\item By performing constructions directly using the vertex operator algebra, see the monograph of Ben-Zvi-Frenkel \cite{frenkelbenzvi} or, as a summary, \cite{frenkelbourbaki}, and additionally the construction of Damiolini-Gibney-Tarasca~\cite{dgt-voa}. \label{bzf2point}
		
	\end{pnum}
	The comparison between these approaches is a major open problem in conformal field theory, see e.g.\ the review article~\cite[Section~3.2 \& 3.3]{frs25}.
	We will not attempt here to describe the construction~\ref{bzf2point} within the framework of this article
	(it is not at all clear that this is in fact possible), but 
	Corollary~\ref{corvoax} provides a potentially powerful comparison tool by introducing
	in a sense a third construction which is doubtlessly closer to~\ref{mfcon1}, but is ultimately just characterized by a universal property and not tied to certain specific construction or even a certain `ansatz' of what the spaces of conformal blocks at a certain genus should look like. 
	If one then exhibits arbitrary constructions of a modular functor, along the lines of~\ref{mfcon1}, \ref{bzf2point} or even something else, such that they all agree in genus zero --- as cyclic framed $E_2$-algebras --- then all these constructions agree with the universal one up to equivalence. 
	 However, let us emphasize again that it is by no means obvious that a construction following~\ref{bzf2point} actually produces a modular functor in the sense of this article. 
	 In fact, even for nice classes of rational vertex operator algebras $V$, the gluing property (`factorization') for the the blocks constructed directly from $V$ is extremely non-trivial and a major and recent result~\cite{damgibtar}.
	 In contrast to that, the blocks appearing in Corollary~\ref{corvoax} always glue correctly.  
\end{remark}

The cofactorizability of $V\catf{-mod}$ is an algebraic condition that one could hope to verify
 in relevant examples, but an in-depth investigation of this point is beyond the scope of this article.
 Nonetheless, we want to give at least one example that goes beyond the situation of
  Corollary~\ref{cormodular}:

\begin{example}[Feigin-Fuchs boson]\label{exfeiginfuchs}
	Let $\Psi=(\mathfrak{h},\spr{-,-},\Lambda,\xi)$ be bosonic lattice data in the sense of \cite[Definition~3.1]{alsw} as recalled in Example~\ref{exvoa}.
	Denote by $\catf{VM}(\Psi)$ the associated ribbon Grothendieck-Verdier category, i.e.\ the category associated to the \emph{Feigin-Fuchs boson}.
	The braiding of this category is non-degenerate. 
	This is a consequence of the fact that the bilinear form $\spr{-,-}$ is assumed to be non-degenerate.
	The underlying braided monoidal category of 
	$\catf{VM}(\Psi)$ is rigid
	(but careful: the rigid duality is generally not the ribbon Grothendieck-Verdier duality of $\catf{VM}(\Psi)$).
	By the \cite[Theorem~1.1]{shimizumodular} and \cite[Theorem~1.6]{bjss}
	this implies
	that $\catf{VM}(\Psi)$ is cofactorizable (recall that this is a property of the underlying $E_2$-algebra).
		Now Corollary~\ref{corvoax}
		 tells us that the ribbon Grothendieck-Verdier 
	category describing the Feigin-Fuchs boson admits a unique
	 extension to a modular functor. We should remark that this category is not modular unless $\xi=0$.
	The ribbon Grothendieck-Verdier category $\catf{VM}(\Psi)$ has a group-cohomological description by \cite[Theorem~3.12]{alsw} (we have recalled this in Example~\ref{exvoa}): 
	$\catf{VM}(\Psi) \simeq \vect_{\Lambda^* / \Lambda}^{\omega(\Psi),2\xi}$.
	The underlying $\Hbdy$-algebra for group-cohomological ribbon Grothendieck-Verdier categories  is given in detail \cite[Example~7.12]{cyclic}. 
	In particular, this allows us to conclude that the space of
	conformal blocks
	 associated to a closed surface of genus $g$ has dimension
	$|\Lambda^* / \Lambda|^g$ if $2(g-1)\xi=0$. Otherwise, it is zero.
	In \cite{alsw} a suggestion of a space of conformal blocks for the Feigin-Fuchs boson
	is already made in the case of the torus, and the question is raised whether a full modular functor can be built. The considerations of this example prove that this is indeed the case. 
\end{example}

\subsection{Drinfeld centers of possibly non-spherical pivotal finite tensor categories}
We conclude with an example built from categories which are neither modular nor necessarily semisimple:
\begin{corollary}\label{corzentrum}
	Let $\cat{A}$ be a pivotal finite tensor category.
	Then the Drinfeld center $Z(\cat{A})$ comes naturally with a cyclic framed $E_2$-structure that admits an essentially  unique extension to a modular functor.
	This is, up to equivalence, the only modular functor whose underlying balanced braided monoidal category is $Z(\cat{A})$ with its usual balanced braided structure.
	\end{corollary}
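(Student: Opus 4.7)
The plan is to reduce the statement to Corollary~\ref{cormodular} applied to $Z(\cat{A})$, with the extra ingredient being that the cyclic structure on $Z(\cat{A})$ is itself determined by the balanced braided structure. I would proceed in four steps.

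First, I would assemble the structure on $Z(\cat{A})$. As the Drinfeld center of a finite tensor category, $Z(\cat{A})$ is again a finite tensor category, and it carries a canonical braiding. The pivotal structure on $\cat{A}$ induces a pivotal structure on $Z(\cat{A})$, which combined with the braiding yields a ribbon structure; concretely, $Z(\cat{A})$ becomes a finite ribbon category, hence by Example~\ref{exfiniteribboncategories} it is a self-dual balanced braided algebra in $\Rex$ via its rigid duality. This produces the claimed cyclic framed $E_2$-structure on $Z(\cat{A})$ whose underlying balanced braided monoidal category is the standard one.

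Second, I would establish that this cyclic structure is the only one refining the given balanced braided structure. Here the key input is that $Z(\cat{A})$ is factorizable --- a classical result on Drinfeld centers of finite tensor categories, valid without any pivotality hypothesis --- so in particular its Müger center is trivial, and so is its balanced Müger center. The classification of cyclic structures on balanced braided categories in \cite{mwcenter} (as already used in the proof of Corollary~\ref{cormodular}) then forces the cyclic refinement of the balanced braided structure on $Z(\cat{A})$ to be unique up to equivalence.

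Third, I would invoke the modular functor machinery. Since $Z(\cat{A})$ is a factorizable finite ribbon category, it is cofactorizable by \cite[Theorem~1.6]{bjss}, and therefore Theorem~\ref{thmcofactorizable} yields an essentially unique extension of this cyclic framed $E_2$-algebra to a modular functor. (Equivalently, $Z(\cat{A})$ is modular in the sense of Example~\ref{exfiniteribboncategories}, so one may apply Corollary~\ref{cormodular} directly.) Combining this uniqueness with the uniqueness of the cyclic structure established in the second step yields the final statement: up to equivalence, there is at most one modular functor whose underlying balanced braided category is $Z(\cat{A})$.

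The substantive step is the second one: everything else is a direct application of the general classification theorem together with the well-known rigidity/factorizability properties of $Z(\cat{A})$. The uniqueness of the cyclic structure is the genuinely new ingredient relative to Corollary~\ref{cormodular}, and it is precisely where the hypothesis of pivotality (as opposed to sphericality) enters only through making $Z(\cat{A})$ ribbon while the triviality of its balanced Müger center still guarantees uniqueness.
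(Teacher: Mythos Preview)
Your first step contains a genuine gap that undermines the reduction to Corollary~\ref{cormodular}. The pivotal structure on $\cat{A}$ does induce a pivotal structure and hence a balancing on $Z(\cat{A})$, but this balancing does \emph{not} satisfy the ribbon condition $\theta_{X^\vee}=\theta_X^\vee$ with respect to the rigid duality unless $\cat{A}$ is spherical. In other words, $Z(\cat{A})$ is in general \emph{not} a finite ribbon category and \emph{not} modular; the paper states this explicitly right after the proof: $Z(\cat{A})$ is modular if and only if $\cat{A}$ is spherical \cite[Corollary~2.13]{mwcenter}. The whole point of Corollary~\ref{corzentrum} is precisely that it covers the non-spherical case, which your reduction to Corollary~\ref{cormodular} throws away.

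The cyclic framed $E_2$-structure that the paper uses is not obtained from the rigid duality. It is the ribbon Grothendieck--Verdier structure constructed in \cite[Theorem~2.12]{mwcenter}, whose dualizing object is the distinguished invertible object of $\cat{A}$ (generally not the unit). With that structure in place, your steps~2 and~3 are essentially the paper's argument: factorizability of $Z(\cat{A})$ \cite[Proposition~4.4]{eno-d} forces the balanced M\"uger center to be trivial, so the cyclic structure is unique by \cite[Corollary~4.4]{mwcenter}; and factorizability implies cofactorizability by \cite[Theorem~1.6]{bjss}, so Theorem~\ref{thmcofactorizable} gives the unique modular functor extension. Only the source of the cyclic structure in step~1 needs to be corrected.
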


\begin{proof}
	By \cite[Theorem~2.12]{mwcenter} we have a cyclic framed $E_2$-structure (equivalently, ribbon Grothendieck-Verdier structure)
	 on $Z(\cat{A})$,
	 namely the one with 
	the distinguished invertible object of $\cat{A}$~\cite{eno-d} as dualizing object of $Z(\cat{A})$. This cyclic framed $E_2$-structure is the only one whose underlying non-cyclic framed $E_2$-structure is the usual balanced braided structure of $Z(\cat{A})$.
	
	Now once again, we use that $Z(\cat{A})$ is not only factorizable by \cite[Proposition~4.4]{eno-d}, but in fact cofactorizable \cite[Theorem~1.6]{bjss} and apply Theorem~\ref{thmcofactorizable}.
	\end{proof}

For the cyclic framed $E_2$-structure on $Z(\cat{A})$ from Corollary~\ref{corzentrum}, we have
\begin{align}
Z(\cat{A}) \ \text{is modular} 
 \quad \Longleftrightarrow \quad \cat{A} \ \text{is spherical} \ , 
\end{align} by \cite[Corollary~2.13]{mwcenter}. If $Z(\cat{A})$ is modular, then the associated 
  modular functor  is 
   a special case of the modular functor associated to a modular category, but Corollary~\ref{corzentrum} does \emph{not} assume that $\cat{A}$ is spherical. 
	We refer to \cite[Section~3]{mwcenter} for a description of the underlying ansular functor and the discussion of non-spherical examples. 
	
	The modular functor for $Z(\cat{A})$, with $\cat{A}$ being a pivotal finite tensor category, should be thought of as a generalization of a modular functor of Turaev-Viro type. Indeed if $\cat{A}$ is a spherical fusion category, it coincides with the Reshetikhin-Turaev modular functor of the modular category $Z(\cat{A})$ by Corollary~\ref{cormodular}, and therefore with the Turaev-Viro type	 modular functor of $\cat{A}$~\cite{tuvi,balsam}. In that case, there is a string-net description~\cite{kirillovsn,fsy}. For non-spherical fusion categories, one would expect a relation to the non-spherical string-nets in \cite{Non-s-string-nets}.
	For a pivotal finite tensor category  $\cat{A}$ 
	that is not necessarily fusion, it is conjectured in \cite[Conjecture~3.5.11]{dsps} and the comments following afterwards
	that $\cat{A}$ gives rise to a three-dimensional combed local field theory
	(a combing on a three-dimensional manifold is a choice of a non-vanishing vector field). 
	Corollary~\ref{corzentrum} provides the underlying modular functor. If one drops the pivotal structure, then at least a framed modular functor is guaranteed through
\cite[Theorem~3.2.2 and~Corollary~3.2.3]{dsps} via the cobordism hypothesis. 
A state sum construction of this framed modular functor is given in \cite{fssstatesum}. 
The modular functor from Corollary~\ref{corzentrum} should admit a description that is an oriented extension of this state sum construction, see also \cite[Remark~5.28]{fssstatesum}.

	\small	

\newcommand{\etalchar}[1]{$^{#1}$}

\end{document}